\newtheorem{theorem}{Theorem}[section]
\newtheorem{lemma}{Lemma}[section]
\newtheorem{corollary}{Corollary}[section]
\newtheorem{remark}{Remark}[section]
\numberwithin{equation}{section}
\numberwithin{figure}{section}
\newcommand{\mres}{\mathbin{\vrule height 1.6ex depth 0pt width
0.13ex\vrule height 0.13ex depth 0pt width 1.3ex}}
\def\XXint#1#2#3{{\setbox0=\hbox{$#1{#2#3}{\int}$}
\vcenter{\hbox{$#2#3$}}\kern-.51\wd0}}
\newcommand{\reff}[1]{{\rm (\ref{#1})}}
\newcommand{\calH}{{\mathcal H}}
\newcommand{\R}{\mathbb{R}}            
\newcommand{\bV}{\mathbf V}            
\newcommand{\bF}{\mathbf F}            
\newcommand{\ve}{\varepsilon}          
\newcommand{\cA}{ {\mathcal A}_0  }
\newcommand{\calA}{ {\mathcal A} }
\newcommand{\disp}{\displaystyle}
\newcommand{\qref}[1]{(\ref{#1})}
\newcommand{\vep}{\varepsilon}
\newcommand{\cH}{{\cal H}}
\newcommand{\rw}{ {\rm{w}} }
\newcommand{\rf}{ {\rm {f}} }
\newcommand{\rp}{ {\rm{p}} }
\newcommand{\Gnj}{{G_{n_j}}}
\newcommand{\Gnjk}{{G_{n_{j_k}}}}
\newcommand{\be}{\begin{eqnarray}}
\newcommand{\ee}{\end{eqnarray}}
\newcommand{\nn}{\nonumber}
\newcommand{\ben}{\begin{eqnarray*}}
\newcommand{\een}{\end{eqnarray*}}
\newcommand{\jl}[1]{\textcolor{red}{[[JL: #1]]}}
\newcommand{\sd}[1]{\textcolor{blue}{[[SD: #1]]}}
\newcommand{\tcb}[1]{\textcolor{blue}{#1}}
\begin{document}

\title{ 
Convergence of Phase-Field Free Energy and Boundary Force for Molecular Solvation} 


\author{
Shibin Dai\thanks{Department of Mathematical Sciences,
New Mexico State University, Las Cruces, NM 88003, USA. 
Email: sdai@nmsu.edu.} 
\and
Bo Li\thanks{Department of Mathematics and Quantitative Biology Graduate Program,
University of California, San Diego, 
9500 Gilman Drive, Mail code: 0112, La Jolla, CA 92093-0112,  USA. Email: bli@math.ucsd.edu.}
\and
Jianfeng Lu
\thanks{Department of Mathematics, Department of Physics, and Department of Chemistry, 
Duke University, Box 90320, Durham, NC 27708-0320, USA.  
Email: jianfeng@math.duke.edu.}
}

\date{June 14, 2016}

\maketitle

\begin{abstract}
We study a phase-field variational model for the solvaiton of charged molecules with 
an implicit solvent.  The solvation free-energy functional of all phase fields 
consists of the surface energy, solute excluded volume and solute-solvent van der Waals 
dispersion energy, and electrostatic free energy. 
The surface energy is defined by the van der Waals--Cahn--Hilliard functional 
with squared gradient and a double-well potential.  
The electrostatic part of free energy is defined through 
the electrostatic potential governed by the Poisson--Boltzmann equation 
in which the dielectric coefficient is defined through the underlying phase field. 
We prove the continuity of the electrostatics---its potential, free
energy, and dielectric boundary force---with respect to the
perturbation of dielectric boundary.  We also prove the
$\Gamma$-convergence of the phase-field free-energy functionals to
their sharp-interface limit, and the equivalence of the convergence of
total free energies to that of all individual parts of free energy.
We finally prove the convergence of phase-field forces to their
sharp-interface limit.  Such forces are defined as the negative first
variations of the free-energy functional; and arise from stress
tensors. In particular, we obtain the force convergence for the van
der Waals--Cahn--Hilliard functionals with minimal assumptions.

\bigskip

\noindent
{\bf Key words and phrases}:
solvation free energy, 
phase field,  
van der Waals--Cahn--Hilliard functional, 
Poisson--Boltzmann equation, 
$\Gamma$-convergence, 
convergence of boundary force.    

\end{abstract}


{\allowdisplaybreaks


\section{Introduction}
\label{s:introduction}

We study the convergence of a phase-field variational model to its
sharp-interface limit for the solvation of charged molecules.  In this
section, we present first the sharp-interface then the phase-field
models of molecular solvation.  We also describe our main results and
discuss their connections to existing studies. 
To ease the presentation, the quantities are only formally defined in this
  section; their precise definitions are given in Section~\ref{s:MainResults}.

\subsection{A Sharp-Interface Variational Model of Solvation}
\label{ss:SharpInterfaceModel}

We denote by $\Omega \subset \R^3$ 
the entire solvation region. It is divided into a solute (e.g., protein) region
$\Omega_{\rm p}$ (p for protein) that contains solute atoms located at 
$x_1, \dots, x_N$, and solvent region $\Omega_{\rm w}$ (w for water), separated by
a solute-solvent (e.g., protein-water) interface $\Gamma$. 
The solute atomic positions $x_1, \dots, x_N$ are given and fixed. 
A solute-solvent interface is treated as a dielectric boundary as it separates the low
dielectric solutes from high dielectric solvent. 
In a variational implicit-solvent model, an optimal solute-solvent interface is defined 
as to 
minimize the solvation free-energy functional of all the possible interfaces $\Gamma \subset \Omega$
that enclose $x_1,\dots, x_N$
\cite{DSM_PRL06, DSM_JCP06, 
Wang_VISMCFA_JCTC12, 
Zhou_VISMPB_JCTC14}: 
\begin{align}
\label{FGamma}
F[\Gamma] & = P_0 \mbox{Vol}\, (\Omega_{\rm p}) +  \gamma_0 \mbox{Area}\, (\Gamma) 
+ \rho_0  \int_{\Omega_{\rm w}} U(x) \, dx + F_{\rm ele}[\Gamma].  
\end{align}
The first term of $F[\Gamma]$ describes the work it takes to create the solute 
region $\Omega_{\rm p}$ in a solvent medium at hydrostatic pressure $P_0,$ 
where $\mbox{Vol}\,(\Omega_{\rm p})$ is the volume of $\Omega_{\rm p}.$ 
The second term is the solute-solvent interfacial energy, where $\gamma_0$ is an effective,
macroscopic surface tension. 
The third term, in which $\rho_0 $ is the constant bulk solvent density, 
is the solute-solvent interaction energy described by a potential $U$ that 
accounts for the solute-excluded volume and solute-solvent van der Waals attraction.   
The interaction potential $U$ is often given by
\[
U(x) = \sum_{i=1}^N U_{\text{LJ}}^{(i)}(|x-x_i|),
\]
where each
\[
U_{\text{LJ}}^{(i)}(r) = 4\varepsilon_i\left[\left(\frac{\sigma_i}{r}\right)^{12}
- \left(\frac{\sigma_i}{r}\right)^6 \right]
\]
is a Lennard-Jones potential with parameters $\varepsilon_i$ of energy and $\sigma_i$ of length. 

The last term is the electrostatic free energy. In the classical Poisson--Boltzmann theory, 
it is defined to be  
\cite{Li_SIMA09,
DavisMcCammon_ChemRev90,
SharpHonig_Rev90,
CDLM_JPCB08, 
AndelmanHandbook95, 
ZhouJCP94,
Zhou_VISMPB_JCTC14}
\begin{equation}
\label{FeleGamma}
F_{\rm ele}[\Gamma]  =  \int_\Omega \left[ -\frac{\ve_\Gamma }{2} |\nabla \psi_{\Gamma}|^2
+ \rho \psi_{\Gamma}  - \chi_{\Omega_{\rm w}}  B(\psi_{\Gamma} )   \right] dx,  
\end{equation}
where $\psi  = \psi_{\Gamma }$ is the electrostatic potential. It 
solves the boundary-value problem of the Poisson--Boltzmann equation
\cite{CDLM_JPCB08, 
AndelmanHandbook95, 
ZhouJCP94,
Zhou_VISMPB_JCTC14}
\begin{align}
\label{PBE}
& \nabla\cdot\varepsilon_\Gamma \nabla\psi - \chi_{\Omega_{\rm w}}  B'(\psi )
  = - \rho \qquad  \text{in } \Omega,
\\
\label{BC}
&\psi  = \psi_\infty \qquad  \mbox{on } \partial \Omega. 
\end{align}
Here, the dielectric coefficient $\ve_\Gamma$ (in the unit of vacuum permittivity) is defined by
$\ve_\Gamma(x) = \ve_{\rm p} $ if $x \in \Omega_{\rm p}$ and 
$\ve_\Gamma(x) = \ve_{\rm w} $ if $x \in \Omega_{\rm w}$, 
where $\ve_{\rm p}$ and $\ve_{\rm w}$ are the dielectric coefficients (relative permittivities)
of the solute and solvent regions, respectively. 
In general, $\ve_{\rm p} \approx 1$ and $\ve_{\rm w} \approx 80.$
The function $\rho: \Omega \to \R$ is the density of solute atomic charges.
It is an approximation of the point charges $\sum_{i=1}^N Q_i \delta_{x_i}$,
where $Q_i$ is the partial charge carried by the $i$th atom at $x_i$ and 
$\delta_{x_i}$ denotes the Dirac mass at $x_i$ $(1 \le i \le N).$
The function $\chi_A$ is the characteristic function of $A.$ 
The function $\psi_\infty: \partial \Omega \to \R$ is a given boundary value of $\psi_\Gamma.$ 
The term $B(\psi_{\Gamma})$ models the ionic effect and the function $B$ is given by 
\[
B(s) = k_{\rm B} T \sum_{j=1}^M c_j^\infty \left( e^{- q_j s/ (k_{\rm B} T) } - 1 \right),
\]
where $k_{\text{B}}$ is the Boltzmann constant and $T$ absolute temperature,
and $c_j^{\infty}$ and $q_j = z_j e$ are the bulk concentration and
charge for the $j$th ionic species, respectively, with $z_j$ the
valence and $e$ elementary charge. Note that $B'' > 0$ on $\R$; so $B$ is strictly convex. 
We assume there are $M$ species of ions in the solvent. 
Moreover, in the bulk, the charge neutrality is reached: 
$
\sum_{j=1}^M q_j c_j^\infty = 0.
$
This implies that $B'(0) = 0,$ and hence $B$ is also minimized at $0.$  


For a smooth dielectric boundary $\Gamma$, we denote by $\nu$ its unit normal
pointing from the solute region $\Omega_{\rm p}$ to the solvent region $\Omega_{\rm w}$. 
We define the normal component of the boundary force (per unit surface area) as the negative variation, 
$-\delta_\Gamma F[\Gamma]: \Gamma \to \R$, of the solvation free energy $F[\Gamma]$ (cf.~\reff{FGamma}). 
It is given by \cite{CDLM_JPCB08,Zhou_VISMPB_JCTC14,LiChengZhang_SIAP11, 
Luo_PCCP12, CXDMCL_JCTC09, ChengChengLi_Nonlinearity11,XiaoLuo_JCP2013} 
\begin{align}
\label{BoundaryForce}
-\delta_{\Gamma}F[\Gamma]  
&=  -P_0 -  2 \gamma_0  H  + \rho_0  U 
- \frac{1}{2} \left( \frac{1}{\ve_{\rm p} } - \frac{1}{\ve_{\rm w} } \right) 
\left( \ve_\Gamma  \frac{\partial \psi_\Gamma }{\partial \nu } \right)^2 
 \nonumber \\
&\quad 
- \frac{1}{2} ( \ve_{\rm w}  - \ve_{\rm p} )  
\left| \nabla_\Gamma \psi_\Gamma \right|^2 -  B (\psi_\Gamma) \qquad \mbox{on } \Gamma, 
\end{align}
where $H$ is the mean curvature, defined as the average of principal curvatures,   
positive if $\Omega_{\rm p}$ is convex,   
$\psi_\Gamma$ is electrostatic potential defined by \reff{PBE} and \reff{BC},  
and $\nabla_\Gamma = (I - \nu \otimes \nu ) \nabla $, with $I$ the identity
matrix, is the surface gradient along $\Gamma$. 

\subsection{A Phase-Field Variational Model of Solvation}
\label{ss:PhaseFieldModel}

To incorporate more detailed physical and chemical properties in the
solute-solvent interfacial region, such as the asymmetry of dielectric
environment, Li and Liu \cite{LiLiu_SIAP15}, and Sun {\it et al.}
\cite{Sun_PFVISM_JCP15} constructed and implemented a related
phase-field model for the solvation of charged molecules (cf.\ also
\cite{LiZhao_SIAP13,PhaseField_JCP13}).  In such a model, a phase
field $\phi: \Omega \to \R,$ a continuous function that takes values
close to $0$ and $1$ in $\Omega$ except in a thin transition layer, is
used to describe the solvation system. The solute and solvent regions
(or phases) are approximated by $\{ \phi \approx 1 \}$ and
$\{ \phi \approx 0 \}$, respectively, and the thin transition layer is
the diffuse solute-solvent interface.  Let $\xi > 0$ be a small
number.  The phase-field solvation free-energy functional of phase
fields $\phi: \Omega\to \R$ is
\cite{PhaseField_JCP13,LiZhao_SIAP13,Sun_PFVISM_JCP15, LiLiu_SIAP15}:
\begin{align}
\label{Fxiphi}
F_\xi [\phi]  & =  P_0 \int_\Omega \phi^2 \, d x 
+ \gamma_0 \int_\Omega  \left[ \frac{\xi }{2} |\nabla\phi|^2 + \frac{1}{\xi } 
W(\phi ) \right] dx + \rho_0  \int_\Omega ( \phi - 1 )^2  U \, dx
+ F_{\rm ele}[\phi], 
\end{align}
where 
\begin{align}
\label{Felephi} 
F_{\rm ele}[\phi] =
\int_\Omega \left[ -\frac{\ve (\phi) }{2} |\nabla \psi_{\phi}|^2 
+ \rho \psi_{\phi} - (\phi-1)^2 B(\psi_{\phi} ) \right] dx, 
\end{align}
and 
 $\psi = \psi_{\phi}$ solves the boundary-value problem of the phase-field Poisson--Boltzmann equation
\begin{align}
\label{PhaseFieldPBE}
& \nabla\cdot\varepsilon(\phi)\nabla\psi - (\phi-1)^2 B'(\psi )
  = - \rho \qquad  \text{in } \Omega,
\\
\label{PhaseFieldBC}
&\psi  = \psi_\infty \qquad  \mbox{on } \partial \Omega.    
\end{align}

All the four terms in \reff{Fxiphi} correspond to those in the sharp-interface
free-energy functional \reff{FGamma}. 
The second integral term, in which 
\begin{equation}
\label{W}
W(\phi) = 18 \phi^2 ( 1 - \phi )^2, 
\end{equation}
is the van der Waals--Cahn--Hilliard functional \cite{vdW1893,Rowlinson_vdWtrans79,CahnHilliard58}
(sometimes called the Allen--Cahn functional \cite{AllenCahn79})
that is known to $\Gamma$-converge to the area of solute-solvent interface as $\xi \to 0$ 
\cite{Modica_ARMA87,Sternberg_ARMA88}. 
The pre-factor $18$ is so chosen that 
\[
\int_0^1 \sqrt{2 W(t)} \, dt = 1. 
\]
In the last term of electrostatic free energy, 
the dielectric coefficient $\ve = \ve(\phi)$ is constructed to be a smooth function, 
taking  the values $\ve_{\rm p} $ and $\ve_{\rm w} $ in the solute 
region $\{ \phi \approx 1 \}$ and solvent region $\{ \phi \approx 0 \}$, respectively
\cite{LiLiu_SIAP15,Sun_PFVISM_JCP15}. 
The first variation of the functional $F_\xi[\phi]$ is given by \cite{LiLiu_SIAP15, Sun_PFVISM_JCP15} 
\begin{align}
\label{deltaphi}
\delta_\phi F_\xi [\phi] &= 2  P_0\, \phi +  \gamma_0
\left[-\xi \Delta\phi+\dfrac{1}{\xi } W'(\phi)\right] +  2\rho_0 (\phi-1)U
\nonumber \\
&\quad - \frac{1}{2} \varepsilon'(\phi) |\nabla\psi_\phi|^2 - 2 (\phi-1) B(\psi_\phi). 
\end{align}

We remark that the van der Waals--Cahn--Hilliard functional in the phase-field model
 \reff{Fxiphi} is exactly the interfacial free energy defined through the macroscopic
component of water density in the Lum-Chandler-Weeks solvation theory \cite{LCW99},
where though the electrostatics is not included.   
It has been recognized that such interfacial free energy is crucial in the description 
of hydrophobic interactions \cite{Chandler05,BerneWeeksZhou_Rev09,LCW99}. 

\subsection{Main Results and Connections to Existing Studies}
\label{ss:MainResults}

In this work, we study the limit properties of the phase-field free-energy functionals
\reff{Fxiphi} in terms of their sharp-interface limit. 
We prove the following: 
\begin{compactenum}
\item[(1)]
The convergence of the phase-field Poisson--Boltzmann electrostatics 
to the corresponding sharp-interface limit. More precisely, if a sequence of phase fields 
converge to a characteristic function of a subset of $\Omega$, 
then the corresponding sequences of electrostatic potentials, 
electrostatic free energies, and forces
converge to their respective sharp-interface counterparts; 
cf.~Theorem~\ref{t:PBenergy} and Theorem~\ref{th:f_ele-conv};   

\item[(2)]
The free-energy convergence. There are two main results concerning such convergence. 
First, the $\Gamma$-convergence of phase-field free-energy functionals to the corresponding
sharp-interface limit;  
cf.\ Theorem~\ref{t:EnergyConvergence}. 
The existence of a global minimizer of the sharp-interface free-energy functional $F$ 
is then a consequence of this $\Gamma$-convergence; cf.\ Corollary~\ref{c:existenceF0}.   
The proof of $\Gamma$-convergence is similar to that for the van der Waals--Cahn--Hilliard
functional. 
Care needs to be taken for the solute-solvent interaction part, 
i.e., the third term in \reff{FGamma} and that in \reff{Fxiphi}. 
In particular, we construct the recovering sequence as the same canonical phase fields
for the van der Waals--Cahn--Hilliard functional   
\cite{Modica_ARMA87,Sternberg_ARMA88}. 
Second, the equivalence of the convergence of total free energies and that 
of the individual parts of free energy (volume, surface, solute-solvent van der Waals interaction, 
and electrostatics); cf.\ Theorem~\ref{t:individual};  


\item[(3)]
The force convergence: if a sequence of phase fields converge to a characteristic function
and the corresponding solvation free energies converge to the sharp-interface free energy, 
then the corresponding phase-field forces converge to their sharp-interface counterpart. 
In fact, each individual part of the force converges to the corresponding sharp-interface part; 
cf.~Theorem~\ref{t:ForceConvSolvation}. 
There are two non-trivial parts in the proof of this force convergence. One is the proof
of electrostatic force convergence, which is Theorem~\ref{th:f_ele-conv}. 
The other is the proof of surface force convergence, i.e., the force convergence for 
the van der Waals--Cahn--Hilliard functional. Due to its general interest, we state and prove
a separate theorem,  Theorem~\ref{th:CH-force-conv},  for the surface force convergence. 
All the different kinds of forces are defined as the 
first variations of the corresponding parts of the free-energy functionals. 
These forces are shown to arise from stress tensors. Our results on force convergence are then 
stated in terms of the weak convergence of corresponding stress tensors.  

\end{compactenum}


Our work is closely related to the analysis in \cite{LiZhao_SIAP13} and \cite{LiLiu_SIAP15}.  
In \cite{LiZhao_SIAP13}, Li and Zhao study a similar but simpler phase-field 
model in which the electrostatic free energy is described by the Coulomb-field approximation 
 \cite{ChengChengLi_Nonlinearity11,Wang_VISMCFA_JCTC12},  
without the need of solving a dielectric Poisson or Poisson--Boltzmann equation. 
They obtain the $\Gamma$-convergence of the phase-field free-energy functionals to 
the respective sharp-interface functional. They also prove the existence of a global minimizer
of the sharp-interface free-energy functional. 
In \cite{LiLiu_SIAP15}, the authors obtain  the well-posedness of the
phase-field Poisson--Boltzmann equation and derive the variation \reff{deltaphi}. 
Using the matched asymptotic analysis, they also show 
that, in the sharp-interface limit as $\xi \to 0$,  the relaxation dynamics 
$ \phi_t = - \delta_\phi F_\xi [\phi]$ approaches that of the sharp-interface
governed by $v_n = -\delta_\Gamma F[\Gamma]$, where $v_n$ is the normal velocity of the sharp
boundary.  We shall use some of the results on the Poisson--Boltzmann 
electrostatics obtained in \cite{LiLiu_SIAP15}. 


We remark that the force convergence for (a subsequence of) 
van der Waals--Cahn--Hilliard functionals is proved in \cite{RogerSchatzle06}
under the assumption that corresponding sequence of free energy is bounded and that 
\begin{equation}
\label{phasefieldH2}
\sup_{0 < \xi \ll 1}  \int_\Omega \frac{1}{\xi}
\left[ - \xi \Delta \phi_\xi  +  \frac{1}{\xi} W'(\phi_\xi) \right]^2 dx < \infty,  
\end{equation}
where $\phi_\xi $ $(0 < \xi \ll 1)$ is the underlying family of phase fields; 
cf.\ also \cite{Sato_IndianaUnivJ08, Ilmanen1993, PadillaTonegawa_CPAM98,
HutchinsonTonegawa_2000, MizunoTonegawa_SIAM15, RogerSchatzle06} and the references therein. 
These assumptions provide additional regularities that 
allow one to show the equi-partition of the free energy, 
the existence of variation of the varifold corresponding to the limit of Radon measures 
\[
\left[ \frac{\xi}{2} |\nabla \phi_\xi |^2 + \frac{1}{\xi}  W(\phi_\xi) \right] dx,  
\]
and the rectifiability of the varifold. 
Here, we only assume the convergence of phase fields to a characteristic function and 
the corresponding convergence of the van der Waals--Cahn--Hilliard free energies 
to that of the sharp-interface counterpart, i.e., the perimeter of the limit set. 
The free-energy convergence is a natural assumption as the
free energies can converge to a different number even if the sequence of phase fields converge
to the same limit characteristic function; see an example constructed in 
Subsection~\ref{ss:Force}.  Our proof of force convergence involves no varifolds. 
It is rather based on the observation that
the free-energy convergence implies the asymptotic equi-partition of energy, and that 
the gradients of phase fields are controlled asymptotically by their projections onto 
the direction normal to the limit interface. 
Note that, without the additional assumption \qref{phasefieldH2}, 
we do not have the necessary regularities, and 
in turn we have to define the limit force in a weak sense through stress tensors. 
Consequently, the force convergence is proved as the weak convergence of stress tensors. 
\subsection{Organization of the Rest of Paper}
\label{ss:Organization}

In Section~\ref{s:MainResults}, we state our assumptions and main theorems. 
We also define forces and their corresponding stresses. 
In Section~\ref{s:PB}, we present results on the Poisson--Boltzmann electrostatics. 
These include a unified result on the well-posedness of the Poisson--Boltzmann equation, 
the continuity of the electrostatic free energy with respect to 
the change of dielectric regions, and the convergence of phase-field dielectric boundary
force to the sharp-interface limit.  In Section~\ref{s:FreeEnergyConvergence}, 
we prove the $\Gamma$-convergence of the phase-field free-energy functionals to their 
sharp-interface limit.  We also prove that the convergence of total free energies is equivalent
to that of individual parts of free energy.  Finally, in Section~\ref{s:ForceConvergenceSolvation}, 
we first prove the convergence of all the individual and
total phase-field forces to their sharp-interface counterparts  
for the solvation free-energy functional, except the surface force.  
We then focus on the proof of such surface 
that corresponds to the van der Waals--Cahn--Hilliard functional for 
a general $n$-dimensional space with $n \ge 2.$ 


\section{Main Theorems}
\label{s:MainResults}

\subsection{Assumptions}
\label{ss:Assumptions}


Unless otherwise stated, we assume the following throughout the rest of paper: 
\begin{compactenum}
\item[(A1)]
The set $\Omega \subset\R^3$ is nonempty, open, connected, and bounded with 
a $C^2$ boundary $\partial \Omega.$ 
The integer $N \ge 1$ and all points $x_1, \dots, x_N$ in $\Omega $  are given. 
All  $P_0$, $\gamma_0$, and $\rho_0  $ are positive numbers.  
The functions $\rho  \in H^1(\Omega)\cap L^\infty(\Omega)$ 
and $\psi_\infty \in W^{2,\infty}(\Omega)$ are given;  
\item[(A2)]
The function $U: \R^3 \to \R\cup \{ +\infty \}$ satisfies
\[
 U(x_i) = +\infty \quad \mbox{and} \quad 
 \lim_{x\to x_i} U(x) = +\infty \quad  ( i = 1, \dots, N), 
 \quad \mbox{and} \quad \lim_{x \to \infty} U(x) = 0.  
\]
Restricted onto $\R^3 \setminus \{ x_1, \dots, x_N \}$, $U$ is a $C^1$-function with 
\[
 U_{\rm min}: = \inf \{ U(x): x\in \R^3 \} \in (-\infty, 0].  
\]
Moreover, $U$ is not integrable in the neighborhood of each $x_i$ $(1\le i \le N)$
in the following sense:  for any measurable subset $\omega \subset \R^3$, 
\[
\int_{\omega}  U \, dx = +\infty  
\quad \mbox{if there exists } i\in\{ 1, \dots, N \} \mbox{ such that } 
 \inf_{r > 0} \frac{| \omega \cap B(x_i, r)|}{r^3} > 0,    
\]
where $|Q|$ denotes the Lebesgue measure of $Q$ in $\R^3;$
(In what follows, measure means the Lebesgue measure, unless otherwise stated.)
\item[(A3)]
The numbers $\ve_{\rm p}$ and $\ve_{\rm w}$ are positive and distinct.
The function $\ve \in C^1(\R)$ 
and it satisfies that $ \ve(\phi) = \ve_{\rm w}$ if $\phi \le 0$, $ \ve(\phi) = \ve_{\rm p}$ 
if $\phi \ge 1$, and $\ve(\phi) $ is monotonic in $(0, 1);$   
(Two examples of such a function $\ve$ are given in \cite{LiLiu_SIAP15}.)
\item[(A4)]
The function $B\in C^2(\R)$ is strictly convex with $B(0) = \min_{s \in \R} B(s) = 0.$
Moreover, $B(\pm \infty) = \infty$ and $B'(\pm \infty) = \pm \infty.$ 

\end{compactenum}

\subsection{Theorems on Free-Energy Convergence}
\label{ss:FreeEnergyConvergence}

We denote
\begin{equation}
\label{A}
\calA = \left\{ u \in H^1(\Omega):  u = \psi_\infty \mbox{ on } \partial \Omega \right\}.
\end{equation}
For any $\phi \in L^4(\Omega),$ we define $E_\phi: \calA \to \R \cup \{ \infty, -\infty \}$ by 
\begin{equation}
\label{Ephiu}
E_\phi [ u ] = \int_\Omega \left[ \frac{\ve (\phi) }{2} |\nabla u |^2
- \rho u  + (\phi-1)^2 B( u )  \right] dx.
\end{equation}
Since $B(u) \ge 0$, $E_\phi [u] > -\infty$ for any $u\in \calA.$
By Theorem~\ref{t:phiPB}, the functional $E_\phi: \calA \to \R \cup \{ +\infty \}$ 
has a unique minimizer $\psi_\phi \in \calA$ that is also the unique weak solution of the corresponding
boundary-value problem of the Poisson--Boltzmann equation:   
\reff{PBE} and \reff{BC} if $\phi$ is the characteristic function of the solute region with
boundary $\Gamma$; and \reff{PhaseFieldPBE} and \reff{PhaseFieldBC} if 
$\phi \in H^1(\Omega)$ is a general phase field. Moreover, in both cases, 
 \[
F_{\rm ele}[\phi] = -E_\phi[\psi_\phi] = -\min_{u\in \calA} E_\phi [u].  
\]
This is exactly the electrostatic free energy $F_{\rm ele}[\Gamma]$ defined in \reff{FeleGamma}
in the sharp-interface setting or $F_{\rm ele}[\phi]$ in \reff{Felephi} in the phase-field setting.  

Let us fix $\xi_0 \in (0, 1).$  We consider the phase-field functionals 
$ F_\xi:  L^1(\Omega)\to \R \cup \{\pm\infty\} $ for all $\xi \in (0, \xi_0]$ 
\cite{LiLiu_SIAP15,Sun_PFVISM_JCP15}:  
\begin{equation}
\label{newFxiphi}
F_\xi [\phi]   =
\left\{
\begin{aligned}
&  P_0 \int_\Omega \phi^2 \, d x 
+ \gamma_0 \int_\Omega  \left[ \frac{\xi }{2} |\nabla\phi|^2 + \frac{1}{\xi } 
W(\phi ) \right] dx + \rho_0   \int_\Omega ( \phi - 1 )^2  U \, dx + F_{\rm ele}[\phi] \\
& \qquad \qquad \, \mbox{if } \phi \in H^1(\Omega),  \\
&  +\infty \qquad  \mbox{otherwise}.  
\end{aligned}
\right.
\end{equation}
Note that $F_\xi$ never takes the value $-\infty$, as $U$ is bounded below and $F_{\rm ele}[\phi]$ 
is finite for any $\phi \in H^1(\Omega).$ 

Let $D$ be a nonempty, bounded, and open subset of $\R^n$ for some $n \ge 2.$ 
We recall that a function $u\in L^1(D) $ has bounded variations in $D$, if 
\begin{equation*}
|\nabla u|_{{BV}(\Omega)} 
:= \sup\left\{\int_{D}  u \, \mbox{div}\, g \, dx
: g\in C_{\rm c}^1(D, \R^n), |g|\le 1 \ \mbox{in } D  \right\} < \infty,
\end{equation*}
where $C_{\rm c}^1(D, \R^n)$ denotes the space of all
$C^1$-mappings from $D$ to $\R^n$ that are compactly supported inside $D$;
cf.\ \cite{Giusti84,Ziemer_Book89,EvansGariepy_Book92}.
If $u \in W^{1, 1}(D)$ then 
$ |\nabla u|_{BV(\Omega)} = \| \nabla u \|_{L^1(D)}$.  The space $BV(D)$ of all $L^1(D)$-functions that
have bounded variations in $D$ is a Banach space with the norm
\begin{align*}
\|u\|_{BV(D)}: = \|u\|_{L^1( D) } + |\nabla u|_{{BV}(D)} \qquad \forall u \in BV(D). 
\end{align*}
For any Lebesgue-measurable subset $A \subseteq \R^n$, the perimeter of $A$ in $D$ is defined by
\cite{Giusti84,Ziemer_Book89,EvansGariepy_Book92}
\[
 P_{D}(A):= |\nabla \chi_A|_{{BV}(D)}. 
\]

We define the sharp-interface free-energy functional 
$F_0: L^1(\Omega) \to \R \cup \{ \infty, -\infty\}$ by 
\begin{equation}
\label{def-F}
F_0[\phi] = \left\{
\begin{aligned}
& 
\displaystyle{
 P_0 |A|+\gamma_0 P_\Omega(A) + \rho_0 \int_{\Omega\setminus A} U \, dx 
+ F_{\rm ele}[\phi]
}
& & 
\quad  \mbox{if } \phi=\chi_A\in BV(\Omega),     &
\\
&  +\infty   & &     \quad \mbox{otherwise}.     & 
\end{aligned}
\right.
\end{equation}
If $\phi = \chi_A \in BV(\Omega),$ where $A\subset\Omega$ is an open subset 
with a smooth boundary $\Gamma$ and the closure $\overline{A} \subset \Omega$, 
then $F_0[\phi] = F[\Gamma]$ as defined in \reff{FGamma}. 
Note that the functional $F_0$ never takes the value $-\infty.$ 

We use the notation $\xi_k \searrow 0$ to indicate that $\{ \xi_k \}$
is a sequence of real numbers such that $\xi_1 > \xi_2 > \cdots$ and
$\xi_k \to 0$ as $k \to \infty.$ We always assume that $\xi_1 \in (0,\xi_0]$.
The following theorem on free-energy convergence is proved in 
Section~\ref{s:FreeEnergyConvergence}:  

\begin{theorem}[$\Gamma$-convergence of free-energy functionals]
\label{t:EnergyConvergence}
For any sequence 
$\xi_k \searrow 0$, the sequence of functionals $F_{\xi_k}: L^1(\Omega)\to \R \cup \{ +\infty \}$ 
$(k = 1, 2, \dots)$
$\Gamma$-converges to the functional $F_0: L^1(\Omega) \to \R \cup \{ + \infty \}$ 
with respect to the $L^1(\Omega)$-convergence. 
This means precisely that the following two properties hold true: 
\begin{compactenum}
\item[\rm (1)]
{\rm The liminf condition.} 
If $\phi_k \to \phi$ in $L^1(\Omega)$ then 
\begin{equation}
\label{liminf-ineq}
\liminf_{k \to \infty} F_{\xi_k}[\phi_k] \ge F_0[\phi]; 
\end{equation}
\item[\rm (2)]
{\rm The recovering sequence.} 
For any $\phi \in L^1(\phi)$, there exist $\phi_k \in L^1(\Omega)$ $(k = 1, 2, \dots)$ such 
that $\phi_k \to \phi$ in $L^1(\Omega)$ and 
\begin{equation}
\label{limsup-ineq}
\limsup_{k \to \infty} F_{\xi_k}[\phi_k] \le F_0[\phi]. 
\end{equation}
\end{compactenum}
\end{theorem}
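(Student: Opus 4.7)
The functional $F_\xi$ is a sum of four pieces---volume, surface (Cahn--Hilliard), solute--solvent, and electrostatic---and the strategy is to establish the $\Gamma$-convergence piece by piece. The surface part is the classical van der Waals--Cahn--Hilliard functional whose $\Gamma$-limit is $\gamma_0 P_\Omega(A)$ by Modica--Sternberg. The volume part is continuous on functions valued in $[0,1]$. The electrostatic part converges along suitable sequences by Theorem~\ref{t:PBenergy}. The solute--solvent part is the delicate one, because the Lennard--Jones-type potential $U$ has nonintegrable singularities at each atomic center $x_i$.

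\emph{Liminf inequality.} Let $\phi_k\to\phi$ in $L^1(\Omega)$ with $\liminf_k F_{\xi_k}[\phi_k]<\infty$, and pass to a subsequence realizing the liminf along which $\phi_k\to\phi$ a.e. The bound $\int_\Omega W(\phi_k)/\xi_k\,dx\le C$ combined with the quartic growth of $W$ outside $[0,1]$ forces $|\{\phi_k\notin[0,1]\}|\to0$; after truncating to $\tilde\phi_k:=\min(\max(\phi_k,0),1)\in[0,1]$ the first three terms of $F_{\xi_k}$ do not increase, while the electrostatic term changes by a vanishing amount because $\tilde\phi_k\to\phi$ in $L^1$ as well and Theorem~\ref{t:PBenergy} yields $F_{\rm ele}[\tilde\phi_k],F_{\rm ele}[\phi_k]\to F_{\rm ele}[\chi_A]$. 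We may therefore assume $\phi_k\in[0,1]$. Modica's theorem then forces $\phi=\chi_A$ with $A\in BV(\Omega)$ and delivers the liminf for the surface term (with constant $\gamma_0$ since $\int_0^1\!\sqrt{2W}\,dt=1$). Fatou's lemma handles the volume term: $\liminf_k P_0\int\phi_k^2\,dx\ge P_0|A|$. For the solute--solvent term write $U=U_+-U_-$ with $0\le U_-\le|U_{\rm min}|$ bounded by (A2); Fatou applied to the nonnegative integrand $(\phi_k-1)^2U_+$ gives its liminf, and dominated convergence (now that $\phi_k\in[0,1]$) handles the bounded $U_-$ contribution. The electrostatic part converges by Theorem~\ref{t:PBenergy}.

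\emph{Limsup (recovery sequence).} The only nontrivial case is $\phi=\chi_A$ with $F_0[\chi_A]<\infty$: if $\phi$ is not a characteristic function of a $BV$ set, or if $F_0[\chi_A]=+\infty$, either any approximating sequence or the constant sequence $\phi_k\equiv\chi_A$ works. Finiteness of $\int_{\Omega\setminus A}U\,dx$ together with assumption (A2) forces each $x_i$ to lie in the measure-theoretic interior of $A$, and a standard density argument reduces us to the case that $\overline A\subset\Omega$ has $C^2$ boundary with $\mathrm{dist}(x_i,\partial A)\ge\delta>0$ (the four pieces of $F_0$ are all continuous under such approximation---the perimeter by construction, the volume and solute--solvent integrals by dominated convergence off the $x_i$, and the electrostatic piece again by Theorem~\ref{t:PBenergy}---so a diagonal extraction recovers the general case). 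With $A$ smooth we use the classical Modica--Sternberg profile $\phi_k(x)=q(d_A(x)/\xi_k)$, where $d_A$ is the signed distance to $\partial A$ (positive inside $A$) and $q:\R\to[0,1]$ solves $q'=\sqrt{2W(q)}$ with $q(-\infty)=0$, $q(+\infty)=1$. The co-area formula yields the surface-energy limit $\gamma_0 P_\Omega(A)$, the volume term converges by dominated convergence, and the solute--solvent integral is split into balls $B(x_i,\delta/2)$---on which $d_A\ge\delta/2$ forces $|\phi_k-1|=O(e^{-c/\xi_k})$, easily dominating the polynomial blowup of $U$---and their complement, where $U$ is bounded and dominated convergence gives $\int_{\Omega\setminus A}U\,dx$. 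The electrostatic term converges by Theorem~\ref{t:PBenergy}.

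\emph{Main obstacle.} The principal difficulty throughout is the non-integrable singularity of $U$ at the atomic centers. In the liminf it forces the truncation argument so that Fatou and dominated convergence apply to $(\phi_k-1)^2 U$ without interference from unbounded excursions of $\phi_k$ near the $x_i$. In the limsup it dictates the preliminary reduction to a smooth $A$ containing each $x_i$ strictly in its interior, after which the exponential convergence of the Modica profile to $1$ beats the polynomial singularity of $U$. All remaining ingredients---Modica--Sternberg for the surface term, dominated convergence for the volume term, and the continuity Theorem~\ref{t:PBenergy} for the electrostatic term---are standard once this is arranged.
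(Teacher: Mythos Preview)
Your liminf argument is essentially sound (though the claim that truncation to $[0,1]$ ``does not increase'' the solute--solvent term is not literally true where $U<0$; the correct statement is that the change vanishes as $k\to\infty$, which follows from $\int W(\phi_k)\le C\xi_k$). The paper avoids truncation altogether by first proving a uniform lower bound (its Lemma~\ref{l:LowerBound}) that yields $\sup_k\|\phi_k\|_{L^4}<\infty$ directly, then using $L^q$ convergence for $q<4$; your route is a legitimate alternative.

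The limsup argument, however, has a genuine gap. Your recovery sequence $\phi_k(x)=q(d_A(x)/\xi_k)$ with $q'=\sqrt{2W(q)}$, $q(\pm\infty)\in\{0,1\}$, is the heteroclinic orbit, which satisfies $0<q(t)<1$ for every finite $t$. Hence $(\phi_k-1)^2$ is \emph{strictly positive everywhere}, in particular on any ball $B(x_i,\delta/2)$ around an atomic center. By assumption (A2), $\int_{B(x_i,r)}U\,dx=+\infty$ for every $r>0$, and since $(\phi_k-1)^2\ge c_k>0$ uniformly on that ball (with $c_k\sim e^{-C/\xi_k}$), you get
\[
\int_{B(x_i,\delta/2)}(\phi_k-1)^2\,U\,dx \;\ge\; c_k\int_{B(x_i,\delta/2)}U_+\,dx \;=\; +\infty
\]
for \emph{every} $k$. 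Thus $F_{\xi_k}[\phi_k]=+\infty$ and your sequence is not a recovery sequence at all. The phrase ``exponential decay easily dominates the polynomial blowup of $U$'' conflates pointwise convergence with integrability: a small positive constant times a nonintegrable function is still nonintegrable.

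The paper's construction (following Modica and Sternberg) uses a modified profile that transitions from $0$ to $1$ over a \emph{finite} interval of length $O(\sqrt{\xi_k})$, so that $\phi_k\equiv 1$ on $\{d_A\ge\sqrt{\xi_k}\}$; see properties \reff{phi_n1}--\reff{phi_n3}. This makes $(\phi_k-1)^2U$ vanish identically near each $x_i$, and the solute--solvent term is then handled by dominated convergence on $\Omega\setminus B(r_0)$ where $U$ is bounded. A secondary point: the ``standard density argument'' reducing to smooth $A$ with $x_i$ strictly interior also requires care---the usual smooth approximation of a finite-perimeter set need not keep prescribed points in its interior. The paper first enlarges $G$ to $G\cup B(\sigma_k)$ and then invokes a tailored approximation lemma (Lemma~\ref{l:setapprox}) guaranteeing the smooth approximants still contain $B(\sigma_k/2)$.
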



We remark that this result does not follow immediately from the stability of $\Gamma$-convergence
under continuous perturbations. 
In fact, the solute-solvent interaction term (i.e., the third term) and the electrostatics
term (i.e., the fourth term) in the phase-field functional \reff{newFxiphi}
are not simple continuous perturbations of the van der Waals--Cahn--Hilliard functionals. 
The convergence of those terms require more than the $L^1(\Omega)$-convergence
of underlying phase-field functions. 

The following corollary of the above theorem provides the existence of 
minimizers of the corresponding sharp-interface free-energy functional: 



\begin{corollary}
\label{c:existenceF0}
There exists a measurable subset $G \subseteq \Omega$ with finite perimeter $P_\Omega(G)$ in $\Omega$
such that $F_0[\chi_G] = \min_{\phi \in L^1(\Omega)} F_0[\phi],$ which is finite.  
\end{corollary}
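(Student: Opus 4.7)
\textbf{Proof plan for Corollary~\ref{c:existenceF0}.}
My plan is to apply the direct method of the calculus of variations, using the $\Gamma$-convergence result of Theorem~\ref{t:EnergyConvergence} together with a $BV$ compactness argument and the continuity of the electrostatic free energy with respect to dielectric boundary perturbations (Theorem~\ref{t:PBenergy}).

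\medskip

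\textbf{Step 1 (finiteness of the infimum).}
I first exhibit one admissible $\phi$ with $F_0[\phi] < \infty$. Let $r > 0$ be small enough that the balls $B(x_i, r)$ are pairwise disjoint and contained in $\Omega$, and set $A = \bigcup_{i=1}^N B(x_i, r)$, $\phi = \chi_A$. Then $|A| < \infty$, $P_\Omega(A) < \infty$, and because $U\in C^1(\R^3\setminus\{x_1,\dots,x_N\})$ with $U(x)\to 0$ at infinity, $\int_{\Omega\setminus A} U\, dx$ is finite. The electrostatic term $F_{\rm ele}[\chi_A]$ is finite by Theorem~\ref{t:phiPB}. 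Hence $m:=\inf_{L^1(\Omega)} F_0 < \infty$, and since $U\ge U_{\min}$ and $F_{\rm ele}[\phi]$ admits a uniform lower bound (the energy is bounded below by standard Poisson--Boltzmann estimates), $m > -\infty$.

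\medskip

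\textbf{Step 2 (minimizing sequence and compactness).}
Choose a minimizing sequence $\{\phi_k\}\subset L^1(\Omega)$ with $F_0[\phi_k]\to m$. Since $F_0[\phi_k] < \infty$ for all large $k$, the definition \reff{def-F} forces $\phi_k = \chi_{A_k}$ for some measurable $A_k\subseteq\Omega$. The bound
\begin{equation*}
\gamma_0 P_\Omega(A_k) \le F_0[\chi_{A_k}] - P_0|A_k| - \rho_0\int_{\Omega\setminus A_k} U\, dx - F_{\rm ele}[\chi_{A_k}] \le C,
\end{equation*}
together with $\|\chi_{A_k}\|_{L^1(\Omega)}\le |\Omega|$ (and the uniform lower bounds on the omitted terms), yields $\sup_k \|\chi_{A_k}\|_{BV(\Omega)} < \infty$. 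By the compact embedding $BV(\Omega)\hookrightarrow\hookrightarrow L^1(\Omega)$, a subsequence (not relabeled) converges in $L^1(\Omega)$ to some $\phi_\infty \in BV(\Omega)$. Since $\chi_{A_k}\in\{0,1\}$ a.e., after passing to a further subsequence yielding a.e.\ convergence, $\phi_\infty = \chi_G$ for a measurable $G\subseteq\Omega$ with $P_\Omega(G) < \infty$.

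\medskip

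\textbf{Step 3 (lower semicontinuity and conclusion).}
I claim $F_0[\chi_G] \le \liminf_{k\to\infty} F_0[\chi_{A_k}] = m$. The volume term is continuous under $L^1$ convergence; the perimeter is lower semicontinuous with respect to $L^1$ convergence by the standard $BV$ theory; the van der Waals term is lower semicontinuous by Fatou's lemma applied to the nonnegative integrands $(U-U_{\min})\chi_{\Omega\setminus A_k}$ combined with the continuity of $|A_k|$; and the electrostatic term satisfies $F_{\rm ele}[\chi_{A_k}] \to F_{\rm ele}[\chi_G]$ by Theorem~\ref{t:PBenergy}. Summing these contributions proves the lower semicontinuity. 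Alternatively, one can invoke the general fact that a $\Gamma$-limit is lower semicontinuous, applied to Theorem~\ref{t:EnergyConvergence}. Either way, $F_0[\chi_G] \le m$, so equality holds and $G$ is the sought minimizer.

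\medskip

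\textbf{Main obstacle.}
The only nontrivial step is the lower semicontinuity of the solute-solvent interaction $\int_{\Omega\setminus A} U\, dx$, because $U$ has nonintegrable singularities at the $x_i$. This is handled by subtracting the constant $U_{\min}$ and applying Fatou to the resulting nonnegative integrand; the a priori finiteness of $\liminf F_0[\chi_{A_k}]$ then forces, via assumption (A2), the measure-theoretic density condition that makes $\int_{\Omega\setminus G} U\, dx < \infty$, guaranteeing $F_0[\chi_G]$ is itself finite.
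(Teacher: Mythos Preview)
Your proof is correct but takes a genuinely different route from the paper. The paper argues via the ``fundamental theorem of $\Gamma$-convergence'': it invokes Lemma~\ref{l:minimizerFxi} to pick minimizers $\phi_k$ of the phase-field functionals $F_{\xi_k}$, uses Lemma~\ref{l:LowerBound} and comparison with $\phi\equiv 1$ to bound the energies, extracts a subsequence $\phi_k\to\chi_G$ in $L^1(\Omega)$ via the usual van der Waals--Cahn--Hilliard compactness, and concludes from Theorem~\ref{t:EnergyConvergence} that $\chi_G$ minimizes $F_0$. You instead apply the direct method to $F_0$ itself: take a minimizing sequence of characteristic functions, get $BV$-compactness from a uniform perimeter bound, and verify sequential lower semicontinuity of $F_0$ term by term (or, more cleanly, invoke the general fact that a $\Gamma$-limit is lower semicontinuous in the convergence topology).

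Your approach is slightly more self-contained in that it does not require Lemma~\ref{l:minimizerFxi} on the existence of phase-field minimizers; the lower semicontinuity you check is essentially the liminf half of Theorem~\ref{t:EnergyConvergence} restricted to characteristic functions. The paper's approach, on the other hand, is the standard packaging once $\Gamma$-convergence and equicoercivity are in hand, and it additionally yields (implicitly) that limits of phase-field minimizers are sharp-interface minimizers. Both arguments rely on Theorem~\ref{t:PBenergy} for the electrostatic term; your handling of the singular van der Waals term via Fatou on $U-U_{\min}$ mirrors what is done inside the proof of the liminf inequality.
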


The next result, also proved in Section~\ref{s:FreeEnergyConvergence}, is of interest by itself.
It states that each component of the
free energy converges to its sharp-interface analog, if the total free energy converges.

 
\begin{theorem}
\label{t:individual}
Let $\xi_k \searrow 0,$ $\phi_k\in H^1(\Omega)$ $(k = 1, 2, \dots),$ 
and $G\subseteq \Omega$ be measurable with $P_\Omega(G) < \infty.$ 
Assume that $\phi_k \to \chi_G $ a.e.\ in $ \Omega $ 
and $F_{\xi_k}[\phi_k] \to F_0 [\chi_G] $ with $F_0[\chi_G]$ finite. Then
\begin{align}
\label{volume}
&\lim_{k\to \infty} \int_\Omega \phi_k^2 dx = |G|, 
\\
\label{surface}
&\lim_{k\to \infty} \int_\Omega \left[ \frac{\xi_k}{2} |\nabla \phi_k|^2 
+ \frac{1}{\xi_k} W(\phi_k) \right] dx = P_\Omega(G),  
\\
\label{LJ}
&\lim_{k\to \infty} \int_\Omega (\phi_k-1)^2 U \, dx = \int_{\Omega\setminus G} U\, dx,  
\\
\label{Ele}
&\lim_{k\to \infty} F_{\rm ele}[\phi_k] = F_{\rm ele}[\chi_G].
\end{align}
All the limits are finite. 
\end{theorem}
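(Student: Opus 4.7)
The plan is to establish a separate liminf inequality for each of the four terms composing $F_{\xi_k}[\phi_k]$, matching its right-hand side to the corresponding sharp-interface piece of $F_0[\chi_G]$, and then exploit the hypothesis that the total converges to the sum of those pieces to promote each liminf to an actual limit via a subsequence argument.

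First I would dispose of (\ref{Ele}) directly by invoking the electrostatic continuity result, Theorem~\ref{t:PBenergy}, which under the a.e.\ convergence $\phi_k\to\chi_G$ (coupled with the $L^p$ bounds extracted next) gives $F_{\rm ele}[\phi_k]\to F_{\rm ele}[\chi_G]$. A bootstrap then bounds the remaining terms. Writing $A_k,\,B_k,\,C_k$ for the volume, surface, and solute--solvent integrals, the elementary bound $\phi^2\le 4+\tfrac{1}{18}W(\phi)$ (valid because $W(\phi)=18\phi^2(1-\phi)^2\ge 18\phi^2$ on $\{\phi\le 0\}\cup\{\phi\ge 2\}$) gives $A_k\le 4|\Omega|+\tfrac{\xi_k B_k}{18}$ and likewise controls $\int(\phi_k-1)^2\,dx$. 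Plugging into $C_k\ge U_{\min}\int(\phi_k-1)^2\,dx$ and rearranging $F_{\xi_k}[\phi_k]=P_0 A_k+\gamma_0 B_k+\rho_0 C_k+D_k$ yields $F_{\xi_k}[\phi_k]\ge(\gamma_0-C'\xi_k)B_k-C''$, so the convergence of the total bounds $B_k$ (and hence $A_k,\,C_k$) uniformly in $k$.

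With uniform bounds in hand, the three liminf inequalities are as follows. (i) $\liminf_k A_k\ge|G|$ is Fatou applied to $\phi_k^2\to\chi_G$ a.e. (ii) $\liminf_k B_k\ge P_\Omega(G)$ follows from the classical Modica--Mortola inequality, once a.e.\ convergence is upgraded to $L^1(\Omega)$: Modica's compactness applied to the bounded $B_k$ extracts an $L^1$-convergent subsequence, whose limit is forced to be $\chi_G$ by a.e.\ uniqueness, and since every subsequence contains such a sub-subsequence, the full $\phi_k\to\chi_G$ in $L^1(\Omega)$. (iii) $\liminf_k C_k\ge\int_{\Omega\setminus G}U\,dx$ is the most delicate---$U$ is unbounded at the $x_i$ and only bounded below by $U_{\min}\le 0$. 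I would truncate $\phi_k$ at level $M\ge 2$, call it $\phi_k^M$; the integrand $(\phi_k^M-1)^2 U$ is then bounded below by the integrable constant $(M+1)^2 U_{\min}$, so Fatou gives $\liminf_k\int(\phi_k^M-1)^2 U\,dx\ge\int_{\Omega\setminus G}U\,dx$. The truncation deficit is supported in $\{|\phi_k|>M\}$, whose contribution is at most $|U_{\min}|\int_{\{|\phi_k|>M\}}(2\phi_k^2+2)\,dx$, vanishing as $k\to\infty$ by the tail estimate $\int_{\{|\phi_k|>M\}}\phi_k^2\,dx\le\tfrac{\xi_k B_k}{18}\to 0$. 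Letting $k\to\infty$ then $M\to\infty$ finishes (iii).

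To upgrade liminfs to limits, take any subsequence of $\{k\}$: by uniform boundedness we may extract a further subsequence along which $A_{k_j}\to a$, $B_{k_j}\to b$, $C_{k_j}\to c$. The liminf inequalities give $a\ge|G|$, $b\ge P_\Omega(G)$, $c\ge\int_{\Omega\setminus G}U\,dx$, while the convergence of $F_{\xi_k}[\phi_k]-D_k$ forces $P_0 a+\gamma_0 b+\rho_0 c=P_0|G|+\gamma_0 P_\Omega(G)+\rho_0\int_{\Omega\setminus G}U\,dx$; equality of a sum with its lower bound sum demands termwise equality, proving that (\ref{volume})--(\ref{LJ}) hold along the subsequence. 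Since this holds for every subsequence, the full sequences converge. The main obstacle is step (iii): direct Fatou fails when $U_{\min}<0$ and $\phi_k$ is not $L^\infty$-bounded, so the truncation-plus-Cahn--Hilliard-tail argument is essential. A secondary delicate point is the initial bootstrap bound on $B_k$, which rests on absorbing a small factor $\xi_k$ to dominate the potentially large negative contribution from $U_{\min}\int(\phi_k-1)^2\,dx$.
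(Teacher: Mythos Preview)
Your proof is correct and follows the same overarching skeleton as the paper---establish liminf inequalities for the individual terms, then use convergence of the total to force equality---but the execution differs in two places worth noting.

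For the volume term~(\ref{volume}) and the negative-$U$ part of the solute--solvent term, the paper bypasses Fatou altogether: once Lemma~\ref{l:LowerBound} gives $\sup_k\|\phi_k\|_{L^4}<\infty$, Lemma~\ref{l:Lq} upgrades a.e.\ convergence to $L^2$-convergence, so $\int\phi_k^2\to|G|$ and $\int_{\{U\le 0\}}(\phi_k-1)^2U\to\int_{\{U\le 0\}}\chi_{\Omega\setminus G}\,U$ are actual limits from the outset. This leaves only two terms---the surface energy and the positive-$U$ part---needing the liminf-to-limit upgrade (done via Lemma~\ref{l:AkBk}). Your route carries three liminfs and upgrades by subsequence extraction, which is equivalent but slightly heavier.

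For the solute--solvent liminf your truncation of $\phi_k$ at level $M$, with the tail controlled by $\int_{\{|\phi_k|>M\}}\phi_k^2\le\xi_kB_k/18$, is a genuinely different device from the paper's sign-splitting of $U$. Both work; the paper's splitting is a bit simpler because it avoids quantifying the tail of $\phi_k$, relying instead on the already-established $L^2$-convergence together with the boundedness of $U$ on $\{U\le 0\}$. Your bootstrap bound on $B_k$ is essentially a hands-on version of Lemma~\ref{l:LowerBound}; to close it you also need a lower bound on $D_k=F_{\rm ele}[\phi_k]$, which follows from $D_k\ge -E_{\phi_k}[\psi_\infty]\ge -C-C'\int(\phi_k-1)^2\,dx$ and is absorbed the same way as the $U_{\min}$ contribution.
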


\subsection{Definition of Force and Theorems on Force Convergence}
\label{ss:Force}


\subsubsection{Force in the Phase-Field Model}

Let $\xi \in (0, \xi_0]$. 
We define the individual forces as vector-valued functions on $\Omega$ as follows:
\begin{align*}
& f_{\rm vol} (\phi) = 2 P_0 \phi \nabla \phi 
& & \mbox{if } \phi \in H^1(\Omega), 
\\
& f_{\xi, {\rm sur}} (\phi) = \gamma_0\left[ -\xi \Delta \phi  
+ \frac{1}{\xi } W'(\phi) \right] \nabla \phi & & \mbox{if } \phi \in H^2(\Omega), 
\\
& f_{\rm vdW}(\phi) = 2 \rho_0 (\phi - 1) U \nabla \phi
& & \mbox{if } \phi \in H^1(\Omega), 
\\
&  f_{\rm ele}(\phi) = \left[ - \frac{\ve'(\phi)}{2}|\nabla \psi_\phi|^2  
  - 2 (\phi - 1)B(\psi_\phi) \right] \nabla \phi  
& & \mbox{if } \phi \in H^1(\Omega), 
\end{align*}
where $\psi_\phi \in \calA$ is electrostatic potential corresponding to $\phi$, 
i.e., the solution to the boundary-value problem of Poisson--Boltzmann equation
\reff{PhaseFieldPBE} and \reff{PhaseFieldBC}; cf.~Theorem~\ref{t:phiPB}. 
If $\phi \in H^2(\Omega)$, we define the total force 
\begin{equation}
\label{f_xi_total}
f_\xi(\phi) = 
f_{\rm vol} (\phi) + f_{\xi, {\rm sur}} (\phi) + f_{\rm vdW}(\phi) + f_{\rm ele}(\phi). 
\end{equation}
Note that these forces are given as $-\nabla \phi$ multiplied by the negative first variations of the
volume, surface, van der Waals solute-solvent interaction, electrostatics, and the total
free energy, respectively; cf.~\reff{deltaphi}. 
Note also that a phase field $\phi$ of lower free energy is close to the 
characteristic function of solute region. The direction $-\nabla \phi$ then points from 
the solute to solvent region, same as the direction $\nu$ in the sharp-interface 
force \reff{BoundaryForce}.  



The forces can be also defined by the method of domain variations. 
Given $V\in C_c^1(\Omega, \R^n)$, we define $x = x(t,X)$ with $t \in (-t_0, t_0)$ for 
some $t_0 > 0$ small and $X \in \Omega$ by
$\dot{x} = V(x)$ and $ x(0,X) = X. $  
This defines a family of transformations $T_t: \Omega \to \Omega$ with $T_t(X) =  x (t,X). $
For a smooth phase field $\phi$, these transformations define the perturbations
$\phi\circ T_t$ of $\phi.$  For the phase-field functional $F_\xi $, one then defines naturally 
the force to be $-(d/dt)|_{t=0} F_\xi [\phi \circ T_t]$, 
the negative variation of the phase-field free-energy functional $F_\xi$ at $\phi$
with respect to these perturbations. 
Note that 
\[
T_t(X) = X + t V(X) + o(t)  \quad \mbox{as } t \to 0.
\]
Hence, 
\[
(\phi \circ T_t)(X)  = \phi (X) +  t \nabla \phi (X) \cdot V(X) + o(t) \qquad\mbox{as } t \to 0.
\]
Therefore,  
\[
- \frac{d}{dt}\biggr|_{t=0} F_\xi [\phi \circ T_t] = 
- \frac{d}{dt}\biggr|_{t=0} F_\xi [\phi + t \nabla \phi \cdot V + o(t)]
=  - \delta_\phi F_\xi [\phi] \nabla \phi \cdot V. 
\]
By \reff{f_xi_total}, this differs from $- f_\xi (\phi) \cdot V$ 
only by a sign. 
This sign difference results from our choice of force direction as discussed above.

We now define the corresponding individual 
stress tensors (with respect to the underlying coordinate system) by 
\begin{align}
\label{stressphivol}
& T_{\rm vol}(\phi) = P_0 \phi^2 I && \mbox{if }\phi \in L^4(\Omega), \\
\label{stressphisurf}
&T_{\xi, {\rm sur} } (\phi) =  \gamma_0\left\{ \left[ 
 	\frac{\xi}{2} |\nabla \phi|^2 + \frac{1}{\xi} W(\phi)\right] I- \xi \nabla \phi \otimes \nabla \phi \right\}
	&& \mbox{if }\phi \in H^1(\Omega),\\
\label{stressphivdW}
&T_{\rm vdW}(\phi) = \rho_0 (\phi-1)^2 U I
 && \mbox{if }\phi \in L^4(\Omega), 
\\
\label{stressphielec}
&T_{\rm ele}(\phi) = \ve(\phi) \nabla \psi_\phi \otimes \nabla \psi_\phi 
- \left[   \frac{\ve(\phi)}{2} | \nabla \psi_\phi |^2  + (\phi - 1)^2 B(\psi_\phi) \right] I
&& \mbox{if }\phi \in L^4(\Omega).  
\end{align}
Note that we assume $\phi \in L^4(\Omega)$, as our 
double-well potential $W = W(\phi)$ defined in \reff{W} is a polynomial of degree $4$. 
Moreover, that $\phi\in L^4(\Omega)$ is necessary for 
the term $(\phi-1)^2$ in the functional $F_{\xi}[\phi] $ 
defined in \reff{Fxiphi} and $F_{\rm ele}[\phi] $ defined in \reff{Felephi} to be in $L^2(\Omega)$. 
Note also that we have the Sobolev embedding $H^1(\Omega) \hookrightarrow L^4(\Omega).$ 

We recall that the divergence of a tensor field $T = (T_{ij})$, denoted $\nabla \cdot T$ 
or $\mbox{div}\, T$, is the vector field  with components $\partial_j T_{ij}$ 
$(i = 1, 2, 3)$, if exist.  For a differentiable vector field $V: \Omega \to \R^3$ 
that has components $V_i $ $(i = 1, 2, 3)$, the gradient $\nabla V$ 
is the matrix-valued function with the $(i,j)$-entry $\partial_j V_i.$ 
For any $3 \times 3$ matrices $A$ and $B,$ we define  $A : B = \sum_{i,j=1}^3 A_{ij} B_{ij}.$ 
We also define $|A|$ by $|A|^2 = {\sum_{i,j=1}^3 |A_{ij}|^2}.$ 
It is straightforward to generalize these definition and notation to $\R^n$ 
for any $n \ge 2.$ 

The following lemma indicates that the phase-field forces defined above arise from the corresponding
stress tensors. Moreover, lower regularities of phase field $\phi$ 
are needed to define the stress tensors: 



\begin{lemma}
\label{l:Stress}
We have for almost all points in $\Omega$ that 
\begin{align}
\label{Tfvol}
&f_{\rm vol}(\phi) = \nabla\cdot T_{\rm vol}(\phi) 
& & \mbox{if } \phi \in H^1(\Omega), &
\\
\label{Tfsur}
&f_{\xi, {\rm sur}}(\phi) =\nabla \cdot T_{\xi, {\rm sur}}(\phi) 
&&  \mbox{if } \phi \in H^2(\Omega), &
\\
\label{TfvdW}
&f_{\rm vdW}(\phi) = \nabla \cdot T_{\rm vdW }(\phi) - \rho_0 (\phi-1)^2 \nabla U 
&&  \mbox{if } \phi \in H^1(\Omega), &
\\
\label{Tfele}
&f_{\rm ele}(\phi)=\nabla \cdot T_{\rm ele}(\phi) +\rho \nabla \psi_\phi
&&  \mbox{if } \phi \in W^{1,\infty}(\Omega).  &
\end{align}
Moreover, we have for any $V \in C_c^1(\Omega,\R^3)$ that 
\begin{align}
\label{weak-f_vol}
&  \int_\Omega f_{\rm vol}(\phi) \cdot  V \, dx
=   - \int_\Omega T_{\rm vol} (\phi): \nabla V\, dx 
\qquad  \mbox{if } \phi \in H^1(\Omega), 
\\
\label{weak-f_sur}
&  \int_\Omega  f_{\xi, {\rm sur}}  (\phi) \cdot V \, dx 
=   - \int_\Omega T_{\xi, {\rm sur}}(\phi) : \nabla V\, dx 
\qquad  \mbox{if } \phi \in H^2(\Omega), 
\\
\label{weak-f_vdW}
&  \int_\Omega f_{\rm vdW } (\phi) \cdot  V \, dx 
=  - \int_\Omega \left[ T_{\rm vdW} (\phi): \nabla V
	+ \rho_0  (\phi-1)^2 \nabla U \cdot V\right]   dx
\nonumber \\
& \qquad \qquad  \qquad \qquad \qquad  
 \mbox{if }  \{ x_1, \dots, x_N \} \cap \mbox{\rm supp}\, (V) =  \emptyset 
\quad \mbox{and} \quad \phi \in H^1(\Omega), 
\\
\label{weak-f_ele}
& \int_\Omega f_{\rm ele}(\phi) \cdot  V \, dx 
=  - \int_\Omega \left[ T_{\rm ele}(\phi) :\nabla V  
	-\rho \nabla \psi_\phi \cdot V\right]  \, dx  
\qquad   \mbox{if } \phi \in W^{1, \infty} (\Omega).  
\end{align}
\end{lemma}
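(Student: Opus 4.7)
The plan is to verify each pointwise identity by direct computation using the product rule and, in the electrostatic case, the phase-field Poisson--Boltzmann equation \reff{PhaseFieldPBE}, and then to derive the weak formulations by multiplying by $V \in C_c^1(\Omega,\R^3)$ and integrating by parts, the boundary terms vanishing by the compact support of $V$.

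The first three identities are elementary. For the volume stress, the chain rule in $H^1(\Omega)$ gives $\nabla \cdot(P_0\phi^2 I) = 2P_0 \phi \nabla \phi = f_{\rm vol}(\phi)$. For the van der Waals stress, the product rule gives $\nabla[\rho_0(\phi-1)^2 U] = 2\rho_0(\phi-1) U \nabla \phi + \rho_0 (\phi-1)^2 \nabla U$, which rearranges to \reff{TfvdW}; the condition $\{x_1,\dots,x_N\}\cap \operatorname{supp}(V)=\emptyset$ in the weak form is needed so that $\nabla U$ is bounded on $\operatorname{supp}(V)$. For the surface stress, expanding $\nabla \cdot T_{\xi,{\rm sur}}(\phi)$ component by component, the key cancellation
$$ \xi\sum_k (\partial_k \phi)(\partial_i \partial_k \phi) = \tfrac{\xi}{2}\,\partial_i|\nabla \phi|^2 $$
annihilates the contributions from the isotropic and outer-product pieces, leaving $\gamma_0[-\xi \Delta \phi + \xi^{-1}W'(\phi)]\partial_i \phi = f_{\xi,{\rm sur}}(\phi)_i$; the hypothesis $\phi \in H^2(\Omega)$ is required to give $\Delta \phi$ pointwise a.e.

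The electrostatic identity \reff{Tfele} is the main step. Expanding $\nabla \cdot T_{\rm ele}(\phi)$ in components and using the analogous cancellation $\sum_j (\partial_j \psi_\phi)(\partial_i \partial_j \psi_\phi) = \tfrac{1}{2}\partial_i|\nabla \psi_\phi|^2$ to remove the quadratic-in-$\nabla\psi_\phi$ terms coming from the outer-product and isotropic pieces, one arrives at
$$ \bigl[\ve'(\phi)\nabla \phi \cdot \nabla \psi_\phi + \ve(\phi)\Delta \psi_\phi - (\phi-1)^2 B'(\psi_\phi)\bigr]\partial_i \psi_\phi - \tfrac{1}{2}\ve'(\phi)|\nabla \psi_\phi|^2 \partial_i \phi - 2(\phi-1)B(\psi_\phi)\partial_i \phi. $$
The bracketed coefficient of $\partial_i \psi_\phi$ is exactly $\nabla \cdot(\ve(\phi)\nabla \psi_\phi) - (\phi-1)^2 B'(\psi_\phi)$, which equals $-\rho$ by \reff{PhaseFieldPBE}, so the whole expression collapses to $-\rho\,\partial_i \psi_\phi + f_{\rm ele}(\phi)_i$, proving \reff{Tfele}. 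The assumption $\phi \in W^{1,\infty}(\Omega)$ is used here to ensure, via elliptic regularity applied to \reff{PhaseFieldPBE} (cf.\ Theorem~\ref{t:phiPB}), that $\psi_\phi \in H^2(\Omega)$, which justifies the pointwise manipulation of the second derivatives.

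For the weak formulations \reff{weak-f_vol}--\reff{weak-f_ele}, testing each strong identity against $V \in C_c^1(\Omega,\R^3)$ and integrating by parts is immediate once integrability is checked. Each stress tensor is locally integrable under the stated hypotheses: for $T_{\rm vol}$, $T_{\rm vdW}$, and the zeroth-order part of $T_{\rm ele}$ one invokes the Sobolev embedding $H^1(\Omega) \hookrightarrow L^4(\Omega)$ in three dimensions to handle the $\phi^2$ and $(\phi-1)^2$ factors; for $T_{\xi,{\rm sur}}$, integrability is immediate from $\phi \in H^1(\Omega)$; and for the remaining parts of $T_{\rm ele}$, from $\phi \in W^{1,\infty}(\Omega)$ together with $\nabla \psi_\phi \in L^2(\Omega)$ given by Theorem~\ref{t:phiPB}. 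The main obstacle in the whole lemma is the electrostatic identity, where one must arrange the differentiated products so that the Poisson--Boltzmann equation can be invoked to eliminate $\Delta \psi_\phi$ in exactly the right combination.
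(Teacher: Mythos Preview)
Your proposal is correct and follows essentially the same approach as the paper: direct componentwise computation for \reff{Tfvol}--\reff{Tfele}, with the key cancellation of mixed second derivatives in the surface and electrostatic cases and the use of the Poisson--Boltzmann equation \reff{PhaseFieldPBE} to replace the bracketed coefficient by $-\rho$, followed by integration by parts for the weak forms. The only minor imprecision is that Theorem~\ref{t:phiPB} itself does not assert $\psi_\phi \in H^2(\Omega)$; the paper obtains this by rewriting \reff{PhaseFieldPBE} as $\ve(\phi)\Delta\psi_\phi = -\rho - \ve'(\phi)\nabla\phi\cdot\nabla\psi_\phi + (\phi-1)^2 B'(\psi_\phi) \in L^2(\Omega)$ under the $W^{1,\infty}$ assumption, which is exactly the elliptic regularity step you invoke.
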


\begin{proof}
The identities \reff{Tfvol} and \reff{TfvdW} follow from direct calculations. 
All the identities \reff{weak-f_vol}--\reff{weak-f_ele} follow from \reff{Tfvol}--\reff{Tfele} 
and integration by parts.  Therefore, it remains only prove \reff{Tfsur} and \reff{Tfele}. 

Let $\phi\in H^2(\Omega)$ and $i \in \{ 1, 2, 3\}.$  We have 
by the definition of $T_{\xi, {\rm sur}}(\phi)$ and using the summation convention that 
\begin{align*}
\partial_j T_{\xi, {\rm sur}, ij}(\phi)
&= \gamma_0 \partial_j \left\{ 
\left[ \frac{\xi}{2} \partial_k \phi \partial_k \phi + \frac{1}{\xi } W(\phi)\right] \delta_{ij}
 - \xi \partial_i \phi \partial_j \phi
\right\} \\
& =\gamma_0\left\{ \xi \partial_{ik} \phi \partial_k \phi + \frac{1}{\xi} W'(\phi) \partial_i \phi 
-\xi \partial_{ij}\phi \partial_j \phi - \xi \partial_i \phi \Delta \phi \right\}\\
& =\gamma_0 \left[ - \xi \Delta \phi + \frac{1}{\xi} W'(\phi) \right] \partial_i \phi,
\end{align*}
where $\delta_{ij}  = 1 $ if $i = j$ and $0$ otherwise. 
This is the $i$th component of the force vector $f_{\xi, {\rm sur}}$; \reff{Tfsur} is thus proved. 

Now let $\phi\in W^{1,\infty}(\Omega).$ By Theorem~\ref{t:phiPB},
$\psi_\phi$ is bounded on $\chi_{\{ \phi \ne 1\}}$. Since $\phi \in W^{1,\infty}(\Omega),$ we have  
\[
\ve(\phi) \Delta \psi_\phi
 = -\rho - \ve'(\phi) \nabla  \phi \cdot \nabla \psi_\phi 
+ (\phi-1)^2 B'(\psi_\phi) \in L^2(\Omega).  
\]
Hence $\psi_\phi \in H^2(\Omega).$ 
By direct calculations using the fact that $\psi_\phi$ solves the Poisson--Boltzmann 
equation, we obtain 
\begin{align*}
 \partial_j T_{{\rm ele}, ij} &
= \partial_j ( \ve(\phi) \partial_i \psi_\phi \partial_j \psi_\phi ) - 
\delta_{ij} \partial_j \left[ \frac12 \ve(\phi) \partial_k \psi_\phi \partial_k \psi_\phi
+ (\phi-1)^2 B(\psi_\phi) \right] \\
	& = \ve'(\phi) \partial_j \phi \partial_i \psi_\phi \partial_j \psi_\phi
		+ \ve(\phi) \partial_{ij} \psi_\phi \partial_j \psi_\phi 
		+\ve(\phi) \partial_i \psi_\phi \Delta \psi_\phi \\
	&\qquad - \frac12 \ve'(\phi) \partial_i \phi |\partial \psi_\phi|^2 -
\ve(\phi) \partial_{ik} \psi_\phi \partial_k \psi_\phi  - 2 (\phi-1) \partial_i \phi B(\psi_\phi) 
- (\phi-1)^2 B'(\psi_\phi) \partial_i \psi_\phi \\
& = \left[  \nabla \cdot \ve(\phi) \nabla \psi_\phi 
- (\phi-1)^2 B'(\psi_\phi) \right] \partial_i \psi_\phi -
\left[ \frac{\ve'(\phi)}{2} | \nabla \psi_\phi |^2 + 
2 (\phi - 1 ) B(\psi_\phi) \right] \partial_i \phi
\\
&=- \rho \partial_i \psi_\phi - \left[ \frac{\ve'(\phi)}{2} | \nabla \psi_\phi |^2 
+ 2 (\phi - 1 ) B(\psi_\phi) \right] \partial_i \phi, \qquad  i = 1, 2, 3,
\end{align*}
proving \reff{Tfele}. 
\end{proof}

\subsubsection{Force in the Sharp-Interface Model}


Let $G$ be an open subset of $\Omega$ such that the closure $\overline{G} \subset \Omega$, 
the boundary $\partial G$ is $C^2,$ and $x_i \in G$ $(i = 1, \dots, N).$ 
Denote by $\nu$ the unit vector on $\partial G$ that points from $G$ to $G^c = \Omega \setminus G.$  
Following \reff{FGamma} (with $\Gamma = \partial G$) or \reff{def-F} (with $A = G$), 
 and \reff{BoundaryForce} (with $\Gamma = \partial G$),  
we define the individual volume, surface, van der Waals, and electrostatic forces on 
the boundary $\partial G$ as vector-valued functions on $\partial G$ as follows:
\begin{align}
	& f_{0,{\rm vol}}[\partial G] = -P_0 \nu,  \label{f0_vol}\\
	& f_{0, {\rm sur} } [\partial G]=  - 2\gamma_0 H\nu, \label{f0_sur} \\
	& f_{0,{\rm vdW}}[\partial G] =\rho_0U\nu, \label{f0_vdW} \\
& f_{0, {\rm ele}}[\partial G] 
= \left[ - \frac{1} {2  } \left( \frac{1}{\ve_{\rm p}} - \frac{1}{\ve_{\rm w}} \right)
 \left|\ve (\chi_G) \nabla  \psi_{\chi_G}  \cdot \nu  \right|^2 \right.
\nonumber \\
&\qquad \qquad \quad \left.  - \frac12 (\ve_{\rm w} - \ve_{\rm p}) \left| (I - \nu \otimes \nu ) 
\nabla \psi_{\chi_G} \right|^2 - B(\psi_{\chi_G}) \right] \nu.   \label{f0_ele}
\end{align}
We also define the total boundary force to be 
\begin{align*}
	f_0[\partial G]&= f_{0,{\rm vol}}[\partial G] + f_{0,{\rm sur}} [\partial G]+f_{0,{\rm vdW}}[\partial G] 
	+f_{0, {\rm ele}}[\partial G].
\end{align*}
In \qref{f0_sur}, $H$ is the mean curvature of $\partial G$, 
defined as the average of the principal curvatures, and is positive if $G$ is convex. 
In \reff{f0_ele}, $\psi_{\chi_G}\in \calA$ is the electrostatic potential
corresponding to $\chi_G$; cf.~Theorem~\ref{t:phiPB}.  
It satisfies $\psi_{\chi_G} |_G \in H^2(G)$ and $\psi_{\chi_G} |_{G^c} \in H^2(G^c)$. 
Moreover (cf.~\cite{Li_SIMA09,LiChengZhang_SIAP11}),    
\begin{align}
\label{psiGp}
& - \ve_{\rm p} \Delta \psi_{\chi_G} = \rho & &  \mbox{in } G, & \\
\label{psiGw}
& - \ve_{\rm w} \Delta \psi_{\chi_G}  + B'(\psi) =  \rho  &&  \mbox{in } G^c, &  \\
\label{psiGcont}
& \psi_{\chi_G} |_{G} = \psi_{\chi_G} |_{G^c} & &  \mbox{on } \partial G, & \\
\label{psiGgrad}
& \ve_{\rm p} \nabla \psi_{\chi_G} |_{G}\cdot \nu  =  
\ve_{\rm w} \nabla \psi_{\chi_G} |_{G^c} \cdot \nu
&& \mbox{on } \partial G.       & 
\end{align} 
The quantity $\ve(\chi_G) \nabla \psi_{\chi_G} \cdot \nu $ in \reff{f0_ele} is 
the common value of both sides of \reff{psiGgrad}.  
By \reff{psiGcont}, the tangential gradient $(I-\nu \otimes \nu)\nabla \psi_{\chi_G} $ 
in \reff{f0_ele} is the same when $\psi_{\chi_G}$ is restricted onto either 
side of the boundary $\partial G.$ 


We recall that the stress tensors $T_{\rm vol}(\chi_G)$, $T_{\rm vdW}(\chi_G)$, 
and $T_{\rm ele}(\chi_G)$ are defined in \reff{stressphivol}, \reff{stressphivdW}, and 
\reff{stressphielec}, respectively, with $\phi $ replaced by $\chi_G.$ 
The following lemma indicates that the forces defined above in 
\reff{f0_vol}--\reff{f0_ele} also arise from stress tensors in the sharp-interface model and that
only lower regularity of the subset $G$ is needed to define the stresses:  


\begin{lemma}  
\label{l:sharpboundaryforce}
Let $G$ be an open subset of $\Omega$ such that the closure $\overline{G} \subset \Omega$ 
and the boundary $\partial G$ is $C^2.$ 
Let $\nu$ denote the unit vector $\nu$ on $\partial G$ that points from $G$ to $G^c.$  
We have for any $V\in C_c^1(\Omega,\R^n)$ that 
\begin{align}
& \int_{\partial G} f_{0, {\rm vol}}[\partial G] \cdot V \, dS
	= -\int_\Omega T_{\rm vol}(\chi_G):\nabla V \,dx, 
\label{weak-f0_vol}\\
& \int_{\partial G} f_{0, {\rm sur}}[\partial G] \cdot V \, dS
= -\gamma_0\int_{\partial G} (I-\nu\otimes\nu):\nabla V\,d S,  \label{weak-f0_sur} \\
& \int_{\partial G} f_{0, {\rm vdW}}[\partial G] \cdot V \, dS
= -\int_{\Omega} \left[  T_{\rm vdW}(\chi_G):\nabla V +\rho_0(1-\chi_G)^2\nabla U\cdot V\right]  \,dx  
\nonumber \\
& \qquad \qquad \qquad \qquad \qquad \qquad \qquad 
 \mbox{if }  \{ x_1, \dots, x_N \} \cap \mbox{\rm supp}\, (V) =  \emptyset, 
		\label{weak-f0_vdW}\\
& \int_{\partial G} f_{0, {\rm ele}}[\partial G] \cdot V \, dS
=- \int_\Omega \left[ T_{\rm ele}(\chi_G) : \nabla V -\rho \nabla \psi_{\chi_G} \cdot V \right] \, dx.  
		\label{weak-f0_ele}
\end{align}
\end{lemma}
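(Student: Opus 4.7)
The first three identities are essentially divergence-theorem calculations, while the fourth identity requires a delicate jump analysis across $\partial G$ using the transmission conditions \eqref{psiGcont}--\eqref{psiGgrad}. I would organize the proof as one short computation per identity, in increasing order of difficulty.

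For \eqref{weak-f0_vol}, I would use $\chi_G^2 = \chi_G$ to get $T_{\rm vol}(\chi_G) = P_0 \chi_G I$, so that $-\int_\Omega T_{\rm vol}(\chi_G) : \nabla V\,dx = -P_0\int_G \mathrm{div}\,V\,dx = -P_0\int_{\partial G} \nu\cdot V\,dS$ by the divergence theorem, which is exactly $\int_{\partial G} f_{0,{\rm vol}}[\partial G]\cdot V\,dS$. For \eqref{weak-f0_sur}, the quantity $(I-\nu\otimes\nu):\nabla V$ is the tangential divergence $\mathrm{div}_{\partial G} V$ of $V$ on the $C^2$ surface $\partial G$; the classical tangential-divergence (first variation of area) identity $\int_{\partial G}\mathrm{div}_{\partial G} V\,dS = \int_{\partial G} 2H\,\nu\cdot V\,dS$ (valid since $V$ is compactly supported in $\Omega$ and $\partial G\subset\Omega$ is closed) finishes the step. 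For \eqref{weak-f0_vdW}, using $(\chi_G-1)^2 = \chi_{G^c}$ I would rewrite the right-hand side as $-\rho_0\int_{G^c}[U\,\mathrm{div}\,V + \nabla U\cdot V]\,dx = -\rho_0\int_{G^c}\mathrm{div}(UV)\,dx$; since $\{x_1,\dots,x_N\}\cap\mathrm{supp}(V)=\emptyset$, $U$ is $C^1$ on an open neighbourhood of $\mathrm{supp}(V)\cap\overline{G^c}$, so the divergence theorem on $G^c$ gives $\rho_0\int_{\partial G} U\,\nu\cdot V\,dS$, matching $\int_{\partial G} f_{0,{\rm vdW}}\cdot V\,dS$.

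For the electrostatic identity \eqref{weak-f0_ele}, I would split the integral as $\int_\Omega = \int_G + \int_{G^c}$ and integrate by parts in each piece separately, since $\psi_{\chi_G}|_G\in H^2(G)$ and $\psi_{\chi_G}|_{G^c}\in H^2(G^c)$. Inside each open set, $\chi_G$ is locally constant, so the computation from the proof of Lemma~\ref{l:Stress} (with $\nabla\phi=0$, and using \eqref{psiGp}--\eqref{psiGw}) yields $\nabla\cdot T_{\rm ele}(\chi_G) = -\rho\nabla\psi_{\chi_G}$ in $G$ and in $G^c$. Summing the two integrations by parts therefore contributes $-\int_\Omega \rho\nabla\psi_{\chi_G}\cdot V\,dx$, which cancels the $\rho\nabla\psi_{\chi_G}$ term on the right-hand side of \eqref{weak-f0_ele}, and leaves the boundary term $-\int_{\partial G}\bigl[T_{\rm ele}(\chi_G)|_G - T_{\rm ele}(\chi_G)|_{G^c}\bigr]\nu\cdot V\,dS$ (the normals from the two sides are opposite, so the traces subtract).

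The remaining — and only nontrivial — task is to identify this jump. Decomposing $\nabla\psi_{\chi_G}$ at $\partial G$ on each side into its normal and tangential parts, I would write $\nabla\psi_{\chi_G}|_{\cdot} = (\partial_\nu\psi_{\chi_G})|_{\cdot}\,\nu + \nabla_\Gamma\psi_{\chi_G}$, where by \eqref{psiGcont} the tangential piece $\nabla_\Gamma\psi_{\chi_G} = (I-\nu\otimes\nu)\nabla\psi_{\chi_G}$ is single-valued, and by \eqref{psiGgrad} the quantity $\varepsilon(\chi_G)\partial_\nu\psi_{\chi_G}$ is single-valued. Substituting into $\varepsilon(\chi_G)\nabla\psi\otimes\nabla\psi\cdot\nu - \tfrac{\varepsilon(\chi_G)}{2}|\nabla\psi|^2\nu - (\chi_G-1)^2 B(\psi)\nu$, the mixed term $\varepsilon(\chi_G)(\partial_\nu\psi)\nabla_\Gamma\psi$ evaluates to the common value $\varepsilon(\chi_G)\partial_\nu\psi_{\chi_G}\,\nabla_\Gamma\psi_{\chi_G}$ on both sides and thus drops out of the difference; the pure normal terms collapse, via $\varepsilon_p(\partial_\nu\psi|_G)^2 = (\varepsilon(\chi_G)\partial_\nu\psi)^2/\varepsilon_p$ and the analogous identity in $G^c$, to $\tfrac12(\varepsilon(\chi_G)\partial_\nu\psi_{\chi_G})^2\bigl(1/\varepsilon_p - 1/\varepsilon_w\bigr)\nu$; the pure tangential terms collapse to $\tfrac12(\varepsilon_w-\varepsilon_p)|\nabla_\Gamma\psi_{\chi_G}|^2\nu$; and the $B(\psi)$ term, nonzero only on the $G^c$ side, contributes $B(\psi_{\chi_G})\nu$. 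Negating and comparing with \eqref{f0_ele} gives exactly $f_{0,{\rm ele}}[\partial G]$. The algebraic bookkeeping of this jump computation is where I expect the only real effort; the rest is routine divergence-theorem manipulation.
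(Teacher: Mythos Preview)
Your proposal is correct and follows essentially the same approach as the paper. The paper handles \eqref{weak-f0_vol}--\eqref{weak-f0_vdW} by the same divergence-theorem computations you outline (citing Lemma~10.8 of Giusti for the first-variation-of-area identity in \eqref{weak-f0_sur}), and defers \eqref{weak-f0_ele} to Theorem~\ref{th:f_ele-conv}, whose proof of \eqref{Fspecial} is precisely your split $\int_\Omega=\int_G+\int_{G^c}$, integration by parts using \eqref{psiGp}--\eqref{psiGw}, and the normal/tangential jump computation based on the transmission conditions \eqref{psiGcont}--\eqref{psiGgrad}.
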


\begin{proof}
Eq.~\reff{weak-f0_vol} follows from the identity $I:\nabla V = \nabla \cdot V$ 
and an application of the divergence theorem.  
Eq.~\reff{weak-f0_sur} follows from our definition of force $f_{0, {\rm sur}}$
and the known result (cf.\ Lemma~10.8 in \cite{Giusti84}): 
\[
 \int_{\partial G} 2 H \nu \cdot V \, dS = \int_{\partial G} (I - \nu \otimes \nu) : \nabla V \, dS.  
\]  
Assume each $x_i \not\in \mbox{supp}\,(V) $ $(1 \le i \le N).$ Noticing that
$\nu$ points from $G$ to $G^c=\Omega\setminus G$, we have by the definition of $T_{\rm vdW}(\chi_G)$ 
(cf.\ \reff{stressphivdW}) and the divergence theorem that 
\begin{align*}
&\int_{\Omega} \left[  T_{\rm vdW}(\chi_G):\nabla V +\rho_0(1-\chi_G)^2\nabla U\cdot V\right] \,dx  
\\
&\qquad 
 = \rho_0 \int_{G^c} ( U \nabla \cdot V + \nabla U \cdot V) \, dx
\\
&\qquad 
= \rho_0 \int_{G^c} \nabla ( U V) \, dx 
\\
&\qquad 
= -\rho_0 \int_{\partial G} U \nu \cdot V\, dS, 
\end{align*}
leading to \reff{weak-f0_vdW}. 
Finally, Eq.~\reff{weak-f0_ele} is part of Theorem~\ref{th:f_ele-conv} that is 
proved in Section~\ref{s:PB}. 
\end{proof}

\subsubsection{Force Convergence}


Let $D$ be a nonempty, open, and bounded subset of $\R^n$ with $n \ge 2.$
For any measurable subset $G$ of $D $ with $\overline{G} \subset D $ and 
$P_D (G) < \infty,$  we denote by $\partial^*G $ the reduced boundary of $G$ and  
by $\| \partial G\|=\cH^{n-1}\mres(\partial^*G\cap D )$ the perimeter measure of $G$ 
in $D$, where $\cH^{n-1}$ denotes the $(n-1)$-dimensional Hausdorff measure  
\cite{Giusti84,EvansGariepy_Book92,Ziemer_Book89}.  
We also denote  by $\nu: D \to \R^n$  the unit outer normal of $\partial^*G$.  
We recall that $|\nu| = 1$ $\|\partial G\|$-a.e.\ and  
\begin{equation}
\label{perimetermeasure}
\int_G \nabla \cdot g \, dx = \int_{\partial^*G} g \cdot  \nu \, d\calH^{n-1}
\qquad \forall g \in C_c^1(\Omega, \R^n). 
\end{equation}

The following result states that the convergence of total force is equivalent
to that of individual forces; 
its proof is given in Section~\ref{s:ForceConvergenceSolvation}: 


\begin{theorem}[Force convergence for the solvation free-energy functional]
\label{t:ForceConvSolvation}
Let $G$ be  a measurable subset of $\Omega$ such that $\overline{G} \subset\Omega$, 
$P_\Omega(G) < \infty,$ 
and $F_0[\chi_G]$ is finite.  
Let $\xi_k\searrow 0$ and $\phi_k \in H^1(\Omega) $ $(k = 1, 2, \dots)$ be such that  
$\phi_k \to \chi_G$ a.e.\ in $\Omega$ and $ F_{\xi_k} [\phi_k] \to  F_0[\chi_G].  $
Then we have for any $V \in C_c^1(\Omega, \R^3)$ that 
\begin{align}
& \lim_{k \to \infty} \int_\Omega T_{\rm vol}  (\phi_k) : \nabla V \, dx 
= \int_{\Omega} T_{\rm vol}(\chi_G):\nabla V \, dx, 
	\label{f_vol-conv}\\
& \lim_{k \to \infty} \int_\Omega T_{\xi_k, {\rm sur}} (\phi_k) : \nabla V \, dx 
= \gamma_0\int_{\partial^* G} \left( I-\nu\otimes\nu \right):\nabla V\, d\calH^{2}, 
\label{CH-force-conv}
\\
&\lim_{k \to \infty}  \int_\Omega \left[ ( T_{\rm vdW} (\phi_k) : \nabla V
 + \rho_0 ( \phi_k - 1)^2 \nabla U \cdot V \right] dx  \nn\\
&\quad= \int_\Omega \left[  T_{\rm vdW} (\chi_G) : \nabla V
 + \rho_0 ( \chi_G - 1)^2 \nabla U \cdot V \right]  dx  
 \quad  \mbox{if }  
\{ x_1, \dots, x_N \} \cap \mbox{\rm supp}\, (V) =  \emptyset,  
	\label{f_vdW-conv}\\
& \lim_{k\to \infty} \int_\Omega \left[  T_{\rm ele}(\phi_k) 
: \nabla V -\rho \nabla \psi_{\phi_k} \cdot V \right]  \, dx 
= \int_\Omega \left[  T_{\rm ele}(\chi_G) : \nabla V -\rho \nabla \psi_{\chi_G} \cdot V \right]
 \, dx.  \label{f_ele-conv}
\end{align} 
\end{theorem}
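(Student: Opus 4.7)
I split the four displayed identities into the two ``hard'' convergences, \reff{CH-force-conv} and \reff{f_ele-conv}, which will be reduced to the dedicated Theorems \ref{th:CH-force-conv} and \ref{th:f_ele-conv}, and the two ``elementary'' convergences, \reff{f_vol-conv} and \reff{f_vdW-conv}, which will be derived from Theorem \ref{t:individual} together with a Scheff\'e-type upgrade of the a.e.\ convergence of $\phi_k$. For the reductions: by Theorem \ref{t:individual}, the assumption $F_{\xi_k}[\phi_k]\to F_0[\chi_G]$ implies the individual energy convergences \reff{surface} and \reff{Ele}. These are exactly the hypotheses needed to invoke Theorem \ref{th:CH-force-conv} for the surface stress, and Theorem \ref{th:f_ele-conv} for the electrostatic stress, which yields \reff{CH-force-conv} and \reff{f_ele-conv} respectively.

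\textbf{The volume part \reff{f_vol-conv}.} Since $T_{\rm vol}(\phi)=P_0\phi^2 I$, the integrand on the left equals $P_0\phi_k^2\,\nabla\cdot V$. The a.e.\ convergence $\phi_k\to\chi_G$ gives $\phi_k^2\to\chi_G^2=\chi_G$ a.e., and \reff{volume} of Theorem \ref{t:individual} gives $\int_\Omega\phi_k^2\,dx\to|G|=\int_\Omega\chi_G\,dx$. Since both sequences are nonnegative with convergent integrals, Scheff\'e's lemma yields $\phi_k^2\to\chi_G$ in $L^1(\Omega)$, and multiplication against the bounded function $\nabla\cdot V$ closes the argument.

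\textbf{The van der Waals part \reff{f_vdW-conv}.} I first upgrade to $\phi_k\to\chi_G$ in $L^2(\Omega)$. Writing $(\phi_k-\chi_G)^2=\phi_k^2-2\phi_k\chi_G+\chi_G^2$, the outer two terms already converge in $L^1(\Omega)$ by the previous step. For the cross term, $\phi_k\chi_G\to\chi_G$ a.e.\ and is bounded above by $\tfrac12(\phi_k^2+\chi_G^2)$, whose $L^1$-norm converges; the generalized dominated convergence theorem with varying majorant then gives $\phi_k\chi_G\to\chi_G$ in $L^1(\Omega)$. Hence $(\phi_k-\chi_G)^2\to 0$ in $L^1(\Omega)$, so $\phi_k\to\chi_G$ in $L^2(\Omega)$ and in particular $(\phi_k-1)^2\to(\chi_G-1)^2$ in $L^1(\Omega)$. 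Under the hypothesis $\{x_1,\dots,x_N\}\cap\mathrm{supp}\,(V)=\emptyset$, assumption (A2) ensures that $U$ and $\nabla U$ are continuous and therefore bounded on the compact set $\mathrm{supp}\,(V)$. Both integrands in \reff{f_vdW-conv} are supported on $\mathrm{supp}\,(V)$, so they converge in $L^1$.

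\textbf{Main obstacle.} The genuine difficulty sits inside Theorem \ref{th:CH-force-conv}: because we do not assume the second-order regularity \reff{phasefieldH2}, we cannot pass to the limit in $-\xi_k\Delta\phi_k+\xi_k^{-1}W'(\phi_k)$ directly, nor can we invoke the rectifiability machinery of R\"oger--Sch\"atzle. The only input available is the bare convergence of the Modica--Mortola energies to $P_\Omega(G)$, from which one must extract asymptotic equipartition $\xi_k|\nabla\phi_k|^2-2\xi_k^{-1}W(\phi_k)\to 0$ in $L^1(\Omega)$ together with the asymptotic alignment of $\nabla\phi_k$ with the measure-theoretic normal $\nu$ to $\partial^*G$. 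These two pieces are what collapse the anisotropic piece $-\xi_k\nabla\phi_k\otimes\nabla\phi_k$ of $T_{\xi_k,\rm sur}(\phi_k)$ against $\nabla V$ to $-\nu\otimes\nu$ tested on $\partial^*G$, giving the tangential projector $I-\nu\otimes\nu$ that appears on the right-hand side of \reff{CH-force-conv}; this is the reason we phrase the limit force in the weak, stress-tensor form.
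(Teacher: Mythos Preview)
Your reduction of \reff{CH-force-conv} to Theorem~\ref{th:CH-force-conv} via Theorem~\ref{t:individual} is exactly right and matches the paper's route (the paper is terser and does not spell out that \reff{surface} must be supplied as hypothesis \reff{important}, but that is the content). Your treatment of \reff{f_vol-conv} and \reff{f_vdW-conv} is correct but takes a different path: the paper simply observes that convergence of $F_{\xi_k}[\phi_k]$ together with Lemma~\ref{l:LowerBound} gives $\sup_k\|\phi_k\|_{L^4(\Omega)}<\infty$, and then Lemma~\ref{l:Lq} upgrades the a.e.\ convergence to convergence in every $L^q$ with $q<4$, from which both \reff{f_vol-conv} and \reff{f_vdW-conv} follow in one line. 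Your Scheff\'e-plus-generalized-DCT argument is a self-contained alternative that avoids the $L^4$ bound for these two parts, which is a nice observation.

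There is, however, a small but genuine slip in your justification of \reff{f_ele-conv}. You write that \reff{Ele} is ``exactly the hypothesis needed to invoke Theorem~\ref{th:f_ele-conv}''; it is not. Theorem~\ref{th:f_ele-conv} does not assume convergence of the electrostatic energies---it assumes \reff{threephik}, i.e.\ $\sup_k\|\phi_k\|_{L^4(\Omega)}<\infty$ together with a.e.\ convergence. Your argument never supplies this $L^4$ bound (you only reach $L^2$ convergence). The fix is immediate: since $F_{\xi_k}[\phi_k]\to F_0[\chi_G]$ is finite, the sequence is bounded, and Lemma~\ref{l:LowerBound} then gives $\sup_k\|\phi_k\|_{L^4(\Omega)}<\infty$. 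With that in hand, Theorem~\ref{th:f_ele-conv} applies directly. Once you make this correction, your $L^4$ bound also lets you replace the Scheff\'e/DCT detour for \reff{f_vol-conv} and \reff{f_vdW-conv} by the one-line $L^q$ argument the paper uses, though your longer route remains valid.
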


The force convergence for the van der Waals--Cahn--Hilliard functional is the main
part of the above theorem. Since this functional is rather a general model, we 
state separately the result of its force convergence for a general $n$-dimensional space. 
For simplicity of notation, we define the stress tensor 
$T_{\xi}(\phi)$ to be the same as $T_{\xi,{\rm sur}}(\phi)$ defined in \reff{stressphisurf}, except we 
take $\gamma_0 = 1$, i.e., we define for a function $\phi$ of $n$-variables
\begin{equation*}
T_{\xi} (\phi) =  \left[ \frac{\xi}{2} |\nabla \phi|^2 + \frac{1}{\xi} W(\phi)\right] I
- \xi \nabla \phi \otimes \nabla \phi, 
\end{equation*}
where $I$ is the $n\times n$ identity matrix. 


\begin{theorem}[Force convergence for the van der Walls--Cahn--Hilliard functional]
\label{th:CH-force-conv}
Let $\Omega $ be a nonempty, bounded, and open subset of $\R^n.$
Let $G$ be a nonempty,  measurable subset of $\Omega$ 
such that $\overline{G} \subset \Omega$ and $P_\Omega (G) < \infty$. 
Assume $\xi_k \searrow 0$ and 
 $\phi_k \in H^1(\Omega)$ $(k = 1, 2, \dots)$ satisfy that 
$\phi_k \to \chi_G$ a.e.\ in $\Omega$ and that 
\begin{equation}
\label{important}
\lim_{k \to \infty} \int_\Omega \left[ \frac{\xi_k}{2} |\nabla \phi_k|^2 
+ \frac{1}{\xi_k} W(\phi_k) \right] dx = P_\Omega(G). 
\end{equation}
Then we have for any $\Psi \in C_c(\Omega, \R^{n\times n})$ that
	  	\begin{align} \label{CH-force-conv1}
			\lim_{k \to \infty} \int_\Omega T_{\xi_k} (\phi_k) : \Psi \, dx 
			= \int_{\partial^* G} \left(I- \nu\otimes\nu\right):\Psi\, d\calH^{n-1}.   
		\end{align}
	If, in addition, $\phi_k\in W^{2,2}(\Omega)$ $(k = 1, 2, \dots)$, 
 $G$ is open,  and $\partial G$ is of $C^2$, then we have for any $V\in C_c^1(\Omega,\R^n)$ that 
		\begin{align}\label{CH-force-conv2}
			\lim_{k\to\infty}\int_\Omega\left[  -\xi_k\Delta\phi_k 
+ \frac1{\xi_k} W'(\phi_k)\right] \nabla\phi_k \cdot V\;dx
			= - (n-1)\int_{\partial G} H\nu\cdot V \, dS.
		\end{align}
\end{theorem}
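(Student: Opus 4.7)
The plan is to reduce the key limit \reff{CH-force-conv1} to the weak-$*$ convergence of two Radon measures and then derive \reff{CH-force-conv2} by integration by parts. Write $T_{\xi_k}(\phi_k) = e_k I - \tau_k$ with scalar density
\[
e_k = \frac{\xi_k}{2}|\nabla \phi_k|^2 + \frac{1}{\xi_k} W(\phi_k)
\]
and tensor density $\tau_k = \xi_k \nabla \phi_k \otimes \nabla \phi_k$. Then \reff{CH-force-conv1} will follow once we establish
\[
e_k\,dx \rightharpoonup^* \|\partial G\| \qquad \text{and} \qquad \tau_k\,dx \rightharpoonup^* (\nu \otimes \nu)\,\|\partial G\|
\]
as Radon measures on $\Omega$: contracting against $\Psi \in C_c(\Omega,\R^{n\times n})$ then gives $T_{\xi_k}(\phi_k){:}\Psi = e_k (I{:}\Psi) - \tau_k{:}\Psi$ integrated to $\int_{\partial^* G}(I - \nu\otimes\nu){:}\Psi\,d\calH^{n-1}$.

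The first ingredient is the Modica--Mortola equi-partition. Setting $a_k = \sqrt{\xi_k/2}\,|\nabla \phi_k|$, $b_k = \sqrt{W(\phi_k)/\xi_k}$, and $H(t) = \int_0^t \sqrt{2W(s)}\,ds$ (so $H(1) = 1$ by the normalization of $W$), the pointwise identity $e_k = (a_k - b_k)^2 + |\nabla H(\phi_k)|$ combines with the standard lower-semicontinuity bound $\liminf_k \int_\Omega |\nabla H(\phi_k)|\,dx \ge P_\Omega(G)$ (valid because $H(\phi_k) \to \chi_G$ in $L^1(\Omega)$, which in turn requires a truncation step controlling $\phi_k$ outside $[0,1]$ using the energy bound together with $W(\phi) \gtrsim \phi^2$ for large $|\phi|$) and hypothesis \reff{important} to yield both $\int_\Omega |\nabla H(\phi_k)|\,dx \to P_\Omega(G)$ and $\|a_k - b_k\|_{L^2(\Omega)}^2 \to 0$. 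Consequently $e_k\,dx$ has the same weak-$*$ limit as $|\nabla H(\phi_k)|\,dx$, and Cauchy--Schwarz gives $\|\xi_k |\nabla \phi_k|^2 - |\nabla H(\phi_k)|\|_{L^1(\Omega)} = \|2 a_k(a_k - b_k)\|_{L^1(\Omega)} \le 2\|a_k\|_{L^2} \|a_k - b_k\|_{L^2} \to 0$.

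The second ingredient is Reshetnyak's continuity theorem. Since $H(\phi_k) \to \chi_G$ in $L^1(\Omega)$, the vector-valued measures $\nabla H(\phi_k)\,dx$ converge weakly-$*$ to $\nabla \chi_G = \nu\,\|\partial G\|$, and we just showed that their total variations converge to $P_\Omega(G) = \|\partial G\|(\Omega)$. Reshetnyak's theorem then yields, for every $\Psi \in C_c(\Omega, \R^{n \times n})$,
\[
\int_\Omega \frac{\nabla H(\phi_k) \otimes \nabla H(\phi_k)}{|\nabla H(\phi_k)|}:\Psi\,dx \;\longrightarrow\; \int_{\partial^* G} \nu \otimes \nu :\Psi\,d \calH^{n-1}.
\]
The integrand on the left equals $\bigl(\nabla \phi_k \otimes \nabla \phi_k/|\nabla \phi_k|^2\bigr) \cdot \sqrt{2 W(\phi_k)}\,|\nabla \phi_k|:\Psi$, so replacing the scalar factor $\sqrt{2W(\phi_k)}\,|\nabla \phi_k|$ by $\xi_k |\nabla \phi_k|^2$ introduces error bounded by $\|\Psi\|_\infty \|2 a_k(a_k - b_k)\|_{L^1} \to 0$. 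Hence $\int_\Omega \tau_k :\Psi\,dx \to \int_{\partial^* G} \nu \otimes \nu :\Psi\,d \calH^{n-1}$; combined with $\int_\Omega e_k (I{:}\Psi)\,dx \to \int_{\partial^* G}(I{:}\Psi)\,d\calH^{n-1}$, this proves \reff{CH-force-conv1}.

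For \reff{CH-force-conv2}, identity \reff{Tfsur} of Lemma~\ref{l:Stress} (with $\gamma_0 = 1$) gives $\nabla \cdot T_{\xi_k}(\phi_k) = [-\xi_k \Delta \phi_k + \xi_k^{-1} W'(\phi_k)] \nabla \phi_k$ for $\phi_k \in H^2(\Omega)$, so integration by parts against $V \in C_c^1(\Omega,\R^n)$ reduces the left-hand side of \reff{CH-force-conv2} to $-\int_\Omega T_{\xi_k}(\phi_k) : \nabla V\,dx$. Applying \reff{CH-force-conv1} with $\Psi = \nabla V$ and then the surface divergence identity $\int_{\partial G} (I - \nu \otimes \nu) : \nabla V\,d \calH^{n-1} = (n-1) \int_{\partial G} H \nu \cdot V\,d \calH^{n-1}$ (Lemma~10.8 of \cite{Giusti84}) delivers \reff{CH-force-conv2}. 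The main obstacle is the tensor identification: without the second-order regularity \reff{phasefieldH2} used in \cite{RogerSchatzle06}, varifold techniques are unavailable. The polar-decomposition idea above replaces them, using equi-partition to exchange the extrinsic factor $\xi_k |\nabla \phi_k|^2$ for the intrinsic Modica--Mortola Jacobian $|\nabla H(\phi_k)|$, at which point Reshetnyak's continuity theorem recovers the correct normal direction $\nu$ on $\partial^* G$.
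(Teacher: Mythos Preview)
Your argument is correct and takes a genuinely different route from the paper's.  Both proofs share the same two preliminary reductions: the asymptotic equi-partition $\|a_k-b_k\|_{L^2}\to 0$ (the paper's Lemma~\ref{l:equienergy}) and the strict convergence $\nabla H(\phi_k)\,dx \rightharpoonup^* D\chi_G$ with $\int_\Omega|\nabla H(\phi_k)|\,dx\to P_\Omega(G)$ (a consequence of Lemma~\ref{l:etak} plus \reff{important}).  From there the paths diverge.  You invoke Reshetnyak's continuity theorem on the strictly converging vector measures $\nabla H(\phi_k)\,dx$ with the test integrand $f(x,p)=p\otimes p:\Psi(x)$, which in one stroke identifies the limiting tensor $\nu\otimes\nu$ on $\partial^*G$; equi-partition then lets you swap the Jacobian factor $\sqrt{2W(\phi_k)}\,|\nabla\phi_k|$ for $\xi_k|\nabla\phi_k|^2$ at an $o(1)$ cost.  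The paper instead proves the tensor limit \reff{tensor-conv2} by hand: it uses the De~Giorgi structure theorem to write $\partial^*G=\bigcup_j K_j\cup Q$, builds an explicit continuous extension $\nu_J$ of the normal off finitely many $C^1$ pieces, decomposes $\xi_k\nabla\phi_k\otimes\nabla\phi_k$ into three terms (display \reff{decomposition}), and controls each via four elementary claims.  In effect the paper reproves, from scratch, the special case of Reshetnyak needed here.  Your approach is shorter and conceptually cleaner; the paper's is self-contained and avoids citing the Reshetnyak machinery.  One cosmetic slip: $D\chi_G=-\nu\,\|\partial G\|$ with the paper's outward-normal convention, but since $(-\nu)\otimes(-\nu)=\nu\otimes\nu$ this does not affect your conclusion.
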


We remark that the assumption of the above theorem requires 
the convergence of free-energy, i.e., \reff{important}. 
Such convergence is not guaranteed by the assumptions that $\phi_k \to \chi_G$ a.e.\ in $\Omega$ 
and $\phi_k \to \chi_G$ in $L^1(\Omega)$. This is expected as not every
such sequence is a recovery sequence of the $\Gamma$-convergence.
In particular, let $G$ be an open subset of $\Omega$ with a smooth boundary $\partial G$ and
$\overline{G} \subset \Omega$, and let $\beta $ be any real number such that 
\[
\beta\geq \sigma :=\int_0^1\sqrt{2W(s)}\;ds. 
\] 
(We have $\sigma = 1$ for our choice of $W$.)
We show that there exist $\phi_k\in H^1(\Omega)$ $(k = 1, 2, \dots)$ such that 
 \begin{enumerate}
 	\item[(1)] 
$\phi_k\to \chi_G$ a.e.\ in $\Omega$ and $\phi_k\to \chi_G$ in $L^1(\Omega)$;
	\item[(2)]		
$\disp \lim_{k\to\infty} \int_\Omega \left[ \frac{\xi_k}{2}|\nabla\phi_k|^2 
		+\frac{W(\phi_k)}{\xi_k} \right]  dx =\beta P_{\Omega}(G).$ 
\end{enumerate}

Let $a > 0$ and define $W_a (s) = {W(s)}/a$ $(s\in \R).$ 
For each $k \ge 1$, we define $q_k: [0,1]\to \R$ by
\[
q_k (t) = \int_0^t \frac{\xi_k }{\sqrt{2 [ W_a(\tau ) + \xi_k] }}\, d\tau  \qquad \forall t\in[0,1].
\]
Clearly, $q_k $ is a strictly increasing function of $t \in [0, 1]$  with $q_k (0)  = 0,$
$\lambda_k  := q_k (1)  \in (0, \sqrt{\xi_k /2}),$ and $q_k(t) \le t$ for any $t \in [0,1].$  
Let $g_k : [0,\lambda_k ]\rightarrow[0,1]$ be the inverse of $q_k: [0,1]\to [0,\lambda_k ]$.
By using the formula of derivatives of inverse functions, we obtain
\begin{equation*}
 g_k'( s ) = \frac{1}{\xi_k} \sqrt{ 2 [ W_a  (g_k(s)) + \xi_k] } \qquad \forall  s \in[0,\lambda_k ]. 
\end{equation*}
We extend $g_k $  onto the entire real line by defining
$g_k (s) = 0$ for any $ s < 0$ and $g_k (s) = 1$ for any $ s > \lambda_k.$
Denote now by $d: \Omega \to\R$ the signed distance function to the boundary $\partial G:$
$d(x) = \mbox{dist}\,(x, \partial G)$ if $ x \in G$ and 
$d(x) = -\mbox{dist}\,(x, \partial G)$ if $ x \in G^c$.  Let $\xi_k \searrow 0.$
Define $\phi_k: \Omega \to [0, 1]$ by  $\phi_k (x) = {g}_k (d(x))$ $(x\in \Omega)$. 
Then $\phi_k \to \chi_G$ a.e.\ in $\Omega$ and $\phi_k \to \chi_G$ in $L^1(\Omega)$ 
 \cite{Modica_ARMA87,Sternberg_ARMA88}.
Moreover, since $\partial G$ is smooth, we have for a.e.\ $x\in \Omega$ and $k$ large enough that 
\begin{align*}
|\nabla\phi_k(x)| = |  g_k'(d(x))\nabla d(x)| = 
\frac{1}{\xi_k}\sqrt{ 2 \left[  W_a (\phi_k(x)) +\xi_k \right] }.
\end{align*}

Note for any $s \in [0, 1]$ that $\phi_k(x) = s$ if and only if $ d(x) = q_k(s)$, 
and $q_k(s) \le \lambda_k \to 0$ as $k \to \infty.$ 
Since $\partial G$ is smooth, we have (cf.\ Lemma~4 in \cite{Modica_ARMA87}
and Lemma~2 in \cite{Sternberg_ARMA88}) that 
\begin{align*}
& 
\lim_{k\to \infty} \sup_{0 \le s \le 1 } \calH^{n-1}( \{ x\in \Omega: \phi_k (x) = s \} )
\\
&\qquad 
= \lim_{k\to \infty} \sup_{0 \le s \le 1} \calH^{n-1}( \{ x\in \Omega: d(x) = q_k(s) \} )
\\
&\qquad 
= P_\Omega(G).
\end{align*}
Consequently, applying the co-area formula and the Lebesgue Dominated Convergence Theorem, 
we obtain that 
\begin{align*}
\lim_{k\to\infty} &
	\int_\Omega \left[ \frac{\xi_k}{2}|\nabla \phi_k|^2  
                + \frac{W(\phi_k)}{\xi_k} \right] dx \nn\\
	&=\lim_{k\to\infty}\int_{\Omega} \left( \frac{ \sqrt{  W_a (\phi_k) +\xi_k } }{\sqrt 2} 
	+ \frac {a W_a (\phi_k)}{\sqrt {2 \left[ W_a (\phi_k) 
        +\xi_k \right] }} \right) |\nabla\phi_k|\, dx \nn	\\
	&=\lim_{k\to\infty}\int_0^1 \calH^{n-1}( \{ x\in \Omega: \phi_k (x) = s \} )
\left( \frac{\sqrt{  W_a (s) +\xi_k } }{\sqrt 2} 
	+ \frac {a W_a (s)}{\sqrt {2 \left[  W_a (s) +\xi_k \right] }} \right)\;ds \nn\\
	&=P_{\Omega}(G)\int_0^1 \frac{1+a}{\sqrt 2} \sqrt{ W_a (s)}\;ds	\nn\\
	&= \frac{1+a}{2\sqrt{a}}\sigma \, P_{\Omega}(G).
\end{align*}
If $\beta = \sigma$, we can take $a=1$. If $\beta>\sigma$, we have two choices of $a>0$ 
such that $\beta =  (1+a)\sigma /(2\sqrt{a}) $. Thus for any $\beta\geq \sigma$ we can find 
$\phi_k$ $(k = 1, 2, \dots)$ that satisfy (1) and (2). 
\section{The Poisson--Boltzmann Electrostatics}
\label{s:PB}


We first present some basic results regarding the boundary-value problem of Poisson--Boltzmann
equation and the corresponding electrostatic free energy for a function $\phi: \Omega \to \R$
that describes the dielectric environment. These results unify and improve those of 
Theorem~2.1 in \cite{LiChengZhang_SIAP11} and Theorem~2.1 in \cite{LiLiu_SIAP15}. 
We recall that the set $\calA$ and functional $E_\phi$ are defined in \reff{A}
and \reff{Ephiu}, respectively. 

\begin{theorem}
\label{t:phiPB}
Let $\phi \in L^4(\Omega).$ There exists a unique $\psi_{\phi} \in \calA$ such that
\begin{equation}
\label{Ephipsiphi}
 E_\phi [\psi_\phi] = \min_{u \in \calA} E_\phi [u], 
\end{equation}
which is finite. Moreover, $\psi_\phi \in \calA$ is the unique weak solution to
the boundary-value problem of Poisson--Boltzmann equation \reff{PhaseFieldPBE} and
\reff{PhaseFieldBC}, i.e., $\psi_{\phi} \in \calA$ and
\begin{equation}
\label{psiphiweak}
\int_\Omega  \left[ \ve(\phi) \nabla \psi_{\phi} \cdot \nabla \eta +
(\phi-1)^2 B'(\psi_{\phi} ) \, \eta \right] dx = \int_\Omega \rho  \eta \, dx
\qquad \forall \eta \in H_0^1(\Omega).
\end{equation}
Finally, $\psi_\phi \in L^\infty(\Omega)$ and 
there exists a constant $C > 0$ independent of $\phi \in L^4(\Omega)$ such that 
\begin{align*}
  & \|\chi_{\{ \phi \ne 1\} } \psi_{\phi} \|_{L^\infty(\Omega)} \le  C, \\
  & \|\psi_{\phi} \|_{H^1(\Omega)} \le C \left( 1 + \| \phi \|_{L^2(\Omega)} \right), \\
  & \|\psi_{\phi} \|_{L^\infty(\Omega)} \le  C \left( 1 + \| \phi \|_{L^4(\Omega)}^2 \right).  
\end{align*}
\end{theorem}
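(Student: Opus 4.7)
The plan is to carry out the proof in three stages: variational characterization and equivalence with the weak formulation, $H^1$ estimate, and the two $L^\infty$ estimates.

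First I would lift the boundary data by writing any $u\in\calA$ as $u=v+\tilde\psi_\infty$ with $v\in H_0^1(\Omega)$ and $\tilde\psi_\infty\in W^{2,\infty}(\Omega)$. Under (A3) the coefficient $\ve(\phi)$ is uniformly pinched between $\min(\ve_{\rm p},\ve_{\rm w})>0$ and $\max(\ve_{\rm p},\ve_{\rm w})$, so the Dirichlet term is equivalent to $\|\nabla v\|_{L^2}^2$. Under (A4) we have $B\ge 0$ and $B'$ strictly increasing with $B'(0)=0$, so $E_\phi$ is strictly convex in $u$, coercive in $v$ via Poincaré (the linear Coulomb term is controlled by Cauchy--Schwarz plus Young), and weakly sequentially lower semicontinuous (gradient term by convexity; $B$-term by Fatou). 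The direct method delivers a unique minimizer $\psi_\phi$. Gâteaux differentiability of $E_\phi$, checked using the growth of $B'$ and the $L^4$-assumption on $\phi$ to dominate the difference quotient of $(\phi-1)^2 B(u)$, shows that minimality is equivalent to the Euler--Lagrange identity \reff{psiphiweak}; since any weak solution is a critical point of a strictly convex functional, uniqueness of weak solutions also follows.

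For the $H^1$ bound I would test \reff{psiphiweak} with $\eta=\psi_\phi-\tilde\psi_\infty\in H_0^1(\Omega)$. The gradient cross-term is absorbed via Young's inequality. The Coulomb term is controlled by $\|\rho\|_{L^\infty}$ times $\|\psi_\phi-\tilde\psi_\infty\|_{L^1}$ and thus by $\|\psi_\phi\|_{H^1}$ via Poincaré. The Boltzmann contribution splits as $\int(\phi-1)^2 B'(\psi_\phi)\psi_\phi\ge 0$ (because $B'(s)s\ge 0$ for all $s$) minus $\int(\phi-1)^2 B'(\psi_\phi)\tilde\psi_\infty$; using the convexity bound $|B'(s)|\le B'(s)s + C$ and $\tilde\psi_\infty\in L^\infty$, the second piece is absorbed into the first with a constant $C\|(\phi-1)^2\|_{L^1}\le C(1+\|\phi\|_{L^2}^2)$ left over. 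Rearranging gives $\|\psi_\phi\|_{H^1}\le C(1+\|\phi\|_{L^2})$.

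For the pointwise bounds I would use Stampacchia truncation: fix $K\ge\|\psi_\infty\|_{L^\infty}$, test with $(\psi_\phi-K)^+\in H_0^1(\Omega)$, drop the nonnegative gradient term, and use the monotonicity of $B'$ together with $B'(+\infty)=+\infty$ to conclude that on $\{\psi_\phi>K\}\cap\{\phi\ne 1\}$ the pointwise inequality $(\phi-1)^2 B'(\psi_\phi)\le\rho$ forces the set to be empty once $K$ is chosen sufficiently large; a symmetric argument on $(\psi_\phi+K)^-$ handles the lower bound, yielding $\|\chi_{\{\phi\ne 1\}}\psi_\phi\|_{L^\infty}\le C$. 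The global bound $\|\psi_\phi\|_{L^\infty}\le C(1+\|\phi\|_{L^4}^2)$ comes from a Moser iteration on the linear equation $-\nabla\cdot\ve(\phi)\nabla\psi_\phi = \rho-(\phi-1)^2 B'(\psi_\phi)$, where the right-hand side is estimated in $L^{3/2}$ via the $H^1\hookrightarrow L^6$ embedding and the $H^1$ bound just established, with the $L^4$ norm of $\phi$ entering through the $(\phi-1)^2$ factor.

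The main obstacle is the claim $\|\chi_{\{\phi\ne 1\}}\psi_\phi\|_{L^\infty}\le C$ with $C$ independent of $\phi$: the Boltzmann penalty $(\phi-1)^2$ can be arbitrarily small on the relevant set, which threatens to spoil the truncation argument. The saving feature is the superlinear growth $B'(\pm\infty)=\pm\infty$ assumed in (A4), together with the observation that the truncated test function annihilates the gradient contribution and leaves only a pointwise comparison between $(\phi-1)^2 B'(\psi_\phi)$ and $\rho$; choosing $K$ large enough, depending only on $\|\rho\|_{L^\infty}$ and the inverse growth rate of $B'$, converts this into a uniform bound. Making the truncation level selection sharp enough to avoid a dependence on $\phi$ is the most delicate point of the argument.
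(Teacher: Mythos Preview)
Your overall architecture is reasonable, but the argument for the estimate $\|\chi_{\{\phi\ne 1\}}\psi_\phi\|_{L^\infty}\le C$ has a genuine gap. Testing the equation with $(\psi_\phi-K)^+$ and dropping the gradient term gives only the \emph{integral} inequality
\[
\int_{\{\psi_\phi>K\}} (\phi-1)^2 B'(\psi_\phi)(\psi_\phi-K)\,dx \le \int_{\{\psi_\phi>K\}} \rho\,(\psi_\phi-K)\,dx,
\]
which does \emph{not} imply the pointwise inequality $(\phi-1)^2 B'(\psi_\phi)\le\rho$, and no choice of $K$ depending only on $\|\rho\|_{L^\infty}$ and $B'$ forces the superlevel set to be empty: the factor $(\phi-1)^2$ may be arbitrarily small on a set of positive measure, so the growth $B'(+\infty)=+\infty$ buys you nothing here. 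You have correctly identified the obstacle but your proposed resolution does not work.

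The mechanism that actually yields the uniform bound is the \emph{sign} of the Boltzmann term, not its growth. Since $B'(s)\,s\ge 0$, the term $(\phi-1)^2 B'(\psi_\phi)(\psi_\phi-K)^+$ is nonnegative once $K\ge 0$, and you should drop \emph{it} (not the gradient term). Keeping the gradient term you get
\[
\ve_{\min}\int_\Omega |\nabla(\psi_\phi-K)^+|^2\,dx \le \|\rho\|_{L^\infty}\int_\Omega (\psi_\phi-K)^+\,dx,
\]
and now the standard Stampacchia level-set iteration (or comparison with the solution of the linear problem $-\nabla\cdot\ve(\phi)\nabla\bar\psi=\rho^+$, $\bar\psi=\psi_\infty^+$ on $\partial\Omega$, which is bounded uniformly in $\phi$ since the ellipticity constants of $\ve(\phi)$ are) gives a bound independent of $\phi$. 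This is what the paper means by ``a comparison argument using the growth property and convexity of $B$'': convexity with minimum at $0$ gives the favorable sign, and the comparison is really with the linear problem.

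A related secondary issue is the order of your steps. Under (A4) there is no growth restriction on $B'$, so you cannot justify the G\^ateaux differentiability (and hence the Euler--Lagrange equation) before you know $\psi_\phi$ is bounded on $\{\phi\ne 1\}$: the difference quotient of $(\phi-1)^2 B(u+t\eta)$ is not a priori dominated. The paper avoids this by obtaining the $L^\infty$ bound on $\{\phi\ne 1\}$ \emph{first}, directly from the variational comparison $E_\phi[\psi_\phi]\le E_\phi[\min(\psi_\phi,K)]$, and only then derives the weak equation (initially for $\eta\in H_0^1\cap L^\infty$, then by density). You should reorder accordingly.
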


\begin{proof}
This is similar to that of Theorem~2.1 in \cite{LiLiu_SIAP15}. 
First, note that $B \in C^2(\R)$ is convex and nonnegative. By 
direct methods in the calculus of variations, there exists a unique $\psi_\phi \in \calA$ 
that satisfies \reff{Ephipsiphi}. The minimum value is finite as it is bounded above by
$E_\phi [\psi_\infty]< \infty.$ Next, by a comparison argument using the 
growth property and convexity of $B$ (cf.\ the proof of Theorem~2.1 in \cite{LiLiu_SIAP15}), 
we have $ | \psi_\phi |  \le C $ a.e.\ on $ {\{\phi \ne  1\}} $ for some constant
$C > 0$ independent of $\phi.$ This is the first desired estimate. 
This estimate, together with the Lebesgue Dominated Convergence
Theorem, allows us to obtain \reff{psiphiweak} for $\eta\in H_0^1(\Omega)\cap L^\infty(\Omega).$
By approximation, \reff{psiphiweak} is true for all $\eta \in H^1_0(\Omega).$ 
Finally, the fact that $\psi_\phi \in L^\infty(\Omega)$ and 
the other two desired estimates follow from the regularity theory for elliptic problems;
cf.\ Theorem 8.3 and Theorem 8.16 in \cite{GilbargTrudinger83}, and 
the proof of Theorem~2.1 in \cite{LiLiu_SIAP15}. In particular, the estimate 
(10) in \cite{LiLiu_SIAP15} provides the bound $C (1 + \| \phi \|^2_{L^4(\Omega)})$
for $\| \psi_\phi \|_{L^\infty(\Omega)}.$ 
\end{proof}

The following theorem indicates that the electrostatic potential and 
electrostatic free energy are continuous with respect to the change of dielectric boundary:  

\begin{theorem}
\label{t:PBenergy}
Let $\phi_k \in L^4(\Omega)$ $(k  = 1, 2, \dots)$ and $\phi \in L^4(\Omega) $ be such that 
\begin{equation}
\label{L4L1}
\sup_{k \ge 1} \| \phi_k \|_{L^4(\Omega)} < \infty \quad
\mbox{and} \quad  
\phi_k \to \phi  \quad \mbox{in } L^1(\Omega). 
\end{equation}
Let $\psi_{\phi_k}\in \calA$ $(k = 1, 2, \dots)$ and $\psi_\phi \in \calA$
be the corresponding electrostatic potentials, i.e., 
\[
E_{\phi_k} [\psi_{\phi_k} ] = \min_{u \in \calA} E_{\phi_k} [ u ]  \quad  (k = 1, 2, \dots)
\quad \mbox{and} \quad 
E_{\phi} [\psi_\phi ] = \min_{u \in \calA} E_{\phi} [ u ], 
\]
respectively.  Then, $\psi_{\phi_k} \to \psi_\phi$ in $H^1(\Omega)$
and $E_{\phi_k} [\psi_{\phi_k}] \to E_\phi [\psi_\phi].$
\end{theorem}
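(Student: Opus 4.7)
The plan is to combine the uniform bounds from Theorem~\ref{t:phiPB} with a compactness plus uniqueness argument to get weak $H^1$ convergence, then upgrade to strong convergence via a standard monotonicity/ellipticity test, and finally handle the energies by term-by-term convergence.

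First, I would use Theorem~\ref{t:phiPB} to get uniform estimates on the electrostatic potentials. Since $\sup_k \|\phi_k\|_{L^4(\Omega)} < \infty$, we have uniform bounds
\[
\sup_k \|\psi_{\phi_k}\|_{H^1(\Omega)} \le C, \qquad \sup_k \|\psi_{\phi_k}\|_{L^\infty(\Omega)} \le C.
\]
By Rellich--Kondrachov, passing to a subsequence (not relabeled), $\psi_{\phi_k} \rightharpoonup \psi^*$ in $H^1(\Omega)$, $\psi_{\phi_k}\to\psi^*$ in $L^2(\Omega)$ and a.e.\ in $\Omega$, for some $\psi^*\in\calA$. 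Passing to a further subsequence, we may also assume $\phi_k\to\phi$ a.e.\ in $\Omega$. Continuity and boundedness of $\ve$ together with the Dominated Convergence Theorem give $\ve(\phi_k)\to\ve(\phi)$ in every $L^p(\Omega)$, $p<\infty$; similarly $(\phi_k-1)^2 \to (\phi-1)^2$ in $L^p(\Omega)$ for any $p<2$ (uniform integrability from the $L^4$ bound), and $B'(\psi_{\phi_k})\to B'(\psi^*)$ in every $L^p(\Omega)$ since $\psi_{\phi_k}$ is bounded uniformly in $L^\infty$ and $B'$ is continuous.

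Next, I would pass to the limit in the weak formulation \reff{psiphiweak} for $\psi_{\phi_k}$. For any $\eta\in H_0^1(\Omega)\cap L^\infty(\Omega)$, the term $\int_\Omega \ve(\phi_k)\nabla\psi_{\phi_k}\cdot\nabla\eta\,dx$ converges by the weak--strong pairing ($\nabla\psi_{\phi_k}\rightharpoonup\nabla\psi^*$ in $L^2$, while $\ve(\phi_k)\nabla\eta\to\ve(\phi)\nabla\eta$ in $L^2$), and the term $\int_\Omega (\phi_k-1)^2 B'(\psi_{\phi_k})\eta\,dx$ converges to $\int_\Omega (\phi-1)^2 B'(\psi^*)\eta\,dx$ since each factor converges strongly in a suitable $L^p$. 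Hence $\psi^*$ satisfies the weak Poisson--Boltzmann equation \reff{psiphiweak} for $\phi$; by the uniqueness asserted in Theorem~\ref{t:phiPB}, $\psi^*=\psi_\phi$, and the entire original sequence converges weakly in $H^1$ to $\psi_\phi$.

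To upgrade to strong $H^1$ convergence I would test the difference of the weak formulations for $\psi_{\phi_k}$ and $\psi_\phi$ against $\eta = \psi_{\phi_k}-\psi_\phi\in H_0^1(\Omega)$. This yields
\begin{align*}
&\int_\Omega \ve(\phi_k)|\nabla(\psi_{\phi_k}-\psi_\phi)|^2 dx
+ \int_\Omega (\phi_k-1)^2 \bigl[B'(\psi_{\phi_k})-B'(\psi_\phi)\bigr](\psi_{\phi_k}-\psi_\phi)\,dx \\
&\quad = -\int_\Omega [\ve(\phi_k)-\ve(\phi)]\nabla\psi_\phi\cdot\nabla(\psi_{\phi_k}-\psi_\phi)\,dx
 - \int_\Omega [(\phi_k-1)^2-(\phi-1)^2]B'(\psi_\phi)(\psi_{\phi_k}-\psi_\phi)\,dx.
\end{align*}
The two terms on the left are nonnegative: the first is bounded below by $\min(\ve_{\rm p},\ve_{\rm w})\|\nabla(\psi_{\phi_k}-\psi_\phi)\|_{L^2}^2$ by (A3), and the second is nonnegative by the monotonicity of $B'$ from (A4). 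The two terms on the right tend to $0$: the factors $\ve(\phi_k)-\ve(\phi)$ and $(\phi_k-1)^2-(\phi-1)^2$ converge to $0$ strongly in $L^2$, while $\nabla\psi_\phi$, $B'(\psi_\phi)$ are fixed in $L^2$ and $\nabla(\psi_{\phi_k}-\psi_\phi)$, $\psi_{\phi_k}-\psi_\phi$ are bounded in $L^2$ (indeed $\psi_{\phi_k}-\psi_\phi\to 0$ in $L^2$ strongly). Hence $\nabla\psi_{\phi_k}\to\nabla\psi_\phi$ in $L^2(\Omega)$ and $\psi_{\phi_k}\to\psi_\phi$ in $H^1(\Omega)$.

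Finally, the convergence of energies $E_{\phi_k}[\psi_{\phi_k}]\to E_\phi[\psi_\phi]$ is a straightforward consequence: expand \reff{Ephiu} and treat each term separately, using the strong $H^1$ and $L^\infty$ control of $\psi_{\phi_k}$ together with the $L^p$ convergence of $\ve(\phi_k)$ and $(\phi_k-1)^2$ and the continuity of $B$. The main obstacle is the passage to the limit in the nonlinear term $(\phi_k-1)^2 B'(\psi_{\phi_k})$, both inside the weak formulation and in the strong-convergence identity; it is precisely the uniform $L^\infty$ bound on $\psi_{\phi_k}$ given by Theorem~\ref{t:phiPB} that makes this possible. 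Since every subsequence extracted has a further subsequence converging to the same limit $\psi_\phi$, the full original sequence converges without subsequence extraction.
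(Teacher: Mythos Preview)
Your proof is correct and follows the paper's outline for the weak-limit identification and the energy convergence, but your argument for the \emph{strong} $H^1$ convergence is genuinely different from the paper's and arguably more direct.

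The paper does not subtract the two weak formulations. Instead it tests the equation for $\psi_{\phi_{k_j}}$ with $\psi_{\phi_{k_j}}-\psi_\infty$ and the equation for $\psi_\phi$ with $\psi_\phi-\psi_\infty$, and shows the convergence of weighted norms
\[
\int_\Omega \ve(\phi_{k_j})|\nabla\psi_{\phi_{k_j}}|^2\,dx \;\longrightarrow\; \int_\Omega \ve(\phi)|\nabla\psi_\phi|^2\,dx,
\]
then expands $\int_\Omega \ve(\phi_{k_j})|\nabla\psi_{\phi_{k_j}}-\nabla\psi_\phi|^2\,dx$ and concludes it vanishes. This route avoids using the monotonicity of $B'$ at the cost of a somewhat more delicate bookkeeping of H\"older exponents (the paper uses $L^{7/4}$--$L^{14/3}$ pairings). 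Your approach exploits the convexity of $B$ through the nonnegativity of $\int (\phi_k-1)^2[B'(\psi_{\phi_k})-B'(\psi_\phi)](\psi_{\phi_k}-\psi_\phi)$, which lets you drop that term outright and reduces the problem to showing that two ``coefficient-error'' integrals vanish; this is cleaner and is the standard monotone-operator argument. One small slip: you assert $(\phi_k-1)^2\to(\phi-1)^2$ strongly in $L^2$, but the $L^4$ bound on $\phi_k$ only gives this in $L^p$ for $p<2$ (as you correctly stated earlier); since $B'(\psi_\phi)\in L^\infty$ and $\psi_{\phi_k}-\psi_\phi$ is bounded in $L^\infty$, pairing in $L^{3/2}\times L^3$ suffices, so the conclusion is unaffected.
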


To prove this and other theorems, we need the following lemma which
holds true for any measurable subset $\Omega \subset \R^n$ of finite measure $|\Omega|:$  


\begin{lemma}
\label{l:Lq}
Let $1 < p < \infty$ and $\phi_k \in L^p(\Omega)$ $(k = 1, 2, \dots)$ be such that 
\begin{equation}
\label{pnormbound}
  \sup_{k \ge 1} \| \phi_k \|_{L^p(\Omega)} < \infty. 
\end{equation}
Let $\phi \in L^1(\Omega)$. Assume either $\phi_k \to \phi $ a.e.\ in $\Omega$ or 
$ \phi_k \to \phi$ in $L^1(\Omega).$ 
Then $\phi \in L^p(\Omega)$ and $\phi_k \to \phi$ in $L^q(\Omega)$ for any $q \in [1, p).$ 
\end{lemma}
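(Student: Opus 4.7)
The plan is to reduce both hypotheses to a single workhorse combining Fatou's lemma, equi-integrability, and either Vitali's convergence theorem or the $L^p$-interpolation inequality.

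First I would establish that $\phi\in L^p(\Omega)$. If $\phi_k\to\phi$ almost everywhere, this is an immediate application of Fatou's lemma to $|\phi_k|^p$, yielding $\|\phi\|_{L^p(\Omega)}\le \liminf_k \|\phi_k\|_{L^p(\Omega)}\le M$, where $M:=\sup_k\|\phi_k\|_{L^p(\Omega)}$. If instead $\phi_k\to\phi$ in $L^1(\Omega)$, I would first extract a subsequence $\phi_{k_j}$ converging to $\phi$ almost everywhere (standard consequence of $L^1$ convergence on a finite-measure set), and apply Fatou to that subsequence to again reach $\|\phi\|_{L^p(\Omega)}\le M$.

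Next I would prove $L^q$ convergence for $q\in[1,p)$. In the $L^1$-convergence case the cleanest route is the log-convex interpolation inequality
\begin{equation*}
\|\phi_k-\phi\|_{L^q(\Omega)}\le \|\phi_k-\phi\|_{L^1(\Omega)}^{\theta}\,\|\phi_k-\phi\|_{L^p(\Omega)}^{1-\theta},\qquad \theta=\frac{p-q}{q(p-1)}\in(0,1],
\end{equation*}
together with the uniform bound $\|\phi_k-\phi\|_{L^p(\Omega)}\le M+\|\phi\|_{L^p(\Omega)}\le 2M$; since $\theta>0$, the assumed $\|\phi_k-\phi\|_{L^1(\Omega)}\to 0$ forces $\|\phi_k-\phi\|_{L^q(\Omega)}\to 0$. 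In the almost-everywhere convergence case, interpolation is not directly available, so I would invoke Vitali's convergence theorem: H\"older's inequality applied pointwise in $k$ gives, for any measurable $E\subseteq\Omega$,
\begin{equation*}
\int_E |\phi_k|^q\,dx\le \|\phi_k\|_{L^p(\Omega)}^{q}\,|E|^{1-q/p}\le M^q|E|^{1-q/p},
\end{equation*}
so $\{|\phi_k|^q\}$ is uniformly integrable, hence (using $|\phi_k-\phi|^q\le 2^{q-1}(|\phi_k|^q+|\phi|^q)$) so is $\{|\phi_k-\phi|^q\}$. Since $|\phi_k-\phi|^q\to 0$ a.e.\ and $|\Omega|<\infty$, Vitali yields $\|\phi_k-\phi\|_{L^q(\Omega)}\to 0$.

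I do not anticipate a true obstacle here; the only subtle point is that in the $L^1$-convergence case one only has a.e.\ convergence along a subsequence, so to conclude $L^q$-convergence of the full sequence I would either rely on the interpolation inequality (which uses only the $L^1$ hypothesis and the uniform $L^p$ bound, so applies to the whole sequence) or, equivalently, run the standard subsequence-of-a-subsequence argument: every subsequence of $\{\phi_k\}$ has a further subsequence converging a.e.\ to $\phi$, to which the Vitali argument applies, hence the full sequence converges in $L^q(\Omega)$. The interpolation route is more direct and is what I would use in the writeup.
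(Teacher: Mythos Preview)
Your proposal is correct. The overall structure---Fatou for membership in $L^p$, then a convergence argument for $L^q$---matches the paper, but the tools you reach for in the second step differ from the paper's. For the almost-everywhere case the paper argues by hand via Egoroff's theorem: choose $A$ with $|A|<\varepsilon$ on whose complement $\phi_k\to\phi$ uniformly, then bound $\int_A|\phi_k-\phi|^q$ by H\"older. Your Vitali route is essentially the abstract packaging of the same idea (Vitali on a finite-measure space is usually proved through Egoroff), so this difference is cosmetic. For the $L^1$-convergence case, however, the paper does not use interpolation; it instead runs the subsequence-of-a-subsequence argument you mention as an alternative, reducing to the a.e.\ case. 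Your interpolation inequality $\|\phi_k-\phi\|_{L^q}\le \|\phi_k-\phi\|_{L^1}^{\theta}\|\phi_k-\phi\|_{L^p}^{1-\theta}$ is a genuinely cleaner and more direct argument here, since it applies to the full sequence in one line and avoids any subsequence extraction. Both approaches are standard; yours is slightly more efficient in the $L^1$ branch.
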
 


\begin{proof}
Assume $\phi_k \to \phi$ a.e.\ in $\Omega$. Fatou's lemma then leads to  
\[
\int_\Omega |\phi|^p dx \le \liminf_{k\to\infty} \int_\Omega |\phi_{k}|^p dx < \infty. 
\]
Hence  $\phi \in L^p(\Omega).$ 
Let $\ve > 0.$ Egoroff's Theorem implies that there exists a measurable subset $A \subseteq \Omega$
such that $|A| < \ve$ and $\phi_k \to \phi$ uniformly on $A^c = \Omega \setminus A $. 
Therefore, it follows from  H\"older's inequality and \reff{pnormbound} that for any $q \in [1, p)$
\begin{align*}
\limsup_{k\to \infty} \int_\Omega | \phi_k - \phi |^q dx 
& = \limsup_{k\to \infty} \left[ \int_{A} | \phi_k - \phi |^q dx + 
\int_{A^c} | \phi_k - \phi |^q dx \right] 
\\
& \le \limsup_{k\to \infty}  |A|^{(p-q)/p} \| \phi_k \|_{L^p(\Omega)}^q 
+ \limsup_{k \to \infty} \int_{A^c } | \phi_k - \phi |^q dx\\
& \le \ve^{(p-q)/p} \left( \sup_{k \ge 1} \| \phi_k \|_{L^p(\Omega)}^q \right).  
\end{align*}
Hence $\phi_k \to \phi$ in $L^q(\Omega).$ 

Assume now $\phi_k \to \phi$ in $L^1(\Omega)$. Then there exists a subsequence of $\{ \phi_k \}$
that converges to $\phi$ a.e.\ in $\Omega$. Applying Fatou's lemma to this subsequence, 
we also get $ \phi \in L^p(\Omega).$ 
Let $1 < q < p.$ Every subsequence of
$\{ \phi_k \}$ has a further subsequence that converges to $\phi$  a.e.\ in $\Omega$,
and hence, as proved above, converges to $\phi$ in $L^q(\Omega)$. 
 Thus $\phi_k\to \phi$ in $L^q(\Omega)$. 
\end{proof}

We are now ready to prove Theorem~\ref{t:PBenergy}.
We use the symbol $\rightharpoonup$ to denote the weak convergence:   

\begin{proof}[Proof of Theorem~\ref{t:PBenergy}]
For notational convenience, let us write $\psi_k = \psi_{\phi_k}$ and $\psi = \psi_\phi.$ 
We first prove that $\psi_k \to \psi$ in $H^1(\Omega)$. It suffices to prove that 
any subsequence of $\{ \psi_k \}$ has a further subsequence that converges to $\psi$
in $H^1(\Omega).$ 

Note by Theorem~\ref{t:phiPB} and \reff{L4L1} that 
\begin{align}
\label{PBk}
&\int_\Omega \left[ \ve(\phi_k) \nabla \psi_{k} \cdot\nabla \eta  
+ ( \phi_k - 1)^2 B'(\psi_{k}) \eta \right] dx 
= \int_\Omega \rho \eta \, dx  \qquad \forall \eta \in H^1_0(\Omega)
\quad \forall k \ge 1, 
\\
\label{PBphi}
&\int_\Omega \left[ \ve(\phi) \nabla \psi  \cdot\nabla \eta  
+ ( \phi - 1)^2 B'(\psi) \eta \right] dx 
= \int_\Omega \rho \eta \, dx  \qquad \forall \eta \in H^1_0(\Omega), 
\\
\label{supsup}
& \sup_{k \ge 1} \left( \| \psi_{k} \|_{H^1(\Omega)} +\| \psi_{k} \|_{L^\infty(\Omega)}\right)<\infty  
\quad \mbox{and} \quad \psi_\phi \in L^\infty(\Omega). 
\end{align}
By \reff{L4L1} and \reff{supsup}, any subsequence of $\{\psi_k\}$ has a further subsequence 
$\{\psi_{k_j} \}$ that converges to some $\hat{\psi} \in H^1(\Omega)$ 
weakly in $H^1(\Omega)$, strongly in $L^2(\Omega),$ and a.e.\ in $\Omega$; 
and the corresponding sequence $\{ \phi_{k_j} \}$ converges to $\phi$ a.e.\ in $\Omega$.  
We prove that $\hat{\psi} = \psi $ in $H^1(\Omega)$ and $\psi_{k_j} \to \psi$ strongly 
in $H^1(\Omega).$  


Since $\calA$ is convex and strongly closed in $H^1(\Omega),$  it is sequentially 
weakly closed. Hence $\hat{\psi} \in \calA.$  
Since $\psi_{k_j} \to \hat{\psi}$ a.e.\ in $\Omega$, 
 by \reff{supsup}, $\hat{\psi} \in L^\infty(\Omega).$ 
By Lemma~\ref{l:Lq}, $\phi_{k_j} \to \phi$  in $L^q(\Omega)$ for any $q \in [1, 4).$
Hence, $\ve(\phi_{k_j}) \to \ve( \phi ) $ in $L^2(\Omega).$ 
Similarly, 
\begin{equation}
\label{L3over2}
 (\phi_{k_j} -1)^2 \to (\phi - 1 )^2  \quad \mbox{in } L^{3/2}(\Omega). 
\end{equation}
By the compact embedding $H^1(\Omega) \hookrightarrow L^3(\Omega)$
and the weak convergence $\psi_{k_j} \rightharpoonup \hat{\psi}$ in $H^1(\Omega)$, 
we have that $\psi_{k_j} \to \hat{\psi}$ in $L^3(\Omega),$ and hence that 
\begin{equation}
\label{BprimeL3}
B'(\psi_{k_j}) \to B'(\hat{\psi}) \quad \mbox{in } L^3(\Omega). 
\end{equation}
Therefore, replacing $\phi_k$ and $\psi_k$ in \reff{PBk} by $\phi_{k_j}$ and $\psi_{k_j}$, 
respectively, and then sending $j \to \infty$, we obtain for any $\eta \in C_c^1(\Omega)$  that 
\[
\int_\Omega \left[ \ve(\phi) \nabla \hat{\psi} \cdot\nabla \eta  
+ (\phi-1)^2 B'(\hat{\psi} ) \eta \right] dx = \int_\Omega \rho\eta \, dx. 
\]
Since $C_c^1(\Omega)$ is dense in $H_0^1(\Omega),$
this identity holds true also for any $\eta \in H_0^1(\Omega)$. 
This and \reff{PBphi}, together with the uniqueness of weak solution established in 
Theorem~\ref{t:phiPB}, imply that  $\hat{\psi} =\psi$ in $H^1(\Omega).$ 

We now prove $\psi_{k_j} \to \psi$ in $H^1(\Omega).$ 
By our assumptions on $\ve$, the fact that $\psi_{k_j} - \psi\in H_0^1(\Omega)$
$(j = 1, 2, \dots)$,  and Poincar\'e's inequality, it suffices to prove 
\begin{equation}
\label{H1strong}
\lim_{j \to \infty} \int_\Omega \ve(\phi_{k_j } ) | \nabla \psi_{k_j}  - \nabla \psi|^2 dx = 0. 
\end{equation}

By  \reff{L4L1} and Lemma~\ref{l:Lq}, we have $\phi_{k_j} \to \phi$ in $L^{7/2}(\Omega)$
and hence $(\phi_{k_j} - 1)^2 \to (\phi - 1)^2$ in $L^{7/4}(\Omega).$ 
Similarly, by the convergence $\psi_{k_j} \to \psi$ in $L^2(\Omega)$, 
the embedding $H^1(\Omega) \hookrightarrow L^{14/3}(\Omega)$, \reff{supsup}, 
and Lemma~\ref{l:Lq}, we have $\psi_{k_j} \to \psi$  and hence $B(\psi_{k_j}) \to B(\psi)$
in $L^{14/3}(\Omega).$ Consequently, by H\"older's inequality, 
\begin{equation*}
\lim_{j\to \infty} \int_\Omega (\phi_{k_j} - 1 )^2 B(\psi_{k_j}) 
( \psi_{k_j} - \psi_\infty )  \, dx 
= \int_\Omega (\phi - 1 )^2 B(\psi) ( \psi - \psi_\infty) \, dx. 
\end{equation*}
Setting $\eta = \psi_{k_j} - \psi_\infty \in H_0^1(\Omega)$ in \reff{PBk} and \reff{PBphi}, 
we then obtain 
\begin{align}
\label{psikpsik}
& \lim_{j\to \infty} \int_\Omega \ve(\phi_{k_j})  |\nabla \psi_{k_j} |^2 dx 
\nonumber \\
& \qquad = \lim_{j \to \infty} 
\int_\Omega \left[ \ve(\phi_{k_j})  \nabla \psi_{k_j} \cdot \nabla \psi_\infty 
+  \ve(\phi_{k_j}) \nabla \psi_{k_j} \cdot \nabla (\psi_{k_j} - \psi_\infty) \right] dx 
\nonumber \\
& \qquad = \lim_{j \to \infty} 
\int_\Omega \left[ \ve(\phi_{k_j}) \nabla \psi_{k_j} \cdot \nabla \psi_\infty 
+ \rho ( \psi_{k_j}- \psi_\infty ) - (\phi_{k_j} -1)^2 B'(\psi_{k_j})  
( \psi_{k_j} - \psi_\infty ) \right]  dx 
\nonumber \\
& \qquad = \int_\Omega \left[ \ve(\phi) \nabla \psi \cdot \nabla \psi_\infty 
+\rho ( \psi - \psi_\infty ) - (\phi-1)^2 B'(\psi)  ( \psi - \psi_\infty ) \right]  dx \nonumber \\
& \qquad 
= \int_\Omega \left[ \ve(\phi)  \nabla \psi \cdot \nabla \psi_\infty 
+  \ve(\phi) \nabla \psi \cdot \nabla (\psi - \psi_\infty) \right] dx 
\nonumber \\
&\qquad  = \int_\Omega \ve(\phi) |\nabla \psi |^2 dx.  
\end{align} 
Since $\phi_{k_j} \to \phi$ a.e.\ in $\Omega,$ the Lebesgue Dominated Convergence Theorem implies that
\begin{equation}
\label{vekpsi2}
\lim_{j \to \infty} \int_\Omega \ve(\phi_{k_j}) | \nabla \psi|^2 dx 
= \int_\Omega \ve(\phi) | \nabla \psi|^2 dx. 
\end{equation}
It now follows from \reff{psikpsik}, \reff{vekpsi2}, and the fact that 
$\ve(\phi_{k_j}) \to \ve(\phi)$ in $L^2(\Omega)$ and 
$\psi_{k_j} \rightharpoonup \psi$ in $H^1(\Omega)$ that 
\begin{align*}
& \lim_{j \to \infty} \int_\Omega \ve(\phi_{k_j}) | \nabla \psi_{k_j} - \nabla \psi|^2 dx 
\\
&\quad 
= \lim_{j \to \infty} \int_\Omega \left[ \ve(\phi_{k_j}) | \nabla \psi_{k_j} |^2 
- 2 \ve(\phi_{k_j}) \nabla \psi_{k_j} \cdot \nabla \psi + \ve(\phi_{k_j}) | \nabla \psi |^2 \right] dx
\\
&\quad 
= \int_\Omega \left[ \ve(\phi) | \nabla \psi |^2 - 2 \ve(\phi) \nabla \psi \cdot \nabla \psi
+ \ve(\phi) | \nabla \psi |^2 \right] dx
\\
& \quad 
= 0,  
\end{align*}
leading to \reff{H1strong}. 

We finally prove the energy convergence $E_{\phi_k}[\psi_k] \to E_\phi [\psi].$ 
Since $\phi_k \to \phi$ in $L^1(\Omega)$ and $\psi_k \to \psi$ in $H^1(\Omega)$, 
 any subsequence of $\{ \phi_k \}$ and the corresponding
subsequence of $\{ \psi_k \}$ have further subsequneces $\{ \phi_{k_j} \}$ and 
$\{ \psi_{k_j} \}$, respectively, such that $\phi_{k_j} \to \phi $ a.e.\ in $\Omega$, 
and $\psi_{k_j} \to \psi$ in $H^1(\Omega)$ and a.e.\ in $\Omega.$ 
By \reff{L3over2}, and \reff{BprimeL3} with $\psi$ replacing $\hat{\psi}$, we have
\begin{equation}
\label{fB}
\lim_{k \to \infty}
\int_\Omega \left[ -\rho \psi_{k_j} + (\phi_{k_j} -1)^2 B(\psi_{k_j} ) \right] dx
= \int_\Omega \left[ -\rho \psi + (\phi-1)^2 B(\psi) \right] dx. 
\end{equation}
This and \reff{psikpsik}  implies that $ E_{\phi_{k_j}} [\psi_{k_j}] \to  E_{\phi}[\psi]$. 
Hence $E_{\phi_k}[\psi_k] \to E_\phi [\psi].$ 
\end{proof}

We now state and prove the last result in this section: the convergence to the sharp-interface limit of
phase-field electrostatic boundary forces, in terms of the weak convergence 
of the corresponding stress fields; cf.~Lemma~\ref{l:Stress}. 
We recall that $ f_{0,{\rm ele}}[\partial G]$ is defined in \reff{f0_ele}.

\begin{theorem}[Convergence of dielectric boundary force]
\label{th:f_ele-conv}
Let $\phi_k \in L^4(\Omega)$ $(k = 1, 2, \dots)$ and $\phi \in L^1(\Omega)$ be such that 
\begin{equation}
\label{threephik}
\sup_{k \ge 1} \| \phi_k \|_{L^4(\Omega)} < \infty
\quad  \mbox{and}  \quad 
\phi_k \to \phi  \quad \mbox{a.e.\ in } \Omega. 
\end{equation}
We have for any $ V \in C_c^1(\Omega, \R^3)$ that 
\begin{align}\label{Fgeneral}
	\lim_{k\to \infty} \int_\Omega \bigl[  T_{\rm ele}(\phi_k) 
	: \nabla V -\rho \nabla \psi_{\phi_k} \cdot V \bigr]   dx 
	= \int_\Omega \bigl[ T_{\rm ele}(\phi) : \nabla V -\rho \nabla \psi_{\phi} \cdot V \bigr] dx. 
\end{align}
If, in addition,  $\phi = \chi_G$ for some open subset $G$  of $\Omega$ with a $C^2$
boundary $\partial G$ and the closure $\overline{G} \subset \Omega$, then this limit is 
\begin{equation}
\label{Fspecial}
\int_\Omega \bigl[  T_{\rm ele}(\chi_G ) : \nabla V -\rho \nabla \psi_{\chi_G} \cdot V \bigr] dx  
= - \int_\Omega f_{0, {\rm ele}}[\partial G] \cdot V \, dS.    
\end{equation}

\end{theorem}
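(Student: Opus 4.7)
My plan is to prove \reff{Fgeneral} by first upgrading the a.e.\ convergence $\phi_k\to\phi$ to $L^q(\Omega)$-convergence for all $q\in[1,4)$, invoking Theorem~\ref{t:PBenergy} to get strong $H^1$-convergence of the potentials, and then passing to the limit term by term in $T_{\rm ele}(\phi_k):\nabla V$ and $\rho\nabla\psi_{\phi_k}\cdot V$. For the sharp-interface identity \reff{Fspecial}, I will integrate by parts on $G$ and $G^c$ separately, noting that $\nabla\chi_G\equiv 0$ inside each region so that Lemma~\ref{l:Stress} collapses to $\nabla\cdot T_{\rm ele}(\chi_G)=-\rho\nabla\psi_{\chi_G}$, and then evaluate the resulting boundary term from the jump of $T_{\rm ele}(\chi_G)\nu$ across $\partial G$ using the transmission conditions \reff{psiGcont}--\reff{psiGgrad}.

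For \reff{Fgeneral}, Lemma~\ref{l:Lq} applied with $p=4$ first gives $\phi\in L^4(\Omega)$ and $\phi_k\to\phi$ in every $L^q(\Omega)$ with $q\in[1,4)$. In particular $\phi_k\to\phi$ in $L^1(\Omega)$, so Theorem~\ref{t:PBenergy} applies and yields $\psi_{\phi_k}\to\psi_\phi$ strongly in $H^1(\Omega)$, hence (along a subsequence) a.e.\ in $\Omega$. The term $\rho\nabla\psi_{\phi_k}\cdot V$ then converges in $L^1(\Omega)$ by Cauchy--Schwarz. For the contribution $\ve(\phi_k)(\nabla\psi_{\phi_k}\otimes\nabla\psi_{\phi_k}):\nabla V$ I would write the difference as a sum of three pieces, controlling $\ve(\phi_k)-\ve(\phi)$ by dominated convergence (since $\ve$ is uniformly bounded on $\R$ and $\phi_k\to\phi$ a.e.) and $\nabla\psi_{\phi_k}-\nabla\psi_\phi$ by the strong $L^2$-convergence; the same strategy handles $\ve(\phi_k)|\nabla\psi_{\phi_k}|^2$. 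For $(\phi_k-1)^2 B(\psi_{\phi_k})$ I would invoke the uniform $L^\infty$ bound $\|\psi_{\phi_k}\|_{L^\infty(\Omega)}\leq C(1+\|\phi_k\|^2_{L^4(\Omega)})$ from Theorem~\ref{t:phiPB}, which together with $B\in C^2(\R)$ gives a uniform $L^\infty$ bound on $B(\psi_{\phi_k})$, while $(\phi_k-1)^2\to(\phi-1)^2$ in $L^{3/2}(\Omega)$ by Lemma~\ref{l:Lq}, so dominated convergence closes the argument.

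For \reff{Fspecial} set $\phi=\chi_G$ and write $\psi=\psi_{\chi_G}$. Since $\chi_G$ is constant in each of $G$ and $G^c$, Lemma~\ref{l:Stress} (applied locally where $\chi_G\in W^{1,\infty}$) gives $\nabla\cdot T_{\rm ele}(\chi_G)=-\rho\nabla\psi$ in $G\cup G^c$; the needed regularity $\psi|_G\in H^2(G)$, $\psi|_{G^c}\in H^2(G^c)$ is recorded just before \reff{psiGp}. Integrating by parts on each side of $\partial G$ with outer normal $\nu$ (pointing from $G$ into $G^c$) gives
\begin{equation*}
\int_\Omega\left[T_{\rm ele}(\chi_G):\nabla V-\rho\nabla\psi\cdot V\right]dx
=\int_{\partial G}\bigl[T_{\rm ele}(\chi_G)|_G-T_{\rm ele}(\chi_G)|_{G^c}\bigr]\nu\cdot V\,dS.
\end{equation*}
To identify the integrand I would decompose $\nabla\psi$ on each side of $\partial G$ into its normal part (whose flux $D:=\ve(\chi_G)\nabla\psi\cdot\nu$ is common to both sides by \reff{psiGgrad}) and its tangential part $(I-\nu\otimes\nu)\nabla\psi$ (continuous across $\partial G$ by \reff{psiGcont}). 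The tangential-normal cross terms then cancel, and what remains is precisely
\begin{equation*}
\Bigl[\tfrac{1}{2}\bigl(\tfrac{1}{\ve_{\rm p}}-\tfrac{1}{\ve_{\rm w}}\bigr)D^2+\tfrac{\ve_{\rm w}-\ve_{\rm p}}{2}\bigl|(I-\nu\otimes\nu)\nabla\psi\bigr|^2+B(\psi)\Bigr]\nu=-f_{0,{\rm ele}}[\partial G],
\end{equation*}
yielding \reff{Fspecial}.

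The main obstacles are two. First, in \reff{Fgeneral}, handling the nonlinearity $B(\psi_{\phi_k})$ genuinely requires the $L^\infty$-bound on $\psi_{\phi_k}$, not merely $H^1$-convergence, because $B$ is only assumed $C^2$ with superlinear growth; this is where the quantitative bounds in Theorem~\ref{t:phiPB} become essential. Second, in \reff{Fspecial} the algebraic bookkeeping of the jump must be done carefully so that the flux condition \reff{psiGgrad} is used to collapse the mismatched coefficients $\ve_{\rm p}$ and $\ve_{\rm w}$ into the single factor $\frac{1}{\ve_{\rm p}}-\frac{1}{\ve_{\rm w}}$ appearing in $f_{0,{\rm ele}}$. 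The rest of the argument is routine strong/weak convergence combined with the divergence theorem.
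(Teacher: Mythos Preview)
Your proposal is correct and follows essentially the same route as the paper. For \reff{Fgeneral} the paper likewise invokes Lemma~\ref{l:Lq}, then Theorem~\ref{t:PBenergy} for strong $H^1$-convergence of $\psi_{\phi_k}$, and passes to the limit term by term exactly as you outline; for \reff{Fspecial} the paper performs the integration by parts over $G$ and $G^c$ directly in coordinates rather than citing Lemma~\ref{l:Stress} for the interior identity $\nabla\cdot T_{\rm ele}(\chi_G)=-\rho\nabla\psi$, but the resulting boundary-jump computation and the normal/tangential decomposition using \reff{psiGcont}--\reff{psiGgrad} are identical to yours.
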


\begin{proof}
We first note that, by Lemma~\ref{l:Lq}, $\phi \in L^4(\Omega)$ and $\phi_k \to \phi$
in $L^q(\Omega)$ for any $q\in [1, 4).$  Let us denote 
$\psi_k = \psi_{\phi_k}$ $(k \ge 1)$ and $\psi = \psi_\phi.$ Since $\ve$ is a bounded 
function and $\psi_k \to \psi$ in $H^1(\Omega)$ by Theorem~\ref{t:PBenergy}, we have 
\begin{align*}
&\lim_{k \to \infty}  \int_\Omega    \ve(\phi_k) 
\left[ ( \nabla \psi_k - \nabla \psi)  \otimes ( \nabla \psi_k - \nabla \psi)
\right.
\\
&\qquad
\left.   + 
 \nabla \psi \otimes ( \nabla \psi_k - \nabla \psi) + 
( \nabla \psi_k  - \nabla \psi) \otimes  \nabla \psi \right] : \nabla V \, dx  = 0. 
\end{align*}
Since $\phi_k \to \phi$ a.e.\ in $\Omega$, the Lebesgue Dominated Convergence Theorem
implies that 
\[
\lim_{k \to \infty} \int_\Omega 
 \ve(\phi_k) \nabla \psi  \otimes \nabla \psi :  \nabla V \, dx
= \int_\Omega 
 \ve(\phi) \nabla \psi \otimes \nabla \psi : \nabla V \, dx. 
\]
Therefore, 
\begin{align}
\label{q1}
&\lim_{k\to \infty} \int_\Omega \ve(\phi_k) \nabla \psi_k \otimes \nabla \psi_k : \nabla V \, dx  
\nonumber \\
& \qquad 
= \lim_{k \to \infty}  \int_\Omega    \ve(\phi_k) 
\left[ ( \nabla \psi_k - \nabla \psi)  \otimes ( \nabla \psi_k - \nabla \psi)
 + \nabla \psi \otimes ( \nabla \psi_k - \nabla \psi)  
\right.
\nonumber \\
&\qquad \qquad
\left.   + 
( \nabla \psi_k  - \nabla \psi) \otimes  \nabla \psi + \nabla \psi
\otimes \nabla \psi \right] : \nabla V \, dx  
\nonumber \\
&\qquad 
= \int_\Omega \ve(\phi) \nabla \psi \otimes \nabla \psi : V \, dx. 
\end{align}
Similarly, 
\begin{equation}
\label{q2}
\lim_{k\to \infty} \int_\Omega \ve(\phi_k) |\nabla \psi_k|^2 \nabla \cdot V \, dx 
= \int_\Omega \ve(\phi) |\nabla \psi|^2 \nabla \cdot V \, dx.  
\end{equation}
As in the proof of Theorem~\ref{t:PBenergy}, we have again by the convergence 
$\psi_k \to \psi$ in $H^1(\Omega)$ that 
\begin{align}
\label{q3}
& \lim_{k \to \infty} 
\int_\Omega \left[ (\phi_k-1)^2 B(\psi_k) \nabla \cdot V +\rho \nabla \psi_k \cdot V \right] \, dx
= \int_\Omega \left[ (\phi-1)^2 B(\psi) \nabla \cdot V + \rho \nabla \psi \cdot V \right] \, dx. 
\end{align}
It now follows from the definition of $T_{\rm ele}$ (cf.\ \reff{stressphielec})
and \reff{q1}--\reff{q3} that 
\begin{align*}
&\lim_{k\to \infty} \int_\Omega \bigl[ T_{\rm ele}(\phi_k) : \nabla V - \rho \nabla \psi_{\phi_k} 
\cdot V \bigr]  dx 
\\
&\qquad 
= \lim_{k\to \infty} 
\int_\Omega \biggl\{  \ve(\phi_k) \nabla \psi_k \otimes \nabla \psi_k : \nabla V 
- \left[ \frac12 \ve(\phi_k) | \nabla \psi_k |^2 + (\phi_k-1)^2 B(\psi_k) \right] \nabla \cdot V
\\
&\qquad \qquad \quad 
- \rho \nabla \psi_k \cdot V \biggr\}  dx
\\
&\qquad 
= \int_\Omega \biggl\{   \ve(\phi) \nabla \psi \otimes \nabla \psi : \nabla V 
- \left[  \frac12 \ve(\phi) | \nabla \psi |^2 + (\phi-1)^2 B(\psi) \right]  \nabla \cdot V
- \rho \nabla \psi \cdot V \biggr\}   dx
\\
&\qquad 
= \int_\Omega \bigl[  T_{\rm ele}(\phi) : \nabla V - \rho \nabla \psi \cdot V \bigr] dx.  
\end{align*}
This is exactly \reff{Fgeneral}, since $\psi = \psi_\phi.$  

We now prove \reff{Fspecial}. Denote again $\psi = \psi_\phi = \psi_{\chi_G} \in \calA.$ 
Denote also by  $V_i$ and $\nu_i$ $(i = 1, 2, 3)$ the components of $V$ and
$\nu$, respectively. Notice that the unit normal $\nu$ 
points from $G $ to $G^c=\Omega \setminus G$.
Using the conventional summation notation, we have by integration by parts that 
\begin{align}
\label{DBFlong}
&\int_\Omega \left[ T_{\rm ele}(\chi_G) : \nabla V  -\rho \nabla \psi_{\chi_G} \cdot V  
\right]  dx 
\nonumber 
\\
& \qquad 
= \int_\Omega \left\{  \ve(\chi_G) \nabla \psi \otimes \nabla \psi : \nabla V
- \left[ \frac{\ve(\chi_G)}{2} | \nabla \psi |^2 + \chi_{G^c} B(\psi) \right]  
 \nabla \cdot V -\rho \nabla \psi \cdot V \right\} dx
\nonumber 
\\
& \qquad 
= \int_G  \left(  \ve_{\rm p} \partial_i  \psi \partial_j \psi  \partial_j V_i
- \frac{\ve_{\rm p}}{2} \partial_i \psi \partial_i \psi \partial_j V_j 
-\rho  \nabla \psi \cdot V  \right)  dx
\nonumber 
\\
&\qquad \qquad 
+  \int_{G^c}  \left[ \ve_{\rm w} \partial_i  \psi \partial_j \psi  \partial_j V_i
- \frac{\ve_{\rm w}}{2} \partial_i \psi \partial_i \psi \partial_j V_j 
	- B(\psi) \partial_j V_j -\rho  \nabla \psi \cdot V   \right] dx
\nonumber 
\\
& \qquad 
= \int_G  \left( -\ve_{\rm p} \partial_{ij}  \psi \partial_j \psi  V_i
 -\ve_{\rm p} \partial_i \psi \partial_{jj} \psi  V_i
+ \ve_{\rm p} \partial_{ij}  \psi \partial_i \psi  V_j -\rho \nabla \psi \cdot V \right)  \, dx
\nonumber 
\\
&\qquad \qquad 
+ \int_{\partial G} \left(   \ve_{\rm p} \partial_i \psi|_G \partial_j \psi|_G V_i \nu_j 
- \frac{\ve_{\rm p}}{2} \partial_i \psi|_G \partial_i \psi|_G V_j \nu_j \right)  dS
\nonumber 
\\
&\qquad \qquad 
+ \int_{G^c}  \left[ - \ve_{\rm w} \partial_{ij}  \psi \partial_j \psi  V_i
- \ve_{\rm w} \partial_i \psi \partial_{jj} \psi V_i
+ \ve_{\rm w} \partial_{ij}  \psi \partial_i \psi  V_j 
+B'(\psi) \partial_j \psi V_j -\rho \nabla \psi \cdot V  \right] \, dx
\nonumber 
\\
&\qquad \qquad 
+ \int_{\partial G} \left[ - \ve_{\rm w} \partial_i \psi|_{G^c} \partial_j \psi|_{G^c} V_i \nu_j 
+ \frac{\ve_{\rm w}}{2} \partial_i \psi|_{G^c} \partial_i \psi|_{G^c} V_j \nu_j 
+ B(\psi) V_j \nu_j \right] dS
\nonumber 
\\
&\qquad 
= \int_G  ( - \ve_{\rm p} \Delta \psi -\rho ) \nabla \psi \cdot V \, dx
+  \int_{G^c}   \left[ -\ve_{\rm w} \Delta \psi + B'(\psi) -\rho \right] \nabla \psi \cdot V \, dx
\nonumber 
\\
&\qquad \qquad 
+ \int_{\partial G} \biggl\{   
\ve_p ( \nabla \psi \cdot \nu ) \nabla \psi|_{G} \cdot V 
- \ve_{\rm w}  ( \nabla \psi \cdot \nu ) \nabla \psi|_{G^c} \cdot V 
\nonumber 
\\
&\qquad \qquad \qquad
+ \left[  \frac{\ve_{\rm w}}{2} | \nabla \psi|_{G^c} |^2 
- \frac{\ve_{\rm p}}{2} | \nabla \psi|_{G} |^2  + B(\psi)  \right] V \cdot \nu  \biggr\}   dS
\nonumber 
\\
&\qquad 
= \int_{\partial G} \biggl\{  \ve(\chi_G)  (\nabla \psi \cdot \nu ) 
(\nabla \psi|_{G} - \nabla \psi|_{G^c} ) \cdot V
\nonumber 
\\
&\qquad \qquad 
+ \left[  \frac{\ve_{\rm w}}{2} | \nabla \psi|_{G^c} |^2 
- \frac{\ve_{\rm p}}{2} | \nabla \psi|_{G} |^2  + B(\psi)  \right] V \cdot \nu  \biggr\} dS, 
\end{align}
where in the last step we used \reff{psiGp}--\reff{psiGgrad}. 

The gradient $\nabla \psi$ restricted onto $\partial G$ 
from either $G$ or $G^c$ has the decomposition
\[
\nabla \psi = (\nabla \psi \cdot \nu ) \nu + (I - \nu \otimes \nu)
\nabla \psi \qquad \mbox{on } \partial G.
\]
Since $\psi$ is continuous across $\partial G$ (cf.\ \reff{psiGcont}), 
the tangential derivatives of $\psi$, and hence $(I-\nu \otimes \nu) \nabla \psi,$
are continuous across the interface $\partial G$: 
\[
(I - \nu\otimes\nu) \nabla \psi|_G = (I - \nu\otimes\nu) \nabla \psi|_{G^c} 
\qquad \mbox{on } \partial G. 
\]
Thus
\[
\nabla \psi|_{G} - \nabla \psi|_{G^c} 
= (( \nabla \psi|_{G} - \nabla \psi|_{G^c} ) \cdot \nu ) \nu
 \qquad \mbox{on } \partial G.   
\]
Moreover, restricted onto $\partial G$ from either $G$ or $G^c$, 
\[
|\nabla \psi|^2 = | (\nabla \psi \cdot \nu ) \nu  + (I - \nu\otimes\nu) \nabla \psi|^2 = 
 | \nabla \psi \cdot \nu |^2  + |(I - \nu\otimes\nu) \nabla \psi|^2. 
\]
Therefore, 
\begin{align*}
&  \ve(\chi_G)  ( \nabla \psi \cdot \nu ) (\nabla \psi|_{G} - \nabla \psi|_{G^c} ) \cdot V
+ \left[  \frac{\ve_{\rm w}}{2} | \nabla \psi|_{G^c} |^2 
- \frac{\ve_{\rm p}}{2} | \nabla \psi|_{G} |^2  + B(\psi)  \right]  V \cdot \nu 
\\
&\qquad 
= \left[ \ve_{\rm p} | \nabla \psi|_{G }  \cdot \nu |^2 
- \ve_{\rm w} | \nabla \psi|_{G^{c}}  \cdot \nu |^2 + \frac{\ve_{\rm w}}{2} | \nabla \psi|_{G^c} |^2 
- \frac{\ve_{\rm p}}{2} | \nabla \psi|_{G} |^2 + B(\psi) \right]   V \cdot \nu 
\\
&\qquad 
= \left[ \frac{\ve_{\rm p}}{2}  | \nabla \psi|_{G }  \cdot \nu |^2 
 - \frac{\ve_{\rm w}}{2}  | \nabla \psi|_{G^c}  \cdot \nu |^2 +  \frac12 ( \ve_{\rm w} - \ve_{\rm p} ) 
| (I - \nu\otimes\nu) \nabla \psi |^2 + B(\psi) \right]   V \cdot \nu 
\\
&\qquad 
= \left[ \frac12 \left( \frac{1}{\ve_{\rm p}}  - \frac{1}{\ve_{\rm w}} \right)
 | \ve(\chi_G) \nabla \psi \cdot \nu |^2 +  \frac12 ( \ve_{\rm w} - \ve_{\rm p} ) 
| (I - \nu\otimes\nu) \nabla \psi |^2 + B(\psi) \right]   V \cdot \nu 
\\
&\qquad 
= - f_{0, {\rm ele}} [\partial G] \cdot V. 
\end{align*}
With our notation $\psi = \psi_{\chi_G},$ this and \reff{DBFlong} imply \reff{Fspecial}. 
\end{proof}

\section{Free-Energy Convergence}
\label{s:FreeEnergyConvergence}


In this section, we first prove some lemmas. We then prove Theorem~\ref{t:EnergyConvergence}
on the $\Gamma$-convergence of free-energy functionals
and its Corollary~\ref{c:existenceF0}. Finally, we prove Theorem~\ref{t:individual} 
on the equivalence of the convergence of
total free energy and that of each individual part of the free energy. 

The first lemma is on the existence of a phase-field minimizer 
for the functional $F_\xi$ (cf.\ \reff{newFxiphi}) for each $\xi \in (0, \xi_0].$ This result will be used in proving Corollary~\ref{c:existenceF0}.


\begin{lemma}
\label{l:minimizerFxi}
Let $\xi \in (0, \xi_0].$ There exists $\phi_\xi \in H^1(\Omega)$ such that 
\[
F_\xi [\phi_\xi  ] = \min_{\phi \in H^1(\Omega)} F_\xi [\phi ] 
= \min_{\phi \in L^1(\Omega)} F_\xi [\phi ], 
\]
 which is finite. 
\end{lemma}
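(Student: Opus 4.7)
The plan is to apply the direct method of the calculus of variations. Since $F_\xi[\phi]=+\infty$ whenever $\phi\in L^1(\Omega)\setminus H^1(\Omega)$, it suffices to minimize over $H^1(\Omega)$. First I would check that $\inf_{H^1(\Omega)} F_\xi$ is finite. To bound the infimum from above, choose $\eta \in C_c^\infty(\Omega;[0,1])$ with $\eta\equiv 1$ on a neighborhood of $\{x_1,\ldots,x_N\}$; then $(\eta-1)^2 U$ is supported away from the singularities of $U$ and is therefore bounded and integrable, while $F_{\rm ele}[\eta]$ is finite by Theorem~\ref{t:phiPB}, so $F_\xi[\eta]<\infty$. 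For a lower bound, I would use the quartic growth of the double-well $W(\phi)\ge c_0 \phi^4 - c_1$ on $\mathbb{R}$, the elementary bound $\rho_0(\phi-1)^2 U \ge 2\rho_0 U_{\rm min}(\phi^2+1)$ (using $U_{\rm min}\le 0$ and $(\phi-1)^2\le 2\phi^2+2$), and the estimate $F_{\rm ele}[\phi] \ge -E_\phi[\psi_\infty] \ge -C_2(1+\|\phi\|_{L^2(\Omega)}^2)$, which follows from Theorem~\ref{t:phiPB} together with $\psi_\infty\in W^{2,\infty}$. Absorbing $\|\phi\|_{L^2(\Omega)}^2$ into $\|\phi\|_{L^4(\Omega)}^4$ via H\"older and Young gives, for every $\phi\in H^1(\Omega)$,
\[
F_\xi[\phi] \ge \tfrac{\gamma_0\xi}{2}\|\nabla\phi\|_{L^2(\Omega)}^2 + c_3\|\phi\|_{L^4(\Omega)}^4 - C_4,
\]
with positive constants $c_3,C_4$ depending on $\xi$ and the data, so $F_\xi$ is bounded below and every minimizing sequence is bounded in $H^1(\Omega)\cap L^4(\Omega)$.

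Next I would extract a candidate minimizer. Let $\{\phi_k\}\subset H^1(\Omega)$ be a minimizing sequence. By the coercivity above, a subsequence (not relabeled) satisfies $\phi_k\rightharpoonup \phi_\xi$ weakly in $H^1(\Omega)$, strongly in $L^p(\Omega)$ for every $p\in[1,6)$ (Rellich--Kondrachov in three dimensions), and a.e.\ in $\Omega$, with $\sup_k \|\phi_k\|_{L^4(\Omega)}<\infty$. I would then verify term by term that $F_\xi[\phi_\xi]\le \liminf_{k\to\infty} F_\xi[\phi_k]$: the volume term $\int \phi_k^2\,dx$ and the double-well term $\int W(\phi_k)\,dx$ pass to the limit by strong convergence in $L^4(\Omega)$, since $|W(\phi)|\le C(1+|\phi|^4)$; the gradient term $\tfrac{\xi}{2}\int |\nabla\phi|^2\,dx$ is weakly lower semicontinuous on $H^1(\Omega)$; and $F_{\rm ele}[\phi_k]\to F_{\rm ele}[\phi_\xi]$ by Theorem~\ref{t:PBenergy}, whose hypotheses hold because $\phi_k\to \phi_\xi$ in $L^1(\Omega)$ and $\{\phi_k\}$ is $L^4$-bounded.

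The main obstacle is the solute-solvent interaction term $\rho_0\int(\phi-1)^2 U\,dx$, because $U$ has non-integrable singularities at the $x_i$'s. I would handle it by splitting $U=U^+ - U^-$ with $U^\pm\ge 0$; then $U^-\le |U_{\rm min}|$ is bounded on $\Omega$ by assumption (A2), so the strong $L^2$-convergence $\phi_k\to\phi_\xi$ gives $\int (\phi_k-1)^2 U^-\,dx \to \int (\phi_\xi-1)^2 U^-\,dx$. For the nonnegative part, the boundedness of $F_\xi[\phi_k]$ together with the lower bounds already established on all other summands forces $\sup_k \int(\phi_k-1)^2 U^+\,dx<\infty$, and Fatou's lemma applied to the a.e.\ convergence $(\phi_k-1)^2 U^+ \to (\phi_\xi-1)^2 U^+$ yields $\int(\phi_\xi-1)^2 U^+\,dx \le \liminf_k \int(\phi_k-1)^2 U^+\,dx<\infty$. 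Subtracting gives $\liminf_k \int(\phi_k-1)^2 U\,dx \ge \int(\phi_\xi-1)^2 U\,dx$ with both sides finite. Combining all four estimates produces $F_\xi[\phi_\xi]\le \liminf_k F_\xi[\phi_k] = \inf_{L^1(\Omega)} F_\xi$, which is finite, so $\phi_\xi$ is the desired minimizer.
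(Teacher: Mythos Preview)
Your proof is correct and follows essentially the same route as the paper: the direct method with a coercivity estimate in $H^1\cap L^4$, extraction of a weakly convergent subsequence, and term-by-term lower semicontinuity, with the solute--solvent term handled by splitting $U$ into its positive and negative parts (Fatou for $U^+$, $L^2$-convergence for the bounded $U^-$), and the electrostatic term by Theorem~\ref{t:PBenergy}. The only cosmetic differences are that the paper takes $\phi\equiv 1$ as the test function for the finite upper bound (slightly simpler since then $(\phi-1)^2U\equiv 0$), and it treats $\int W(\phi_k)$ via Fatou rather than via the strong $L^4$-convergence you invoke; your use of the compact embedding $H^1(\Omega)\hookrightarrow\hookrightarrow L^4(\Omega)$ in three dimensions is legitimate and gives actual convergence of the double-well term, which is a harmless sharpening.
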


\begin{proof}
Let $\phi \in H^1(\Omega).$ We have by our assumptions on the functions $U$ and $\ve$, the fact that 
\[
W(s)-s^4 = 18s^2(s-1)^2 - s^4 \to +\infty \quad \mbox{as } s \to \infty, 
\]
the inequality 
\[
\min_{u\in \calA} E_\phi [u] \le E_\phi [\psi_\infty]
= \int_\Omega \left[ \frac{\ve (\phi) }{2} |\nabla \psi_\infty  |^2
- \rho \psi_\infty   + (\phi-1)^2 B( \psi_\infty )  \right] dx, 
\]
and H\"older's inequality that 
\begin{align}
\label{boundxi}
F_\xi [\phi ] & \ge \int_\Omega \left[ P_0\phi^2 + \frac{\gamma_0 \xi}{2} |\nabla \phi|^2 \right] dx
+ \frac{\gamma_0}{ \xi} \|\phi  \|_{L^4(\Omega)}^4
+ \frac{\gamma_0}{ \xi} \int_\Omega \left[ W(\phi ) - \phi^4 \right] dx 
\nonumber 
\\
&\qquad + \rho_0  \int_{\{ x \in \Omega: U(x) \le 0 \}} (\phi  -1)^2 U\, dx - E_\phi [\psi_\infty]
\nonumber 
\\
&\ge C_1  \left( \| \phi \|_{H^1(\Omega)}^2 + \|\phi \|^4_{L^4(\Omega)} \right)
- 2 \left( \rho_0  |U_{\rm min}| + \| B(\psi_\infty) \|_{L^\infty(\Omega)} \right)
  \int_{\Omega} \phi^2 \, dx - C_2  
\nonumber \\
&\ge C_3 \left( \| \phi \|_{H^1(\Omega)}^2 + \|\phi \|^4_{L^4(\Omega)} \right) - C_4, 
\end{align}
where all $C_i $ $(i = 1, \dots, 4)$  are positive constants independent of $\phi \in H^1(\Omega).$

Let $\alpha = \inf_{\phi \in H^1(\Omega)} F_\xi[\phi ].$  By \reff{boundxi}, $\alpha > -\infty.$
Setting $\phi (x) = 1$ for all $x \in \Omega$, we have $\alpha \le E_\xi [\phi ] < \infty.$ 
So, $\alpha $ is finite. 
Let $\phi_k \in H^1(\Omega)$ $(k = 1, 2, \dots)$ be such that $F_\xi[\phi_k] \to \alpha.$ 
By \reff{boundxi}, $\{ \phi_k \}$ is bounded in $H^1(\Omega)$. Hence, it has a subsequence, 
not relabeled, such that $\psi_k \to \phi_\xi $ weakly in $H^1(\Omega)$, strongly in $L^2(\Omega)$, 
and a.e.\ in $\Omega$ for some $\phi_\xi  \in H^1(\Omega).$  

Since $\phi_k \to \phi_\xi $ in $L^2(\Omega)$ and $U$ is bounded below, 
\begin{align}
\label{L2phik}
& \lim_{k\to \infty } \left[ P_0\int_\Omega \phi_k^2 \, dx  + \rho_0  
\int_{\{ x\in\Omega: U(x) \le 0 \}} (\phi_k-1)^2 U \, dx \right]  
\nonumber \\
& \quad 
= P_0 \int_\Omega \phi_\xi^2 \, dx  + \rho_0 \int_{\{ x\in\Omega: U(x) \le 0 \}} 
(\phi_\xi -1)^2 U \, dx.  
\end{align}
Since $\phi_k \to \phi_\xi $ weakly in $H^1(\Omega)$, 
\begin{equation}
\label{H1weak}
 \liminf_{k\to \infty}  \gamma_0 \int_\Omega  \frac{\xi }{2} |\nabla \phi_k |^2 dx  
\ge \gamma_0 \int_\Omega  \frac{\xi }{2} |\nabla \phi_{\xi} |^2 dx. 
\end{equation} 
Since $\phi_k \to \phi_\xi$ a.e.\ in $\Omega$, Fatou's Lemma implies that 
\begin{align}
\label{Fatouxi}
& \liminf_{k\to \infty} \left[ \gamma_0 \int_\Omega \frac{1}{\xi} W(\phi_k) \, dx 
+  \rho_0 \int_{\{ x \in \Omega:U(x) > 0 \}} (\phi_k-1)^2 U\, dx \right] 
\nonumber \\
&\qquad \ge \gamma_0 \int_\Omega \frac{1}{\xi} W(\phi_\xi) \, dx + 
\rho_0  \int_{\{ x \in \Omega:U(x) > 0 \}} (\phi_\xi -1)^2 U\, dx. 
\end{align}
By the Sobolev embedding $H^1(\Omega) \hookrightarrow L^4(\Omega),$ 
 $\sup_{k \ge 1} \| \phi_k \|_{L^4(\Omega)} < \infty.$ 
Hence it follows from Theorem~\ref{t:PBenergy} that 
\begin{equation}
\label{elexi}
\lim_{k \to \infty} \min_{u \in \calA} E_{\phi_k}[u] = \min_{u \in \calA} E_{\phi_\xi}[u]. 
\end{equation}
Combining \reff{L2phik}--\reff{elexi}, we obtain 
\[
\alpha = \liminf_{k\to \infty} F_\xi[\phi_k]\ge F_\xi[\phi_\xi] \ge \alpha. 
\]
Hence $F_\xi[\phi_\xi] = \min_{\phi \in H^1(\Omega)} F_\xi [\phi ]$. 
But $F_\xi [\phi ] = +\infty$ if $\phi \in L^1(\Omega) \setminus 
H^1(\Omega).$ Hence 
$
F_\xi[\phi_\xi ] = \min_{\phi \in L^1(\Omega)} F_\xi [\phi]. 
$
\end{proof}

Next, we establish some lower bound for the functional $F_\xi = F_\xi [\phi]$ for 
all $\phi$ and $\xi.$ 

\begin{lemma}
\label{l:LowerBound}
There exists a constant $C$ such that 
for any $\phi \in H^1(\Omega)$ and any $\xi \in (0, \xi_0]$ 
\begin{equation}
\label{FxiphiLowerBound}
F_\xi [\phi] \ge \frac{\gamma_0 }{2}  \left[ \xi \| \nabla \phi \|_{L^2(\Omega)}^2 + 
\frac{1}{\xi}  \| W(\phi) \|_{L^1(\Omega)}\right] + 9 \gamma_0 \| \phi \|_{L^4(\Omega)}^4 
+ \rho_0  \int_\Omega (\phi-1)^2 | U | \, dx +C.
\end{equation}
\end{lemma}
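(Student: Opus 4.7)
The plan is to cancel the matching $\frac{\gamma_0\xi}{2}\|\nabla\phi\|_{L^2(\Omega)}^2$ terms on both sides and reduce the desired inequality to a uniform lower bound on
\[
P_0\|\phi\|_{L^2(\Omega)}^2 + \frac{\gamma_0}{2\xi}\|W(\phi)\|_{L^1(\Omega)} - 9\gamma_0\|\phi\|_{L^4(\Omega)}^4 - 2\rho_0\int_\Omega(\phi-1)^2 U_{-}\,dx + F_{\rm ele}[\phi],
\]
where $U_{-}:=\max(-U,0)$ so that $U-|U|=-2U_{-}$. Since $0\le U_{-}\le |U_{\rm min}|$ by (A2), the fourth term contributes at most $-2\rho_0|U_{\rm min}|\int_\Omega(\phi-1)^2\,dx$. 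Because $\psi_\phi$ minimizes $E_\phi$ over $\calA$, I have $F_{\rm ele}[\phi] = -E_\phi[\psi_\phi]\ge -E_\phi[\psi_\infty]$; expanding $E_\phi[\psi_\infty]$ and using that $\ve$, $\psi_\infty$, $\nabla\psi_\infty$, $\rho$, and $B(\psi_\infty)$ are all bounded (by (A1), (A3), (A4)) yields $F_{\rm ele}[\phi]\ge -C_1 - C_2\int_\Omega(\phi-1)^2\,dx$. Using $\int_\Omega(\phi-1)^2\,dx \le 2\|\phi\|_{L^2(\Omega)}^2 + 2|\Omega|$, the task reduces to producing a uniform lower bound on
\[
(P_0-M)\|\phi\|_{L^2(\Omega)}^2 + \frac{\gamma_0}{2\xi}\|W(\phi)\|_{L^1(\Omega)} - 9\gamma_0\|\phi\|_{L^4(\Omega)}^4
\]
for some constant $M\ge 0$ depending only on $\rho_0$, $|U_{\rm min}|$, and $\|B(\psi_\infty)\|_{L^\infty}$.

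The key input is that $W(s)=18s^2(1-s)^2 \sim 18s^4$ as $|s|\to\infty$. A direct case analysis shows that for each $a\in(0,18)$, $W(s)\ge as^4$ whenever $|s|\ge K_a := 1/(1-\sqrt{a/18})$: the inequality is automatic for $s\le 0$ (since $W(s)\ge 18s^4$ there), while for $s>0$ it reduces to $|1-s|\ge\sqrt{a/18}\,s$, which fails only on a bounded interval around $s=1$. Consequently
\[
\|\phi\|_{L^4(\Omega)}^4 \le \frac{1}{a}\|W(\phi)\|_{L^1(\Omega)} + K_a^4|\Omega|.
\]
To handle the possibly negative coefficient $P_0-M$, I would apply Cauchy--Schwarz followed by Young's inequality to get $\|\phi\|_{L^2(\Omega)}^2 \le (\delta/2)\|\phi\|_{L^4(\Omega)}^4 + |\Omega|/(2\delta)$ for any $\delta>0$. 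This enlarges the effective coefficient on $\|\phi\|_{L^4(\Omega)}^4$ from $9\gamma_0$ to $9\gamma_0 + |P_0-M|\delta/2$, and absorption into $\frac{\gamma_0}{2\xi}\|W(\phi)\|_{L^1(\Omega)}$ succeeds provided $a\ge 2\xi(9 + |P_0-M|\delta/(2\gamma_0))$. Taking the worst case $\xi=\xi_0$, the condition becomes $a\ge 18\xi_0 + |P_0-M|\delta\xi_0/\gamma_0$.

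The main obstacle is calibrating $a$ and $\delta$ so that both $a<18$ (needed for the pointwise growth estimate on $W$) and $a\ge 18\xi_0 + |P_0-M|\delta\xi_0/\gamma_0$ (needed for absorption) hold simultaneously. The available margin is exactly $18(1-\xi_0)>0$, coming from the hypothesis $\xi_0\in(0,1)$: this lets one first choose $\delta>0$ small enough that the lower bound on $a$ stays strictly below $18$, and then pick any admissible $a$ in the resulting nonempty interval. The residual constants arising from $K_a^4|\Omega|$, the Young term $|\Omega|/(2\delta)$, and the estimate for $F_{\rm ele}$ are all $\xi$-independent, yielding the desired absolute constant $C$.
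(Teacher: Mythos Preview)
Your proof is correct and uses the same essential inputs as the paper's: the comparison $F_{\rm ele}[\phi]=-\min_{u\in\calA}E_\phi[u]\ge -E_\phi[\psi_\infty]$, the bound $U_-\le |U_{\min}|$, and the quartic growth of $W$ combined with the hypothesis $\xi_0\in(0,1)$ to absorb the $9\gamma_0\|\phi\|_{L^4}^4$ term. The only real difference is in packaging. You work at the level of norms, which forces the two-parameter calibration of $a$ and $\delta$; the paper instead collects everything into a single pointwise function
\[
g(s)=\frac{\gamma_0}{2\xi_0}W(s)-9\gamma_0 s^4-\bigl[2\rho_0|U_{\min}|+\|B(\psi_\infty)\|_{L^\infty(\Omega)}\bigr](s-1)^2,
\]
observes that the leading behavior is $9\gamma_0(\tfrac{1}{\xi_0}-1)s^4\to+\infty$, and concludes that $g$ is bounded below. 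This pointwise device avoids the Young-inequality step and the separate $a,\delta$ tuning, giving a shorter argument; your route is equally valid but does a bit more work to reach the same place.
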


\begin{proof}
Fix $\phi \in H^1(\Omega)$ and $\xi \in (0, \xi_0]$. Recall from \reff{Ephiu} that 
\[
E_\phi [ \psi_\infty ] = \int_\Omega \left[ \frac{\ve (\phi) }{2} |\nabla \psi_\infty  |^2
- \rho \psi_\infty   + (\phi-1)^2 B( \psi_\infty  )  \right] dx.
\]
We have then by the definition of $F_\xi$ (cf.\ \reff{newFxiphi}) that 
\begin{align}
\label{proofLowerBound}
0 &\le \frac{\gamma_0 }{2}  \left[ \xi \| \nabla \phi \|_{L^2(\Omega)}^2 + 
\frac{1}{\xi}  \| W(\phi) \|_{L^1(\Omega)}\right] + 9 \gamma_0  \| \phi \|_{L^4(\Omega)}^4 
+ \rho_0  \int_\Omega (\phi-1)^2 | U | \, dx  \nonumber \\
& =F_\xi [\phi] - P_0 \|\phi\|^2_{L^2(\Omega)}-\frac{\gamma_0}{2 \xi} \|W(\phi)\|_{L^1(\Omega)} 
+ 9 \gamma_0 \| \phi \|_{L^4(\Omega)}^4
\nonumber \\
& \qquad 
+ \rho_0 \int_\Omega (\phi-1)^2 ( |U| - U) \, dx + \min_{u \in \calA} E_\phi[u]
\nonumber \\
& \le  
F_\xi [\phi] - \frac{\gamma_0}{2 \xi_0} \|W(\phi)\|_{L^1(\Omega)} 
+ 9 \gamma_0  \| \phi \|_{L^4(\Omega)}^4
+ 2 \rho_0  \int_{\{ x \in \Omega: U(x) \le 0\}}  (\phi-1)^2  |U| \, dx + E_\phi[\psi_\infty]
\nonumber \\
&  \le  
F_\xi [\phi] - \frac{\gamma_0}{2 \xi_0} \|W(\phi)\|_{L^1(\Omega)} 
+ 9 \gamma_0 \| \phi \|_{L^4(\Omega)}^4
+ 2 \rho_0 | U_{\rm min} | \int_{\Omega} (\phi-1)^2  \, dx 
\nonumber \\
& \qquad 
+\frac{1}{2} \max (\ve_{\rm p},  \ve_{\rm w})
\| \nabla \psi_\infty \|^2_{L^2(\Omega)} + \| \rho\|_{L^2(\Omega)} \| \psi_\infty \|_{L^2(\Omega)}
+ \| B(\psi_\infty)\|_{L^\infty(\Omega)} \int_\Omega (\phi-1)^2 dx
\nonumber \\
& = F_\xi [\phi] - \int_\Omega g(\phi) \, dx 
+\frac{1}{2} \max (\ve_{\rm p},  \ve_{\rm w})
\| \nabla \psi_\infty \|^2_{L^2(\Omega)} + \| \rho \|_{L^2(\Omega)} \| \psi_\infty \|_{L^2(\Omega)},
\end{align}
where $g:\R\to\R$ is given by 
\[
g(s) = \frac{\gamma_0}{2 \xi_0} W(s) - 9 \gamma_0 s^4 - 
\left[  2 \rho_0 | U_{\rm min} |  + \|B(\psi_\infty)\|_{L^\infty(\Omega)} \right] (s-1)^2. 
\]
Note that $\lim_{s\to \infty} g(s) = +\infty$, since 
 $0 < \xi_0 < 1$ and $W(s) = 18s^2 (s-1)^2$. Therefore, $g$ is bounded below.
Setting 
\[
C = |\Omega| \min_{s \in \R} g(s) 
-\frac{1}{2} \max (\ve_{\rm p},  \ve_{\rm w})
\| \nabla \psi_\infty \|^2_{L^2(\Omega)} - \| \rho\|_{L^2(\Omega)} \| \psi_\infty \|_{L^2(\Omega)},  
\]
we then obtain the desired estimate \reff{FxiphiLowerBound} from \reff{proofLowerBound}. 
\end{proof}

The following lemma, stated for $\R^n$ with a general $n \ge 2$, is refinement of a standard result;  
it is used in the proof of Theorem~\ref{t:EnergyConvergence} and Theorem~\ref{th:CH-force-conv}:  

\begin{lemma}
\label{l:etak}
Let $\Omega$ be a nonempty, bounded, and open subset of $\R^n$ with $n \ge 2.$ 
Let $G$ be a measurable subset of $\Omega$ with $P_\Omega(G) < \infty.$ 
Assume that $\xi_k \searrow 0$ and  $\phi_k \in H^1(\Omega)$ $(k = 1, 2, \dots)$ satisfy
$\phi_k \to \chi_G$ a.e.\ in $\Omega$ and 
\begin{equation}
\label{supsupsup}
\sup_{k \ge 1} \int_\Omega \left[ \frac{\xi_k}{2} |\nabla \phi_k|^2 + \frac{1}{\xi_k}
W(\phi_k) \right] dx < \infty. 
\end{equation}
Define 
\[
\eta_k(x) = \int_0^{\phi_k(x)} \sqrt{2 W(t) } \, dt \qquad \forall  x \in \Omega, \,  k = 1, 2, \dots 
\]
Then 
\begin{align}
\label{etakbound}
& \sup_{k \ge 1} \left[ 
 \| \eta_k \|_{L^{4/3}(\Omega)} + \| \eta_k \|_{W^{1,1}(\Omega)} \right] < \infty, 
\\
\label{etakconv}
& \eta_k \to \chi_G  \ \mbox{a.e.\ in } \Omega \ \mbox{and} \
\mbox{in } L^q(\Omega) \ \mbox{for any } q \in [1, 4/3), 
\\
\label{POGetak}
& P_\Omega(G) \le \liminf_{k \to \infty} \int_\Omega | \nabla \eta_k | \, dx  
\le \liminf_{k\to \infty} 
 \int_\Omega \left[ \frac{\xi_k}{2} |\nabla \phi_k|^2 + \frac{1}{\xi_k}
W(\phi_k) \right] dx. 
\end{align} 
If, in addition, $\overline{G} \subset \Omega,$ then 
\begin{equation}
\label{etakweakconv}
\lim_{k\to \infty} \int_\Omega  \nabla \eta_k \cdot g \, dx =
-\int_{\partial^* G} g\cdot \nu \, d\calH^{n-1} \qquad \forall g \in C_c(\Omega, \R^n).    
\end{equation}
\end{lemma}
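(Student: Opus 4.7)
The plan is to establish the four assertions \reff{etakbound}--\reff{etakweakconv} in sequence, relying on the pointwise identity $\sqrt{2W(t)} = 6|t||1-t|$, on a pointwise Young-type bound that controls $|\nabla \eta_k|$ by the energy density, and on $L^1$-lower semicontinuity of the BV-seminorm. The only mildly delicate part is a uniform $L^4$-bound on $\phi_k$ extracted from \reff{supsupsup}; everything else will reduce to chain rule, dominated convergence, and approximation.

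First I would prove \reff{etakbound}. From $\sqrt{2W(t)} \le 6(|t| + t^2)$ one gets pointwise $|\eta_k(x)| \le C(1 + |\phi_k(x)|^3)$, so an $L^{4/3}$ bound on $\eta_k$ follows from an $L^4$ bound on $\phi_k$. The latter I would obtain from the polynomial coercivity inequality $s^4 \le \tfrac{1}{9} W(s) + C$, a consequence of $W(s) = 18s^4 - 36s^3 + 18s^2$ and Young's inequality; integrating and using \reff{supsupsup} (which forces $\int_\Omega W(\phi_k)\,dx \le \xi_1 M$ with $M$ the supremum in \reff{supsupsup}) bounds $\|\phi_k\|_{L^4(\Omega)}$ uniformly in $k$, hence $\|\eta_k\|_{L^{4/3}(\Omega)}$. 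For the gradient, Young's inequality applied pointwise gives
\[
|\nabla \eta_k| = \sqrt{2 W(\phi_k)}\,|\nabla\phi_k| \le \tfrac{\xi_k}{2}|\nabla\phi_k|^2 + \tfrac{1}{\xi_k}W(\phi_k),
\]
and integrating produces $\sup_k \|\nabla \eta_k\|_{L^1(\Omega)} < \infty$. For \reff{etakconv}, continuity of the primitive $s \mapsto \int_0^s \sqrt{2W(t)}\,dt$ together with the normalizations $\eta(0)=0$ and $\eta(1) = \int_0^1 6t(1-t)\,dt = 1$ gives $\eta_k \to \chi_G$ a.e.\ in $\Omega$; the $L^q$-convergence for $q \in [1, 4/3)$ then follows from the uniform $L^{4/3}$-bound and Lemma~\ref{l:Lq}.

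For \reff{POGetak}, the second inequality is the integrated form of the pointwise Young bound displayed above. The first is the standard $L^1$-lower semicontinuity of the BV-seminorm applied to the sequence $\eta_k \in W^{1,1}(\Omega) \subset BV(\Omega)$, which converges in $L^1(\Omega)$ to $\chi_G$ by \reff{etakconv}; note that $|\nabla \chi_G|_{BV(\Omega)} = P_\Omega(G)$ by definition.

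Finally, for \reff{etakweakconv}, I would first treat test fields $g \in C_c^1(\Omega,\R^n)$: integration by parts for the $W^{1,1}$-function $\eta_k$ yields
\[
\int_\Omega \nabla \eta_k \cdot g\, dx = -\int_\Omega \eta_k \nabla \cdot g\, dx \longrightarrow -\int_G \nabla \cdot g\, dx = -\int_{\partial^* G} g \cdot \nu\, d\calH^{n-1},
\]
using $\eta_k \to \chi_G$ in $L^1(\Omega)$ and the divergence identity \reff{perimetermeasure}. To pass to general $g \in C_c(\Omega, \R^n)$, the hypothesis $\overline{G} \subset \Omega$ together with the compactness of $\mbox{supp}(g)$ in $\Omega$ allows the mollification $g_\epsilon := g * \rho_\epsilon$ to lie in $C_c^1(\Omega,\R^n)$ for all small $\epsilon$, with $g_\epsilon \to g$ uniformly. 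The approximation errors $|\int_\Omega \nabla\eta_k \cdot (g - g_\epsilon)\, dx|$ and $|\int_{\partial^* G}(g - g_\epsilon) \cdot \nu\, d\calH^{n-1}|$ are each controlled by $\|g - g_\epsilon\|_\infty$ times a $k$-uniform finite quantity (namely $\sup_k \int_\Omega |\nabla \eta_k|\, dx$ from \reff{etakbound}, and $P_\Omega(G)$ respectively); sending $k \to \infty$ first and then $\epsilon \to 0$ closes the argument. The main obstacle, to the extent there is one, is the $L^{4/3}$ bound in step one, since one must squeeze uniform-in-$k$ $L^4$-control on $\phi_k$ out of the bare energy bound \reff{supsupsup} via the coercivity $s^4 \le \tfrac{1}{9}W(s) + C$; once that is in hand, the remaining steps are routine lower-semicontinuity and approximation.
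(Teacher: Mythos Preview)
Your proposal is correct and follows essentially the same approach as the paper: both use the pointwise bound $|\eta_k| \le C(|\phi_k|+|\phi_k|^3)$ together with an $L^4$-bound on $\phi_k$ extracted from \reff{supsupsup}, the Young inequality $|\nabla\eta_k|\le \tfrac{\xi_k}{2}|\nabla\phi_k|^2+\tfrac{1}{\xi_k}W(\phi_k)$, a.e.\ convergence plus Lemma~\ref{l:Lq}, $L^1$-lower semicontinuity of the BV-seminorm, and integration by parts for $g\in C_c^1$ followed by uniform approximation to pass to $g\in C_c$. Your treatment is in fact slightly more explicit than the paper's in two places (the coercivity inequality $s^4\le \tfrac{1}{9}W(s)+C$ and the mollification-based density argument), but the logical structure is identical.
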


\begin{proof}
Since $W$ is a quartic potential, we have $\sqrt{2 W(t)}  \le C (1+ t^2) $ for all $ t \in \R. $   
Here and below, $C$ denotes a generic, positive constant. 
Therefore, 
\[
|\eta_k| \le  C( |\phi_k| + |\phi_k|^3) \qquad \mbox{a.e.\ in } \Omega, \, k = 1, 2, \dots 
\]
By \reff{supsupsup}, $\sup_{k \ge 1} \| \phi_k \|_{L^4(\Omega)} < \infty.$ 
This implies that 
\begin{equation}
\label{onesup}
\sup_{k \ge 1 } \|\eta_k \|_{L^{4/3}(\Omega)} < \infty. 
\end{equation}
Note for each $k \ge 1$ that $\nabla \eta_k  = \sqrt{2 W(\phi_k)} \nabla \phi_k$ a.e.\ in $\Omega.$ 
Hence, 
\begin{equation*}
\int_\Omega|\nabla \eta_k| \;dx= \int_\Omega \left|\sqrt{2 W(\phi_k)} \nabla \phi_k\right|\;dx 
\le \int_\Omega\left[ \frac{\xi_k}{2}|\nabla\phi_k|^2 
 + \frac{1}{\xi_k}W(\phi_k)\right] dx. 
\end{equation*} 
This, together with \reff{supsupsup} and \reff{onesup}, then implies that 
\begin{equation}
\label{twosup}
\sup_{k \ge 1 } \|\eta_k \|_{W^{1,1}(\Omega)} < \infty. 
\end{equation}
Now \reff{etakbound} follows from \reff{onesup} and \reff{twosup}. 

Since $\phi_k \to \chi_G$ a.e.\ in $\Omega$
and the integral of $\sqrt{2 W(s)}$ over $[0,1]$ is $1$, we have 
$\eta_k \to \chi_G$ a.e.\ in $\Omega$. 
Lemma~\ref{l:Lq} and \reff{onesup} imply that $\eta_k \to \chi_G$ in 
$L^q(\Omega)$ for any $q \in [1, 4/3).$ Hence \reff{etakconv} is proved. 

By the fact that $W^{1,1}(\Omega) \hookrightarrow BV(\Omega)$ and \reff{etakbound}, we have 
$\sup_{k\ge 1}\|\eta_k\|_{\rm BV(\Omega)} <\infty.$
Consequently, by \reff{etakconv} 
 \cite{Giusti84,Ziemer_Book89,EvansGariepy_Book92}, 
\begin{align*}
P_\Omega(G)  &\le \liminf_{k\to \infty} \int_\Omega |\nabla \eta_k | \, dx 
\\
& = \liminf_{k\to \infty} \int_\Omega \sqrt{2 W(\phi_k)} |\nabla \phi_k| \, dx 
\\
& \le \liminf_{k\to\infty} \int_\Omega \left[ \frac{\xi_k}{2} |\nabla \phi_k|^2 + \frac{1}{\xi_k}
W(\phi_k) \right] dx. 
\end{align*}
This is \reff{POGetak}. 

Finally, if $g \in C_c^1(\Omega, \R^n)$, then it follows from \reff{etakconv} 
and \reff{perimetermeasure} that 
\begin{align*}
 \lim_{k\to \infty} \int_\Omega \nabla \eta_k \cdot g \, dx 
 =
- \lim_{k \to \infty} \int_\Omega  \eta_k  \nabla \cdot  g \, dx 
 = -\int_G  \nabla \cdot  g \, dx 
 = 
-\int_{\partial^* G} g\cdot \nu \, d\calH^{n-1}. 
\end{align*}
Since $\sup_{k \ge 1} \| \eta_k \|_{W^{1,1}(\Omega)} < \infty$ by 
\reff{etakbound} and the perimeter measure 
$\| \partial G\|=\cH^{n-1}\mres(\partial^*G\cap \Omega )$ 
is a Radon measure on $\Omega$, the equation in \reff{etakweakconv} for any
function $g \in C_c(\Omega, \R^n)$ follows from the fact that such a 
function can be approximated uniformly on any compact subsets of $\Omega$ 
by functions in $C_c^1(\Omega, \R^n)$. 
\end{proof}

We denote $B(\sigma) = \cup_{i=1}^N B(x_i, \sigma)$ for any $\sigma > 0.$ 
The following is the last lemma we need to prove our $\Gamma$-convergence result: 


\begin{lemma}
\label{l:setapprox}
Let $G$ be a measurable subset of $\Omega$ such that $P_\Omega(G) < \infty,$ 
$G \supseteq B(\sigma)$ for some $\sigma > 0$, and $|G| < |\Omega|$. 
Then there exist bounded open sets $D_k \subseteq \R^3$ $(k = 1, 2, \dots)$ that satisfy the following
properties: 
\begin{compactenum}
\item[\rm (1)]
For each $k \ge 1$, $D_k \cap \Omega \supseteq B(\sigma/2); $ 
\item[\rm (2)]
For each $k \ge 1$, $\partial D_k$ is a nonempty compact hypersurface of class $C^\infty$ and 
$\partial D_k \cap \Omega$ is of class $C^2;$ 
\item[\rm (3)]
For each $k \ge 1$, $ \calH^2(\partial D_k \cap \partial \Omega) = 0; $ 
\item[\rm (4)]
$ | (D_k \cap \Omega ) \Delta G | \to 0$ as $k \to \infty;$
\item[\rm (5)]
$P_\Omega(D_k)  = P_\Omega( D_k \cap \Omega) \to P_\Omega(G)$ as $k \to \infty. $
\end{compactenum}
\end{lemma}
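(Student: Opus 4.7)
The plan is to construct each $D_k$ as a smooth super-level set $\{v_k > t^*\}$ of an approximation of $\chi_G$, with $t^*$ selected via Sard's theorem so that the level set is smooth and transverse to $\partial \Omega$. First I would apply the Anzellotti--Giaquinta strict approximation theorem (valid since $\chi_G \in BV(\Omega)$ and $\partial\Omega$ is Lipschitz) to obtain $u_\ve \in C^\infty(\overline\Omega)$ with $0 \le u_\ve \le 1$, $u_\ve \to \chi_G$ in $L^1(\Omega)$, and $\int_\Omega |\nabla u_\ve|\, dx \to P_\Omega(G)$; truncating by $\max(0,\min(1,\cdot))$ preserves these properties. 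To enforce property~(1), I would pick a cut-off $\zeta \in C_c^\infty(B(\sigma))$ with $0 \le \zeta \le 1$ and $\zeta \equiv 1$ on $\overline{B(\sigma/2)}$, and replace $u_\ve$ by $v_\ve := (1-\zeta)u_\ve + \zeta \in C^\infty(\overline\Omega)$. Then $0 \le v_\ve \le 1$, $v_\ve \equiv 1$ on $B(\sigma/2)$, and $v_\ve \to \chi_G$ in $L^1(\Omega)$; since $\chi_G \equiv 1$ on $B(\sigma) \supset \mathrm{supp}\,\zeta$, the measure $|D\chi_G|$ vanishes there, and a direct computation using the product rule for $\nabla v_\ve$ together with Reshetnyak continuity for strict BV convergence gives $\int_\Omega |\nabla v_\ve|\, dx \to P_\Omega(G)$.

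Next I would extract a subsequence with $v_{\ve_j} \to \chi_G$ a.e.\ in $\Omega$, and extend each $v_{\ve_j}$ smoothly to some $\tilde v_{\ve_j} \in C_c^\infty(\R^3)$ agreeing with $v_{\ve_j}$ on $\overline\Omega$ (via Whitney extension followed by multiplication by a cutoff supported in a small enlargement of $\overline\Omega$). The coarea formula gives $\int_0^1 P_\Omega(\{v_{\ve_j} > t\})\, dt = \int_\Omega |\nabla v_{\ve_j}|\, dx \to P_\Omega(G)$. For each fixed $t \in (0,1)$, bounded convergence yields $\chi_{\{v_{\ve_j} > t\}} \to \chi_G$ in $L^1(\Omega)$, so lower semicontinuity of perimeter gives $\liminf_j P_\Omega(\{v_{\ve_j} > t\}) \ge P_\Omega(G)$; combining with Fatou's lemma I conclude $\liminf_j P_\Omega(\{v_{\ve_j} > t\}) = P_\Omega(G)$ for a.e.\ $t \in (0,1)$. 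In parallel, Sard's theorem applied to $\tilde v_{\ve_j}$ on $\R^3$ and to $v_{\ve_j}|_{\partial\Omega}$ on the $C^2$-manifold $\partial\Omega$ provides, for each $j$, a full-measure set of simultaneous regular values; the countable intersection over $j$ retains full measure in $(0,1)$.

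I would then pick $t^* \in (0,1)$ in the intersection of both full-measure sets, pass to a further subsequence with $P_\Omega(\{v_{\ve_{j_k}} > t^*\}) \to P_\Omega(G)$, and set $D_k := \{\tilde v_{\ve_{j_k}} > t^*\}$. Verifying the five properties is then routine: the regular value theorem makes $D_k$ bounded and open with compact $C^\infty$ boundary (giving (2)); $D_k \cap \Omega \supseteq B(\sigma/2)$ since $v_{\ve_{j_k}} \equiv 1$ there (giving (1)); $\partial D_k \cap \partial\Omega = \{v_{\ve_{j_k}}|_{\partial\Omega} = t^*\}$ is a smooth $1$-submanifold of $\partial\Omega$ and thus has $\calH^2$-measure zero (giving (3)); $|(D_k \cap \Omega) \Delta G| \to 0$ follows from the $L^1$-convergence of $\chi_{\{v_{\ve_{j_k}}>t^*\}}$ (giving (4)); and the vanishing of $\calH^2(\partial D_k \cap \partial\Omega)$ together with smoothness yields $P_\Omega(D_k) = P_\Omega(D_k \cap \Omega) = \calH^2(\{v_{\ve_{j_k}} = t^*\} \cap \Omega) = P_\Omega(\{v_{\ve_{j_k}} > t^*\}) \to P_\Omega(G)$ (giving (5)).

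The main obstacle is securing the strict BV convergence $\int_\Omega |\nabla v_\ve|\, dx \to P_\Omega(G)$ rather than merely the lower-semicontinuity inequality: a naive mollification of $\chi_G$ extended by zero outside $\Omega$ contributes the unwanted boundary term $\calH^2(\partial^* G \cap \partial\Omega)$, which need not vanish here. The Anzellotti--Giaquinta theorem circumvents this precisely because its approximations are intrinsic to $\overline\Omega$, and the bump modification preserves strictness because $|D\chi_G|$ vanishes on the modification region. A secondary technical point is coordinating the Fatou and Sard selections at a single threshold $t^*$, which is handled by intersecting countably many full-measure subsets of $(0,1)$.
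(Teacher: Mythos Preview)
Your approach is essentially the paper's: obtain a smooth strict $BV$-approximation of $\chi_G$, then select a super-level set via the coarea formula and Sard's theorem. The paper differs only in how it manufactures the approximants: it first extends $\chi_G$ to $u\in BV(\R^3)\cap L^\infty(\R^3)$ with $|Du|(\partial\Omega)=0$ (Giusti, \S\S2.8, 2.16), truncates to compact support, and mollifies on all of $\R^3$. This yields $u_k\in C_c^\infty(\R^3)$ directly, with $u_k\equiv 1$ on $B(\sigma/2)$ for free once the mollification radius is below $\sigma/2$, and the zero-mass condition on $\partial\Omega$ is exactly what guarantees $\int_\Omega|\nabla u_k|\,dx\to P_\Omega(G)$.

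One technical point deserves care: the standard Anzellotti--Giaquinta theorem produces approximants in $C^\infty(\Omega)$, not $C^\infty(\overline\Omega)$. With only interior smoothness you cannot directly invoke a Whitney/Seeley extension to $C_c^\infty(\R^3)$ (the boundary jets are not available), and the restriction to the $C^2$-manifold $\partial\Omega$ need not be a $C^2$ function, so Sard there is unjustified as written. The paper's extend-then-mollify construction sidesteps this entirely since the approximants live in $C_c^\infty(\R^3)$ from the outset. If you swap in that step for your intrinsic approximation, the rest of your argument---the coarea/Fatou selection of $t^*$, the countable intersection of Sard sets, and the verification of (1)--(5)---goes through as you describe, and the bump modification becomes unnecessary.
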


This lemma is similar to Lemma~1 in \cite{Modica_ARMA87} and Lemma~1
in \cite{Sternberg_ARMA88}. Here we assume $G \supseteq B(\sigma).$
Moreover, part (1) above replaces the volume constraint
$| D_k \cap \Omega | = |G|$ in \cite{Modica_ARMA87,Sternberg_ARMA88}.
An outline of the proof of this lemma is given in the proof of
Lemma~2.2 in \cite{LiZhao_SIAP13}.  For completeness, here we provide
the main steps of proof, pointing out how the property (1) is
satisfied.

\begin{proof}[Proof of Lemma~\ref{l:setapprox}]
Since $P_\Omega (G) < \infty$, there exists $u \in \mbox{BV}\, (\R^3)\cap L^\infty(\R^3)$
such that $u = \chi_G $ in $\Omega$ and
\begin{equation}
\label{u}
\int_{\partial\Omega}|\nabla u|\, d\calH^2 = 0;
\end{equation}
cf.\ Sections 2.8 and 2.16 in \cite{Giusti84}.  Since $\Omega$ is bounded, 
by using mollifiers, we can further modify $u$ so that it is compactly supported. 
Notice that $u = 1$ on $B(\sigma)$.
By using mollifiers again, we can construct $u_k \in C^\infty(\R^3)$ $(k = 1, 2, \dots)$ such that
$\mbox{supp}\,(u_k) \subseteq B(0,L)$ $(k=1,2,\dots)$ for some $L > 0$ sufficiently large, 
$u_k = 1$ in $B(\sigma/2)$ $(k = 1, 2, \dots)$, $u_k \to u $ in $L^1(\Omega)$, and
using \reff{u}
\[
\lim_{k\to \infty} \int_\Omega | \nabla u_k | \, dx  = | \nabla u |_{BV(\Omega)} = P_\Omega(A);
\]
cf.\ Sections 2.8 and 2.16 in \cite{Giusti84}.

For any $t \in \R$, we define $D_k (t) = \{ x \in \R^3: u_k(x) > t \}$ $(k = 1, 2, \dots)$.
Following Sections 1.24 and 1.26 in \cite{Giusti84}, 
and the proof of Lemma~1 in \cite{Modica_ARMA87} and
Lemma~1 in \cite{Sternberg_ARMA88} (using the co-area formula and Sard's Theorem),
there exists $t_0 \in (0,1)$ and a subsequence of $\{ D_k(t_0)\}$, 
not relabeled, that satisfy (2)--(5) in the lemma with $D_k = D_k(t_0)$ $(k = 1, 2, \dots)$.  
Clearly, for each $k \ge 1$, $D_k$ is an open set with 
 $D_k\subseteq B(0,L).$ Moreover, 
\[
D_k \supseteq \{ x\in \R^3: u_k (x) = 1 \} \supseteq B(\sigma/2), 
\qquad k = 1, 2, \dots
\]
 This, and the fact that $B(\sigma) \subseteq G \subseteq \Omega,$ implies part (1). 
\end{proof}

We are now ready to prove Theorem~\ref{t:EnergyConvergence}.

\begin{proof}[Proof of Theorem~\ref{t:EnergyConvergence}]
Fix $\xi_k \searrow 0.$ 

(1) The liminf condition. 
Assume that $\phi_k \to \phi$ in $L^1(\Omega).$
If 
$
\liminf_{k\to\infty}F_{\xi_k}[\phi_k]=+\infty,
$
then \qref{liminf-ineq} is true. Otherwise, we may assume, without loss of generality, that
\[
 \lim_{k\to\infty}F_{\xi_k}[\phi_k] = \liminf_{k\to\infty}F_{\xi_k}[\phi_k] < \infty
\] 
and that there exists a constant $C > 0$ such that $ F_{\xi_k}[\phi_k ]\leq C$ for all $k \ge 1.$ 
By the definition of functional $F_\xi$ (cf.\ \reff{newFxiphi}), 
this implies that $\phi_k \in H^1(\Omega)$ for each $k \ge 1.$ 
Hence, since $\{ F_{\xi_k}[\phi_k]\}$ is bounded, it follows from Lemma~\ref{l:LowerBound} that 
\[
\sup_{k \ge 1} \int_\Omega \left[ \frac{\xi_k}{2} |\nabla\phi_k|^2 + \frac1{\xi_k}W(\phi_k) 
\right] dx < \infty. 
\] 
Since $W(s) = 18s^2 (s-1)^2$ has exactly two minimum points $0$ and $1$, 
by a usual argument \cite{Modica_ARMA87}, there exists a subsequence of $\{ \phi_{k} \}$, 
not relabeled, 
that converges strongly in $L^1(\Omega)$ and a.e.\ in $\Omega$ to $\chi_G$ for some 
measurable subset $G\subseteq \Omega$ of finite perimeter in $\Omega$. Since 
$\phi_k \to \phi$ in $L^1(\Omega)$, we have $\phi = \chi_G$ a.e.\ in $\Omega$. 
Since $\{ F_{\xi_k}[\phi_k]\}$ is bounded, 
$\{ \| \phi_k \|_{L^4(\Omega)} \}$ is bounded by Lemma~\ref{l:LowerBound}.  Hence, it follows
from Lemma~\ref{l:Lq} that $\phi_k \to \chi_G$ in $L^q(\Omega)$ for any $q \in [1, 4).$

Since $\phi_k \to \chi_G $  in $L^2(\Omega)$, 
	\begin{align}
\label{liminf-term1}
	|G| = \int_\Omega \chi_G^2 \, dx  = \lim_{k\to\infty} \int_\Omega \phi_k^2\;dx. 
	\end{align}	
Lemma~\ref{l:etak} implies that 
	\begin{align} \label{liminf-term2}
 P_\Omega(G) \leq \liminf_{k\to\infty} \int_\Omega \left[ \frac{\xi_k}{2}|\nabla\phi_k|^2 
			+\frac1{\xi_k} W(\phi_k)\right] dx.
	\end{align}
By Fatou's Lemma, the convergence $\phi_k \to \chi_G $ a.e.\ in $\Omega$, the convergence 
$\phi_k \to \chi_G $ in $L^2(\Omega)$, and the fact that $U$ is bounded below, we obtain
\begin{align}
		 \label{liminf-term3} 
\int_{\Omega\setminus G} U \;dx 
& = \int_{\{x\in \Omega\setminus G: U(x) > 0\}} (\chi_G - 1)^2 U\, dx 
+ \int_{\{x\in \Omega\setminus G: U(x) \le  0\}} (\chi_G - 1)^2 U\, dx 
\nonumber 
\\
& \le \liminf_{k\to \infty}  \int_{\{x\in \Omega\setminus G: U(x) > 0\}} (\phi_k - 1)^2 U\, dx 
+ \lim_{k\to \infty} \int_{\{x\in \Omega\setminus G: U(x) \le  0\}} (\phi_k - 1)^2 U\, dx 
\nonumber \\
& = \liminf_{k\to \infty}  \int_{\Omega\setminus G} (\phi_k - 1)^2 U\, dx. 
\end{align}
Since $\{ \| \phi_k \|_{L^4(\Omega)} \}$ is bounded by Lemma~\ref{l:LowerBound}
and $\phi_k \to \chi_G$ in $L^1(\Omega),$ Theorem~\ref{t:PBenergy} implies that  
\begin{equation}
\label{liminf-elec}
\lim_{k\to \infty} \min_{u \in \calA} E_{\phi_k}[u] = \min_{u \in \calA} E_{\chi_G }[u]. 
\end{equation}
The liminf inequality \reff{liminf-ineq} now follows from  \reff{liminf-term1}--\reff{liminf-elec}.

\smallskip 


(2) The recovering sequence. 
Let $\phi \in L^1(\Omega)$. 
	If $F_0[\phi]=+\infty$, then we can take $\phi_k=\phi$ for all $k\ge 1$ to obtain 
\reff{limsup-ineq}. Assume $F_0[\phi]<\infty.$ We then have $\phi = \chi_G\in BV(\Omega)$
for some measurable subset $G \subseteq \Omega$ of finite perimeter in $\Omega.$   
We divide the rest of proof into two steps. 

{\it Step 1.} We first consider the case that $G = D\cap \Omega$ for
some bounded open set $D \subset \R^3$ such that the boundary
$\partial D$ is a nonempty compact hypersurface of class $C^\infty$,
$\partial D\cap \Omega $ is $C^2$, and
$\calH^2(\partial D\cap \partial \Omega) = 0$.  
It follows from a standard argument \cite{Sternberg_ARMA88, 
  Modica_ARMA87, LiZhao_SIAP13}, for $\xi_k\searrow 0$, there exist
$\phi_k\in H^1(\Omega)$ $(k=1,2,\dots)$ satisfying
\begin{align}
\label{phi_n1}
& 0\leq \phi_k \leq \chi_G  \quad \mbox{in } \Omega,  \\
\label{phi_n2}  
& \phi_k = 1 \quad \mbox{in } G_{k}:=\left\{ x\in G: 
\mbox{dist}(x,\partial G)\geq \sqrt{\xi_k} \right\}, \\
\label{phi_n3}
&\phi_k = 0 \quad\mbox{in } \Omega \setminus G, \\
\label{phi_n4} 
& \phi_k\to \chi_G \quad \mbox{strongly in } L^1(\Omega) \mbox{ and a.e.\ in }\Omega, \\	
\label{limsup-term2}
&\limsup_{k\to\infty} \int_\Omega\left[   \frac{\xi_k}{2} |\nabla\phi_k|^2  
+\frac1{\xi_k}W(\phi_k)\right] dx  
 \leq P_\Omega(G).  
\end{align}
	
By \reff{phi_n1}, \reff{phi_n4}, and Lemma~\ref{l:Lq}, we have 
$\phi_k \to \chi_G$ in $L^q(\Omega)$ for any $q > 1$. Hence
	\begin{align} \label{limsup-term1}
		\lim_{k\to\infty} \int_\Omega \phi_k^2\;dx & =  \int_\Omega \chi_G^2\;dx =  |G|.
	\end{align}
Since $F_0[\chi_G]< \infty$, by \reff{def-F} with $G$ replacing $A$,  
the integral of $U$ over $\Omega \setminus G $ is finite. 
 Since $G = D\cap \Omega$ is open and $\partial D \cap \Omega $ is $C^2$, 
it follows from our assumptions on $U$, all points $x_i \in \Omega $ 
$  (1 \le i \le N)$ must be interior points of $G.$ 
Consequently, there exists $r_0 > 0$ and $N_0 \ge 1$ such that 
$B(r_0):= \cup_{i=1}^N B(x_i, r_0) \subseteq G_{k} \subseteq G$ for all $k \ge N_0.$ 
Hence, by \reff{phi_n2}, $\phi_k = 1$ on $B(r_0)$ for all $k \ge N_0.$ 
Note that $U$ is bounded on $\Omega\setminus B(r_0).$ Therefore, 
by \reff{phi_n2} and the convergence $\phi_k \to \chi_G$ in $L^2(\Omega)$,  
\begin{align}
\label{limsup-term3}
\lim_{k\to\infty} \int_\Omega (\phi_k-1)^2 U\;dx
&= \lim_{k\to\infty}  \int_{\Omega\setminus B(r_0)} (\phi_k-1)^2 U\;dx  \nn\\
& =   \int_{\Omega\setminus B(r_0)} (\chi_G -1)^2 U\;dx 
\nonumber \\
& =   \int_{\Omega \setminus G}  U\;dx.  
\end{align}
By Theorem~\ref{t:PBenergy}, 
\begin{equation}
\label{limsup-term4}
\lim_{k\to \infty} \min_{u \in \calA}  E_{\phi_k} [u] =  \min_{u \in \calA} E_{\chi_G}[u].
\end{equation}
Combining \qref{limsup-term2}--\qref{limsup-term4}, we obtain \qref{limsup-ineq}.

\smallskip 


{\it Step 2.} We now assume that $G\subseteq \Omega$ is an arbitrary measurable subset of finite
perimeter in $\Omega$.  Since $F_0[\chi_G] $ is finite, the integral
of $U$ over $\Omega \setminus G$ is finite. This implies that
$| G | > 0.$ If $|G| = |\Omega|$ then $P_\Omega(G) = 0.$ We can thus
choose $\phi_k = \chi_G$ to get the limsup inequality
\reff{limsup-ineq}. We assume now $0 < |G| < |\Omega|.$

Choose $\sigma_k \searrow 0$ 
such that the closure of $ B(\sigma_k) := \cup_{i=1}^N B(x_i,\sigma_k) $ is included in $\Omega,$
$U \ge 0$ on $B(\sigma_k)$, and $0 < |G \cup B(\sigma_k)| < |\Omega|$  for each $k \ge 1.$  
Denote $\widehat{G}_k = G \cup B(\sigma_k) $ for $k \ge 1$.  
Then $G \subseteq \widehat{G}_{k+1} \subseteq \widehat{G}_k$ for all $k \ge 1$ and $\chi_{\widehat{G}_k} 
\to \chi_G$ in $L^1(\Omega)$. We claim that 
\begin{equation}
\label{GkGclaim}
\limsup_{k\to \infty} F_0[\chi_{\widehat{G}_k}] \le F_0[\chi_G]. 
\end{equation}
Clearly, 
\begin{equation}
\label{GkG}
|\widehat{G}_k | = |G| + |B (\sigma_k) \setminus G|  \to |G|  \quad \mbox{as } k \to \infty. 
\end{equation}
Moreover \cite{Giusti84}, 
\begin{align}
\label{PGkPG}
\limsup_{k \to \infty} P_{\Omega} ( \widehat{G}_k) 
 &= \limsup_{k \to \infty} P_{\Omega} ( G \cup B_k)
\nn \\
& \le  \limsup_{k \to \infty} \left[ P_{\Omega}(G) + P_{\Omega} (B_k) \right]
\nonumber \\
 & =  P_{\Omega}(G) + \lim_{k\to \infty} P_{\Omega}  (B_k)
\nn \\
&  =  P_{\Omega}(G).
\end{align}
Since $\Omega \setminus \widehat{G}_k \subseteq \Omega \setminus \widehat{G}_{k+1}$, 
we have by the Lebesgue Monotone Convergence Theorem that  
\begin{align*}
\lim_{k\to \infty} \int_{\Omega \setminus \widehat{G}_k} \chi_{\{ x \in \Omega: U(x) > 0 \}} U \, dx 
& = \lim_{k\to \infty} \int_\Omega \chi_{\Omega \setminus \widehat{G}_k} 
    \chi_{\{ x \in \Omega: U(x) > 0 \}} U \, dx 
\\
& = \int_\Omega \chi_{\Omega \setminus G} \chi_{\{ x \in \Omega: U(x) > 0 \}} U \, dx 
\\
& =  \int_{\Omega \setminus G} \chi_{\{ x \in \Omega: U(x) > 0 \}} U \, dx. 
\end{align*}
Since $U$ is bounded below and $| \Omega \setminus \widehat{G}_k | \to | \Omega \setminus G|,$ 
\[
\lim_{k\to \infty} \int_{\Omega \setminus \widehat{G}_k} \chi_{\{ x \in \Omega: U(x) \le  0 \}} U \, dx 
= \int_{\Omega \setminus G} \chi_{\{ x \in \Omega: U(x) \le  0 \}} U \, dx. 
\]
Combining the above two equations, we get 
\begin{equation}
\label{GkGU}
\lim_{k \to \infty} \int_{\Omega \setminus \widehat{G}_k } U\, dx
= \int_{\Omega \setminus G} U\, dx. 
\end{equation}
By Theorem~\ref{t:PBenergy}, 
\begin{equation}
\label{GkGelec}
\lim_{k \to \infty} \min_{u \in \calA} E_{\chi_{\widehat{G}_k}}[u] =  \min_{u \in \calA} E_{\chi_G}[u].  
\end{equation}
Now, \reff{GkGclaim} follows from \reff{GkG}--\reff{GkGelec}. 

Fix an arbitrary $k \ge 1$. It follows from Lemma~\ref{l:setapprox} that 
there exist open sets $D_{k,j} \subseteq \R^3 $ $(j = 1, 2, \dots)$ such that, 
for each $j \ge 1$ and $ G_{k,j} :=  D_{k,j} \cap \Omega,$ $G_{k,j} \supseteq B(\sigma_k/2),$
 $\partial D_{k,j} $ is $C^\infty$ and $\partial D_{k,j} \cap \Omega $ is $C^2,$  
and $\calH^2(\partial D_{k,j} \cap  \partial \Omega) = 0,$ and that $|G_{k,j} \Delta \widehat{G}_k | \to 0$, 
which is equivalent to $\chi_{G_{k,j}} \to \chi_{\widehat{G}_k}$ in $L^1(\Omega)$,  and 
$P_\Omega(G_{k,j})\to P_\Omega (\widehat{G}_k)$ as $j \to \infty.$ 
Clearly, $|G_{k,j}| \to |\widehat{G}_k|$ as $j \to \infty.$ 
Since each $G_{k,j}\supseteq B(\sigma_k/2)$ and $\chi_{G_{k,j}} \to \chi_{\widehat{G}_k}$ in $L^1(\Omega)$,  
\[
\lim_{j\to \infty} \int_{\Omega\setminus G_{k,j}} U\, dx = \int_{\Omega\setminus \widehat{G}_{k}} U\, dx. 
\]
By Theorem~\ref{t:PBenergy}, $\min_{u \in \calA} E_{\chi_{G_{k,j}}}[u]
\to \min_{u \in \calA} E_{\chi_{\widehat{G}_k}} [u]$ as $j \to \infty.$  Therefore, 
\[
\lim_{j\to \infty} F_0[\chi_{G_{k,j}}] = F_0[\chi_{\widehat{G}_k}], \quad k = 1, 2, \dots
\]

By induction, we can choose $j_1 < j_2 < \cdots$ with $j_k \to \infty$
such that, with the notation $H_k =  G_{k, j_k}$ for all $k \ge 1$, 
\[
\| \chi_{ H_k} - \chi_{\widehat{G}_k} \|_{L^1(\Omega)} < \frac{1}{k} \quad \mbox{and} \quad 
|F_0[\chi_{H_k} ] - F_0[\chi_{\widehat{G}_k}] | < \frac{1}{k}, \quad k = 1, 2, \dots
\]
These, together with the fact that $\chi_{\widehat{G}_k} \to \chi_G$ 
in $L^1(\Omega)$ and \reff{GkGclaim}, imply that 
\begin{equation}
\label{Hk}
\lim_{k\to \infty} \| \chi_{ H_k} - \chi_{G} \|_{L^1(\Omega)} = 0
\quad \mbox{and} \quad 
\limsup_{k\to \infty} F_0[\chi_{H_{k} }] \le F_0[\chi_G]. 
\end{equation}
By Step 1, we can find for each $k \ge 1$ a recovering sequence $ \{ \phi_{k,l} \}_{l=1}^\infty$  
for $\chi_{H_k} $ such that all $\phi_{k,l} \in H^1(\Omega)$ $(l = 1, 2, \dots)$, 
\begin{equation}
\label{phikl}
\lim_{l\to \infty} \| \phi_{k,l} - \chi_{H_k}\|_{L^1(\Omega)} = 0
\quad \mbox{and} \quad 
\limsup_{l\to \infty} F_{\xi_l}  [\phi_{k,l} ] \le F_0[\chi_{H_k} ], 
\quad k = 1, 2, \dots
\end{equation}
By \reff{Hk} and \reff{phikl}, and induction, we 
can choose $l_1 < l_2 < \cdots $ with $l_k \to \infty$ such that 
$\phi_{k,l_k} \to \chi_{G}$ in $L^1(\Omega)$ and 
\[
\limsup_{k\to \infty} F_{\xi_{l_k} }  [\phi_{k,l_k} ] \le F_0[\chi_G ].  
\]
The proof is complete.
\end{proof}


\begin{proof}[Proof of Corollary~\ref{c:existenceF0}]
Let $\xi_k \searrow 0$. For each $k \ge 1$, let $\phi_k \in H^1(\Omega)$
be such that $F_{\xi_k}[\phi_k] = \min_{\phi \in L^1(\Omega)}F_{\xi_k}[\phi]$; cf.\
Lemma~\ref{l:minimizerFxi}. By Lemma~\ref{l:LowerBound} and 
comparing $F_{\xi_k}[\phi_k]$ to the free energy of
the constant function $\phi = 1$, the sequence $\{ F_{\xi_k}[\phi_k]\}$ is bounded.  
Hence the corresponding sequence of the van der Waals--Cahn--Hilliard 
functionals of $\phi_k$ is also bounded. This and a usual argument
\cite{Modica_ARMA87,Sternberg_ARMA88} imply that there exists a subsequence of
$\{ \phi_k \}$, not relabeled, such that 
$\phi_k \to \chi_G$ in $L^1(\Omega)$ for some measurable subset $G$ of $\Omega$. 
Theorem~\ref{t:EnergyConvergence} then implies $\chi_G$ minimizes $F_0$. 
\end{proof}

We need the following elementary result in the proof of Theorem~\ref{t:individual}: 

\begin{lemma}
\label{l:AkBk}
Let $a_k$ and  $b_k$ $(k = 1, 2, \dots)$, and $a$ and $b$ be all nonnegative numbers such that
\[
\lim_{k\to \infty} (a_k+b_k) = a + b, \quad \liminf_{k\to \infty} a_k \ge a, 
\quad \mbox{and} \quad \liminf_{k\to \infty} b_k \ge b. 
\]
Then
\[
\lim_{k\to \infty} a_k = a \quad \mbox{and} \quad \lim_{k\to \infty} b_k = b. 
\]
\end{lemma}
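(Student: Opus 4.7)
The plan is to exploit the fact that $\limsup$ of a sum of sequences is bounded above by the sum of $\limsup$'s, together with the elementary identity $\limsup(-x_k) = -\liminf x_k$, to bootstrap from the given $\liminf$ bounds to matching $\limsup$ bounds on each individual sequence.

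First I would write $b_k = (a_k+b_k) - a_k$ and take $\limsup$ on both sides. Since $\limsup$ is subadditive and since $\{a_k+b_k\}$ is a convergent sequence (so its $\limsup$ equals its limit $a+b$), this yields
\begin{equation*}
\limsup_{k\to\infty} b_k \;\le\; \lim_{k\to\infty}(a_k+b_k) + \limsup_{k\to\infty}(-a_k) \;=\; (a+b) - \liminf_{k\to\infty} a_k \;\le\; (a+b)-a \;=\; b.
\end{equation*}
Combined with the hypothesis $\liminf b_k \ge b$, this forces $\liminf b_k = \limsup b_k = b$, so $b_k \to b$. By symmetry, the identical argument with the roles of $a_k$ and $b_k$ swapped gives $a_k \to a$.

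There is no serious obstacle: the only thing to be careful about is invoking the basic inequalities $\limsup(x_k+y_k)\le \limsup x_k + \limsup y_k$ and $\limsup(-x_k) = -\liminf x_k$, which require at least one of the $\limsup$'s to be finite for the additivity to make sense without $\infty-\infty$ issues. This is fine here because $\{a_k+b_k\}$ converges to a finite number and all terms are nonnegative, so both $\{a_k\}$ and $\{b_k\}$ are bounded above by some constant for $k$ large, making every $\limsup$ and $\liminf$ involved a finite real number. No nonnegativity beyond this boundedness is actually used in the argument.
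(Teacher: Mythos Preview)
Your proof is correct. It differs from the paper's argument, which proceeds via subsequences: the paper observes that both $\{a_k\}$ and $\{b_k\}$ are bounded, then for an arbitrary subsequence $\{a_{k_j}\}$ it extracts a further subsequence realizing $\liminf_j a_{k_j}$, and uses the chain
\[
a+b = \liminf_{j}(a_{k_j}+b_{k_j}) \ge \liminf_j a_{k_j} + \liminf_j b_{k_j} \ge \liminf_k a_k + \liminf_k b_k \ge a+b
\]
to force $\liminf_j a_{k_j}=a$; since every subsequence thus has a further subsequence converging to $a$, the whole sequence converges. Your approach is more direct: by writing $b_k=(a_k+b_k)-a_k$ and using the subadditivity of $\limsup$ together with $\limsup(-a_k)=-\liminf a_k$, you get $\limsup b_k\le b$ in one line and finish by symmetry, avoiding any subsequence extraction. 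Both arguments rely on the same boundedness observation to keep all the quantities finite; your route is shorter and uses only the algebraic properties of $\liminf$ and $\limsup$, while the paper's route makes the compactness underpinning more explicit.
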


\begin{proof}
Since $a_k \ge 0$ and $b_k \ge 0$ $(k = 1, 2, \dots)$ and $\{ a_k + b_k \}$ converges,
both $\{ a_k \}$ and $\{ b_k \} $ are bounded. 
Let $\{ a_{k_j} \}$ be any subsequence of $\{ a_k \}.$ 
Let $\{ a_{{k_j}_i}\}$ be a further subsequence such that 
\begin{equation}
\label{kji}
\lim_{i \to \infty} a_{k_{j_i}} = \liminf_{j \to \infty} a_{k_j}. 
\end{equation}
We have then
\[
a + b = \liminf_{j\to \infty} (a_{k_j} + b_{k_j}) 
\ge \liminf_{j \to \infty} a_{k_j} + \liminf_{j \to \infty} b_{k_j} 
\ge \liminf_{k \to \infty} a_{k} + \liminf_{k \to \infty} b_{k} \ge a + b,  
\]
leading to 
\[
0 \ge \left(  \liminf_{j \to \infty}  a_{k_j} - a \right) 
  + \left( \liminf_{j \to \infty} b_{k_j} - b \right) \ge  0.  
\]
Each term in the sum is nonnegative, and hence is $0$. Thus $ \liminf_{j \to \infty}  a_{k_j} =  a.  $
This and \reff{kji} imply that $a_{k_{j_i}} \to a$ as $i \to \infty$, and hence
$a_k \to a$ as $k\to \infty$. Similarly, $b_k \to b$ as $k\to \infty$. 
\end{proof}

We are now ready to prove Theorem~\ref{t:individual}.  

\begin{proof}[Proof of Theorem~\ref{t:individual}]
Since $\{ F_{\xi_k}[\phi_k]\}$ converges, it is bounded. Lemma~\ref{l:LowerBound} then 
implies that $\sup_{k \ge 1}\|\phi_k \|_{L^4(\Omega)} < \infty.$ 
Since $\phi_k \to \chi_G$ a.e.\ in $\Omega$, Lemma~\ref{l:Lq} implies that 
$\phi_k \to \chi_G$ in $L^q(\Omega)$ for any $q\in [1, 4).$ 
Hence, \reff{volume} follows. Moreover, Theorem~\ref{t:PBenergy} implies \reff{Ele}. 

By our assumptions on $U$ and the Lebesgue Dominated Convergence Theorem, 
\begin{equation}
\label{Une0}
\lim_{k \to \infty} \int_{\{ x\in \Omega: U(x) \le 0 \}} (\phi_k - 1)^2 U \, dx 
=  \int_{\{ x\in \Omega: U(x) \le 0 \}} \chi_{\Omega \setminus G}  U \, dx.  
\end{equation}
Since $F_{\xi_k}[\phi_k] \to F_0[\chi_G]$ with $F_0[\chi_G]$ being finite,    
it follows from \reff{volume}, \reff{Ele}, and \reff{Une0} that  
\begin{align}
\label{ABge0}
&\lim_{k\to \infty} \left\{  \gamma_0 \int_\Omega \left[ \frac{\xi_k}{2} |\nabla \phi_k|^2 
+ \frac{1}{\xi_k} W(\phi_k) \right] dx
+ \rho_0 \int_{\{ x\in \Omega: U(x) > 0 \}} (\phi_k-1)^2 U\, dx \right\}
\nonumber 
\\
&\qquad 
= \lim_{k \to \infty} \left\{ F_{\xi_k}[\phi_k] - P_0 \int_\Omega \phi_k^2\, dx 
- \rho_0  \int_{\{ x\in \Omega: U(x) \le  0 \}} (\phi_k-1)^2 U\, dx
+ \min_{u \in \calA} E_{\phi_k}[u] \right\}
\nonumber 
\\
&\qquad 
= F_{0} [\chi_G]  - P_0 |G| - \rho_0  \int_{\{ x\in \Omega: U(x) \le  0 \}} 
\chi_{\Omega\setminus G} U\, dx + \min_{u \in \calA} E_{\chi_G} [u] 
\nonumber 
\\
&\qquad 
=  \gamma_0 P_\Omega(G) +  \rho_0 \int_{\{ x\in \Omega: U(x) > 0 \}} 
\chi_{\Omega \setminus G } U\, dx. 
\end{align}
By Lemma~\ref{l:etak}, we have 
\begin{equation}
\label{Age0}
\liminf_{k\to \infty} 
 \int_\Omega \left[ \frac{\xi_k}{2} |\nabla \phi_k|^2 
+ \frac{1}{\xi_k} W(\phi_k) \right] dx \ge P_\Omega(G).   
\end{equation}
Fatou's Lemma implies that 
\begin{equation}
\label{Bge0}
\liminf_{k \to \infty} \int_{\{ x\in \Omega: U(x) > 0 \}} (\phi_k - 1)^2 U \, dx 
\ge \int_{\{ x\in \Omega: U(x) > 0 \}} \chi_{\Omega\setminus G}  U \, dx.  
\end{equation}
By \reff{ABge0}--\reff{Bge0} and Lemma~\ref{l:AkBk}, 
the inequalities \reff{Age0} and \reff{Bge0} become equalities. 
Therefore \reff{surface} is true; and further, \reff{LJ} is true. 

Finally, since all $F_0[\chi_G]$, $|G|$, $P_\Omega(G)$, and $F_{\rm ele}[G]$ are finite, 
the right-hand side of \reff{LJ} is also finite. 
\end{proof}

\section{Force Convergence}
\label{s:ForceConvergenceSolvation}


We first prove Theorem~\ref{t:ForceConvSolvation}. We then focus on the
proof of Theorem~\ref{th:CH-force-conv}, which is for a general space dimension $n \ge 2.$ 

\begin{proof}[Proof of Theorem~\ref{t:ForceConvSolvation}]
Since $F_{\xi_k}[\phi_k]\to F_0[\chi_G]$, Lemma~\ref{l:LowerBound} implies that 
$ \{ \| \phi_k \|_{L^4(\Omega)} \}$ is bounded. Since, $\phi_k \to \chi_G$ a.e.\ 
in $\Omega$, Lemma~\ref{l:Lq} then implies that $\phi_k \to \chi_G$ in $L^q(\Omega)$
for any $q \in [1, 4).$ This implies \reff{f_vol-conv}; it also implies 
\reff{f_vdW-conv} as both $U$ and $\nabla U$ are continuous on $\mbox{supp}\,(V).$
The second equation \reff{CH-force-conv} is part of Theorem~\ref{th:CH-force-conv}.  
Finally, the equation \reff{f_ele-conv} is part of Theorem~\ref{th:f_ele-conv}. 
\end{proof}

To prove Theorem~\ref{th:CH-force-conv}, we need the following lemma which states that the 
convergence of phase-field surface energies to their sharp-interface limit 
implies the asymptotic equi-partition of energies.  Indeed, we prove that 
\begin{equation*}
 	\frac{\xi_k}{2} |\nabla \phi_k|^2 -  \frac{1}{\xi_k} W(\phi_k) 
\to 0 \quad\mbox{strongly in }L^1(\Omega) 	\mbox{ as }k\to \infty. 
\end{equation*}
This is stronger than the weak convergence of the discrepancy measures 
\[
\left[ \frac{\xi_k}{2} |\nabla \phi_k|^2 -  \frac{1}{\xi_k} W(\phi_k)\right] dx
\quad (k = 1, 2, \dots) 
\]
that are defined in \cite{RogerSchatzle06,Ilmanen1993}:   

\begin{lemma}[Asymptotic equi-partition of energy]
\label{l:equienergy}
Let $\xi_k \searrow 0,$ $\phi_k\in H^1(\Omega)$ $(k = 1, 2, \dots),$ 
and $G\subseteq \Omega$ be measurable with $P_\Omega(G) < \infty.$ 
Assume that $\phi_k \to \chi_G$ a.e.\ in $\Omega.$ Assume also that 
\begin{equation}
\label{POG}
\lim_{k\to \infty} \int_\Omega 
\left[ \frac{\xi_k}{2} |\nabla \phi_k|^2 + \frac{1}{\xi_k} W(\phi_k) \right] dx = P_\Omega(G).   
\end{equation}
Then, we have 
\begin{equation}
\label{equienergy-0}
\lim_{k \to \infty} \int_\Omega \left| \sqrt{ \frac{\xi_k}{2} } |\nabla \phi_k| 
- \sqrt{ \frac{W(\phi_k)}{\xi_k}} \right|^2 dx = 0,  
\end{equation}
and
\begin{equation} 
\label{equienergy}
 \lim_{k\to \infty} \int_\Omega \left|  \frac{\xi_k}{2} |\nabla \phi_k|^2 
-  \frac{1}{\xi_k} W(\phi_k) \right| dx = 0. 	
\end{equation}
\end{lemma}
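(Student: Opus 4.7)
The plan is to exploit the algebraic identity
\[
\frac{\xi_k}{2}|\nabla\phi_k|^2 + \frac{W(\phi_k)}{\xi_k}
= \left( \sqrt{\tfrac{\xi_k}{2}}\,|\nabla\phi_k| - \sqrt{\tfrac{W(\phi_k)}{\xi_k}} \right)^2
+ \sqrt{2W(\phi_k)}\,|\nabla\phi_k|,
\]
which relates the phase-field energy density to the squared defect from equi-partition plus the ``co-area'' integrand that is already controlled from below by the perimeter.

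First I would define $\eta_k(x)=\int_0^{\phi_k(x)}\sqrt{2W(t)}\,dt$ as in Lemma~\ref{l:etak}, so that $|\nabla\eta_k|=\sqrt{2W(\phi_k)}\,|\nabla\phi_k|$ a.e.\ in $\Omega$. The hypothesis \reff{POG} in particular gives \reff{supsupsup}, so Lemma~\ref{l:etak} applies and yields
\[
\liminf_{k\to\infty}\int_\Omega \sqrt{2W(\phi_k)}\,|\nabla\phi_k|\,dx
= \liminf_{k\to\infty}\int_\Omega |\nabla\eta_k|\,dx \ge P_\Omega(G).
\]
Integrating the identity above over $\Omega$ and using \reff{POG}, I would conclude
\[
\limsup_{k\to\infty}\int_\Omega\left(\sqrt{\tfrac{\xi_k}{2}}\,|\nabla\phi_k| - \sqrt{\tfrac{W(\phi_k)}{\xi_k}}\right)^2 dx
\le P_\Omega(G) - \liminf_{k\to\infty}\int_\Omega \sqrt{2W(\phi_k)}\,|\nabla\phi_k|\,dx \le 0,
\]
which proves \reff{equienergy-0}.

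For \reff{equienergy}, I would factor the difference as
\[
\frac{\xi_k}{2}|\nabla\phi_k|^2 - \frac{W(\phi_k)}{\xi_k}
= \left(\sqrt{\tfrac{\xi_k}{2}}\,|\nabla\phi_k| - \sqrt{\tfrac{W(\phi_k)}{\xi_k}}\right)
  \left(\sqrt{\tfrac{\xi_k}{2}}\,|\nabla\phi_k| + \sqrt{\tfrac{W(\phi_k)}{\xi_k}}\right),
\]
and apply the Cauchy--Schwarz inequality. The first factor has $L^2$-norm tending to zero by \reff{equienergy-0}, while the second factor is uniformly bounded in $L^2(\Omega)$ because
\[
\int_\Omega\left(\sqrt{\tfrac{\xi_k}{2}}\,|\nabla\phi_k| + \sqrt{\tfrac{W(\phi_k)}{\xi_k}}\right)^2 dx
\le 2\int_\Omega\left[\frac{\xi_k}{2}|\nabla\phi_k|^2 + \frac{W(\phi_k)}{\xi_k}\right]dx \to 2P_\Omega(G).
\]
Multiplying these two bounds yields \reff{equienergy}. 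No step is genuinely delicate here; the only conceptual content is recognizing that the lower-semicontinuity bound $\int|\nabla\eta_k|\ge P_\Omega(G)$ from Lemma~\ref{l:etak} combined with the energy-convergence hypothesis forces the squared equi-partition defect to vanish, after which \reff{equienergy} is a routine Cauchy--Schwarz consequence.
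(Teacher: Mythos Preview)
Your proof is correct and follows essentially the same route as the paper's: expand the squared defect, invoke Lemma~\ref{l:etak} for the lower bound $\liminf_k\int_\Omega|\nabla\eta_k|\,dx\ge P_\Omega(G)$, and combine with \reff{POG} to force \reff{equienergy-0}; then factor the difference and apply Cauchy--Schwarz with the uniform $L^2$ bound on the sum to obtain \reff{equienergy}. The arguments are identical in structure and detail.
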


\begin{proof}
Define $\eta_k = \eta_k(x)$ as in Lemma~\ref{l:etak}. 
We have by Lemma~\ref{l:etak} and \reff{POG} that 
\begin{align*}
0 & \le
 \limsup_{k \to \infty} \int_\Omega 
\left| \sqrt{ \frac{\xi_k}{2} } |\nabla \phi_k| - \sqrt{ \frac{W(\phi_k)}{\xi_k}} \right|^2 dx
\\
& =  
 \limsup_{k \to \infty} \int_\Omega \left[
\frac{\xi_k}{2}  |\nabla \phi_k|^2 + \frac{1}{\xi_k} W(\phi_k) 
 -  \sqrt{ 2 W(\phi_k) } |\nabla \phi_k | \right]    dx
\\
& =  \lim_{k \to \infty} \int_\Omega \left[ 
\frac{\xi_k}{2}  |\nabla \phi_k|^2 + \frac{1}{\xi_k} W(\phi_k) \right] 
- \liminf_{k \to \infty} \int_\Omega  \sqrt{ 2 W(\phi_k ) } |\nabla \phi_k |\,  dx 
\\
& = P_\Omega(G) - \liminf_{k \to \infty} \int_\Omega  |\nabla \eta_k |\, dx 
\\
& \le 0. 
\end{align*}
This proves \reff{equienergy-0}. 
By \reff{POG} and \reff{equienergy-0}, we have 
\begin{align*}
& 
\int_\Omega \left|  \frac{\xi_k}{2} |\nabla \phi_k|^2 -  \frac{1}{\xi_k} W(\phi_k) \right| dx 
\\
&\qquad 
= \int_\Omega 
\left| \sqrt{ \frac{\xi_k}{2} } |\nabla \phi_k| - \sqrt{ \frac{W(\phi_k)}{\xi_k}} \right| 
\, 
\left| \sqrt{ \frac{\xi_k}{2} } |\nabla \phi_k| + \sqrt{ \frac{W(\phi_k)}{\xi_k}} \right|\, dx
\\
&\qquad 
\le \left( \int_{\Omega} \left| \sqrt{ \frac{\xi_k}{2} } |\nabla \phi_k| 
- \sqrt{ \frac{W(\phi_k)}{\xi_k}} \right|^2 dx \right)^{1/2}   \left( 2 \int_\Omega 
\left[ \frac{\xi_k}{2} |\nabla \phi_k|^2 + \frac{1}{\xi_k} W(\phi_k) \right] dx \right)^{1/2}   
\\
&\qquad 
\to 0 \qquad \mbox{as } k \to \infty,    
\end{align*}
implying \reff{equienergy}. 
\end{proof}

We are now ready to prove Theorem~\ref{th:CH-force-conv}.  

\begin{proof}[Proof of Theorem~\ref{th:CH-force-conv}.]
Suppose \reff{CH-force-conv1} is true for any $\Psi \in C_c(\Omega, \R^n \times \R^n).$ 
Let $V \in C_c^1(\Omega, \R^n).$ 
Under the additional assumptions on $\phi_k$ $(k \ge 1)$ and $G$, we have 
by \reff{weak-f_sur} in Lemma~\ref{l:Stress}, 
\reff{CH-force-conv1} with $\Psi = \nabla V$, and \reff{weak-f0_sur} 
in Lemma~\ref{l:sharpboundaryforce} that 
\begin{align*}
& \lim_{k\to\infty}\int_\Omega\left[  -\xi_k\Delta\phi_k 
+ \frac1{\xi_k} W'(\phi_k)\right] \nabla\phi_k \cdot V\;dx
\\
& \qquad =  - \lim_{k\to\infty}  \int_\Omega T_{\xi_k}(\phi_k) : \nabla V\, dx 
\\
&\qquad 
= - \int_{\partial G} (I - \nu \otimes \nu ):  \nabla V \, d \calH^{n-1}    
\\
&\qquad 
= - (n-1)\int_{\partial G} H\nu\cdot V \, d S, 
\end{align*}
proving \reff{CH-force-conv2}. 

We now  prove \reff{CH-force-conv1}. We claim that it suffices to prove that 
\begin{align}
\label{tensor-conv2}
\lim_{k\to \infty} \int_\Omega \xi_k\nabla\phi_k\otimes\nabla\phi_k:\Psi\;dx 
= \int_{\partial^*G}\nu\otimes\nu:\Psi\;d\cH^{n-1}
\qquad \forall \Psi\in C_c(\Omega;\R^{n\times n}). 
\end{align}
In fact, suppose \reff{tensor-conv2} is proved. Notice for any $a \in \R^n$, 
$|a|^2 = a\otimes a : I$. Let $\Psi \in C_c(\Omega, \R^{n\times n})$.  
Then $(I:\Psi) I\in C_c(\Omega, \R^{n\times n})$.  
Hence, it follows from Lemma~\ref{l:equienergy} and 
\reff{tensor-conv2}, with $(I:\Psi)I $ replacing $\Psi$, that 
\begin{align*}
&\lim_{k\to\infty}\int_\Omega \left[ \frac{\xi_k}{2}|\nabla\phi_k|^2 
         + \frac1{\xi_k}W(\phi_k)\right] I : \Psi\, dx 
\\
& \qquad = \lim_{k\to\infty}\int_\Omega  \xi_k|\nabla\phi_k|^2 I: \Psi\,  dx
\\
& \qquad = \lim_{k\to\infty}\int_\Omega  \xi_k \nabla \phi_k \otimes \nabla \phi_k  : (I: \Psi) I  \, dx
\\
&\qquad  =  \int_{\partial^*G}\nu\otimes\nu : (I:\Psi) I \, d\cH^{n-1}
\\
&\qquad 
=  \int_{\partial^*G}I:\Psi \, d\cH^{n-1}. 
\end{align*}
This, togehter with \reff{tensor-conv2}, implies \reff{CH-force-conv1}. 

It remains to prove \reff{tensor-conv2}. 
Fix $\Psi \in C_c(\Omega, \R^n \times \R^n)$ and let $\sigma > 0.$ 
Recall that 
the reduced boundary $\partial^*G$ has the decomposition 
\cite{Giusti84,Ziemer_Book89,EvansGariepy_Book92}
\[
\partial^*G = \biggl( \bigcup_{j=1}^\infty K_j \biggr) \bigcup Q, 
\]
where $K_j$ $(j = 1, 2, \dots)$ are disjoint compact sets, each being
a subset of a $C^1$-hypersurface $S_j \subset \Omega$, 
and $Q\subset \partial G$ with $\| \partial G\|(Q) = 0.$ 
The vector $\nu(x) $ at some $x \in K_j$ for some $j$ is the normal to $S_j$. 
Moreover,
\begin{equation}
\label{GK}
\sum_{j=1}^\infty \calH^{n-1}(K_j) = \calH^{n-1}(\partial^* G) = 
\|\partial G \|(\Omega) = P_\Omega(G) < \infty. 
\end{equation}
Let  $J $ be large enough so that
\begin{equation}
\label{Jsigma}
\sum_{j=J+1}^\infty \calH^{n-1}(K_j) < \sigma. 
\end{equation}
Since $K_j$ $(j = 1, \dots, J)$ are disjoint, there exist disjoint open 
sets $U_j \subset \overline{U}_j \subset \Omega$
such that $K_j \subset U_j$ $(j = 1, \dots, J).$  
For each $j$ $(1 \le j \le J)$, we define $d_j: U_j \to \R$ to be 
the signed distance to $S_j$ for which the sign is chosen
so that $\nu (x) = \nabla d_j(x)$ if $x \in K_j;$ 
and extend $d_j $ to $\Omega$ by setting $d_j = 0$ on $\Omega \setminus U_j.$ 
We also choose $\zeta_j \in C_c^1(\Omega)$ be such that 
$0 \le \zeta_j \le 1$ on $\Omega$, $\zeta_j = 1$ in a neighborhood of $K_j$, 
$\mbox{supp}\, (\zeta_j) \subset U_j$, and $ \zeta_j  \nabla d_j  \in C_c(\Omega, \R^n)$. 
Define $\nu_J: \Omega \to \R^n$ by
\[
\nu_J = \sum_{j=1}^J \zeta_j  \nabla d_j. 
\]
Note that $\nu_j \in C_c (\Omega, \R^n)$, $|\nu_j| \le 1$ on $\Omega$, 
and $\nu_j = \nu$ on each $K_j$ $(1 \le j \le J).$ 

We rewrite $ \xi_k\nabla\phi_k\otimes\nabla\phi_k $ as 
\begin{align}
\label{decomposition}
\xi_k\nabla\phi_k\otimes\nabla\phi_k = & \left( \sqrt{\xi_k}\nabla\phi_k 
+\sqrt{\xi_k} | \nabla \phi_k | \nu_J \right)\otimes\sqrt{\xi_k}\nabla\phi_k \nn\\
&\quad + \left( \sqrt{\frac{2W(\phi_k)}{\xi_k}} - \sqrt{\xi_k}|\nabla\phi_k|\right)
\nu_J\otimes\sqrt{\xi_k}\nabla\phi_k 
\nn\\
&\quad  - \nu_J\otimes\sqrt{2W(\phi_k)}\nabla\phi_k.
\end{align}
We claim: 
\begin{enumerate}
	\item[(1)]  $\disp\limsup_{k\to\infty}\int_\Omega\left|\sqrt{\xi_k}\nabla\phi_k 
		+ \sqrt{\xi_k} | \nabla \phi_k | \nu_J \right|^2 dx \le 4\sigma;$
        \item[(2)] $\disp \sup_{k\ge1}
\left\| \sqrt{\xi_k}\nabla\phi_k \right\|_{L^2(\Omega)} < \infty; $ 
	\item[(3)]  $\disp\lim_{k\to\infty}\int_\Omega \left[ \sqrt{\xi_k}|\nabla\phi_k| 
		-\sqrt{\frac{2W(\phi_k)}{\xi_k}}\,\right]^2dx =0;$
	\item[(4)] 
$\disp \lim_{k\to \infty} \int_\Omega \nu_J\otimes
              \sqrt{2W(\phi_k)}\nabla\phi_k:\Psi\;dx 
= - \int_{\partial^*G}  \nu_J \otimes \nu: \Psi\; d\calH^{n-1}.  $ 
\end{enumerate}
If all these claims are true, then it follows from \reff{decomposition} and \reff{Jsigma} that 
\begin{align*}
	&\limsup_{k\to\infty} \left|  \int_\Omega \xi_k\nabla\phi_k\otimes\nabla\phi_k :\Psi\;dx 
		- \int_{\partial^*G}\nu\otimes\nu:\Psi\;d\cH^{n-1} \right| 
	\\
	&\qquad \le \limsup_{k\to\infty}
\int_\Omega \left| \left( \sqrt{\xi_k}\nabla\phi_k 
	+ \sqrt{\xi_k} | \nabla \phi_k | \nu_J \right)\otimes\sqrt{\xi_k}
     \nabla\phi_k:\Psi \right| \, dx \nn\\
	&\qquad \quad + \limsup_{k\to\infty} \int_\Omega\left|\left(  \sqrt{\frac{2W(\phi_k)}{\xi_k}} 
		- \sqrt{\xi_k}|\nabla\phi_k|\right)\nu_J\otimes\sqrt{\xi_k}
		\nabla\phi_k : \Psi \right| \;dx \nn\\
	&\qquad\quad + \left| \lim_{k\to\infty}\int_\Omega\nu_J\otimes
              \sqrt{2W(\phi_k)}\nabla\phi_k:\Psi\;dx 
		+\int_{\partial^*G}\nu\otimes\nu:\Psi\;d\cH^{n-1} \right| \\
	& \qquad \le \limsup_{k\to \infty } \left[ \int_\Omega  \left| \sqrt{\xi_k}\nabla\phi_k 
	+ \sqrt{\xi_k} | \nabla \phi_k | \nu_J \right|^2  dx \right]^{1/2}
    \left(\sup_{k\ge 1}\left\| \sqrt{\xi_k}\nabla\phi_k 
          \right\|_{L^2(\Omega)}\right) \|\Psi\|_{L^\infty(\Omega)}  
\\
& \qquad \quad + \limsup_{k\to\infty}\left[ \int_\Omega \left(\sqrt{\xi_k}|\nabla\phi_k| 
		-\sqrt{\frac{2W(\phi_k)}{\xi_k}}\right)^2dx\right]^{1/2}
\\
& \qquad \qquad \cdot
	\left( \sup_{k\ge1}\|\sqrt{\xi_k}\nabla\phi_k\|_{L^2(\Omega)} \right) 
           \|  \Psi  \|_{L^\infty(\Omega)}
\\
& \qquad \qquad + \left|\int_{\partial^*G} ( \nu_J -\nu)\otimes\nu : \Psi \;d\cH^{n-1} \right| 
 \\
	& \qquad \le \sqrt{4\sigma}\left(\sup_{k\ge 1}\left\| \sqrt{\xi_k}\nabla\phi_k 
           \right\|_{L^2(\Omega)}\right) \|\Psi\|_{L^\infty(\Omega)} 
		+ 2 \|\Psi\|_{L^\infty(\Omega)} \sum_{j=J+1}^\infty \cH^{n-1}(K_j) \\
	& \qquad \le \sqrt{4\sigma}\left(\sup_{k\ge 1}\left\| \sqrt{\xi_k}\nabla\phi_k 
           \right\|_{L^2(\Omega)}\right) \|\Psi\|_{L^\infty(\Omega)}  
		+ 2\sigma  \|\Psi\|_{L^\infty(\Omega)}. 
\end{align*}
Since $\sigma > 0$ is arbitrary, this proves \qref{tensor-conv2}.

We now prove all of our claims. 
Claim (2) follows from the assumption \reff{important} of the energy convergence 
and the assumption that $P_\Omega(G) < \infty.$ 
Claim (3) is \qref{equienergy-0} in Lemma~\ref{l:equienergy}. 
Claim (4) follows from \reff{etakweakconv} in Lemma~\ref{l:etak}, 
which implies that for any $j \in \{ 1, \dots, n \}$ 
\[
\lim_{k\to \infty} \int_\Omega \partial_{x_j} \eta_k h \, dx
= -\int_{\partial^* G} \nu_j h \, d\calH^{n-1} \qquad \forall h \in C_c(\Omega),  
\]
where $\nabla \eta_k = \sqrt{2 W(\phi_k)} \nabla \phi_k$.  


Proof of Claim (1).
Noting that $|\nu_J| \le 1$, we have  for each $k \ge 1$ that 
\begin{align}
\label{Claim1}
&\frac12 \int_\Omega\left|\sqrt{\xi_k}\nabla\phi_k + \sqrt{\xi_k} 
| \nabla \phi_k | \nu_J \right|^2 dx 
\nn\\
&\qquad 	
= \frac12 \int_\Omega \left(\xi_k |\nabla\phi_k|^2  + \xi_k |\nabla\phi_k|^2 | \nu_J|^2 
		+2\xi_k|\nabla\phi_k|\nabla\phi_k
		\cdot \nu_J\right)dx 
\nn\\
&\qquad 
\le  \int_\Omega\left(\xi_k |\nabla\phi_k|^2 + \xi_k|\nabla\phi_k|\nabla\phi_k\cdot \nu_J \right)dx 
\nn\\
&\qquad 	
= \int_\Omega \left[ \frac{\xi_k}{2}|\nabla\phi_k|^2 + \frac{W(\phi_k)}{\xi_k}\right]  dx 
	+  \int_\Omega  \left[ \frac{\xi_k}{2}|\nabla\phi_k|^2 - \frac{W(\phi_k)}{\xi_k}\right] dx 
\nn\\
&\qquad \quad 
+ \int_\Omega \left[  \sqrt{\xi_k}|\nabla\phi_k|-\sqrt{\frac{2W(\phi_k)}{\xi_k}}\right] 
\sqrt{\xi_k}\nabla\phi_k \cdot\nu_J\;dx 
\nn\\
&\qquad \quad 
 +  \int_\Omega \sqrt{2W(\phi_k)}\nabla\phi_k\cdot\nu_J\;dx
\nn \\
&\qquad
	=: I_1(k) +I_2 (k) +I_3(k)  + I_4(k).	
\end{align}
By \qref{POG},
\[
\lim_{k\to \infty} I_1(k) = P_\Omega(G). 
\]
By Lemma~\ref{l:equienergy} on the asymptotic equi-partition of energy, 
\[
\lim_{k\to \infty} I_2(k) = 0. 
\]
By Claim (2) and Claim (3), 
\[
\lim_{k\to \infty} I_3(k) = 0. 
\]
By \reff{etakweakconv} in Lemma~\ref{l:etak}, 
\begin{align*}
\lim_{k\to\infty} I_4 &= - \int_{\partial^* G} \nu\cdot \nu_J\;d\cH^{n-1} 
\\
&=  - \sum_{j=1}^J\cH^{n-1}(K_j) -  \sum_{j=J+1}^\infty\int_{K_j}\nu\cdot\nu_J\;d\cH^{n-1}.
\end{align*}
Therefore, continuing from \reff{Claim1}, we have by \reff{GK}, \reff{Jsigma}, 
and the fact that $|\nu\cdot \nu_J|\le 1$ that 
\begin{align*}
&\limsup_{k\to \infty}  
\frac12 \int_\Omega\left|\sqrt{\xi_k}\nabla\phi_k + \sqrt{\xi_k} 
| \nabla \phi_k | \nu_J \right|^2 dx 
\\
&\qquad 
\le P_\Omega(G)
- \sum_{j=1}^J\cH^{n-1}(K_j) -  \sum_{j=J+1}^\infty\int_{K_j}\nu\cdot \nu_J\;d\cH^{n-1}
\\
&\qquad 
=   \sum_{j=J+1}^\infty \cH^{n-1}(K_j) -\sum_{j=J+1}^\infty\int_{K_j}
\nu\cdot\nu_J\,d\cH^{n-1}
\nn\\
&\qquad \leq 2\sum_{j=J+1}^\infty \cH^{n-1}(K_j) \nn \\
&\qquad \leq 2\sigma,  
\end{align*}
proving Claim (1). 
The proof is complete. 
\end{proof}

}

\medskip

\noindent{\bf Acknowledgments.}
This work was supported by the US National Science Foundation (NSF)
through the grant DMS-1411438 (S.D.), DMS-1319731 (B.L.), and
DMS-1454939 (J.L.).

\medskip

\bibliography{GammaSolvation}
\bibliographystyle{plain}
\end{document}